\newcommand{\showcomments}{yes}
\newsavebox{\commentbox}
\newtheorem{thm}{Theorem}[section]
\newtheorem{theorem}[thm]{Teorema}
\newtheorem{corollary}[thm]{Corolario}
\newtheorem{lemma}[thm]{Lema}
\newtheorem{proposition}[thm]{Proposici\'on}
\theoremstyle{definition}
\newtheorem{definition}[thm]{Definici\'on}
\theoremstyle{remark}
\newtheorem{notation}[thm]{Notaci\'on}
\newtheorem{remark}[thm]{Observaci\'on}
\newtheorem{example}[thm]{Ejemplo}
\newcommand{\dist}{\textup{\textsf{d}}}
\newcommand{\field}[1]{\mathbb{#1}}
\newcommand{\integers}{\ensuremath{\field{Z}}}
\newcommand{\naturals}{\ensuremath{\field{N}}}
\newcommand{\reals}{\ensuremath{\field{R}}}
\title{Grupos y geometr\'ia cubular}
\author{Macarena Arenas}
\address{DPMMS, Centre for Mathematical Sciences, Wilberforce Road, Cambridge, CB3 0WB, UK
 and Clare College, University of Cambridge, Cambridge, CB2 1TL, UK}
	\email{mcr59@cam.ac.uk}
	\subjclass[2010]{20F06, 20F67, 20F65}
\keywords{Cube Complexes, Hyperbolic Groups, Separability}
\date{\today}
\begin{document}

\begin{abstract}
Este art\'iculo expositorio es una versi\'on extendida del material presentado en un minicurso de la \emph{Escuela de teor\'ia geom\'etrica de grupos} que se llev\'o a cabo  en el Centro de Ciencias Matem\'aticas en Morelia, en julio de 2022. El objetivo principal es dar una introducci\'on al estudio de los \emph{complejos cubulares no positivamente curvados}, y a trav\'es de estos, una introducci\'on a la teor\'ia geom\'etrica de grupos.
\end{abstract}

\maketitle

\tableofcontents

\section{¿De qu\'e trata este texto?}

Los complejos cubulares no positivamente curvados fueron introducidos originalmente por Gromov~\cite{Gromov87}, quien los exhibi\'o como ejemplos de grupos CAT(0) y de grupos hiperb\'olicos que se pueden entender de manera b\'asicamente combinatoria.
En las \'ultimas d\'ecadas, el estudio de estos objetos ha cobrado importancia en la teor\'ia geom\'etrica de grupos y en la topolog\'ia de dimensiones bajas, alcanzando un m\'aximo (local) con las demostraciones de  dos de los problemas abiertos m\'as importantes en el estudio de 3-variedades:  la \emph{conjetura virtual de Haken} y la \emph{conjetura de fibraci\'on virtual}, demostradas por Agol en~\cite{AgolGrovesManning2012, Agol08} partiendo del trabajo de Wise~\cite{WiseIsraelHierarchy} y sus colaboradores~\cite{BergeronWiseBoundary, HaglundWiseAmalgams, HsuWiseCubulatingMalnormal}.

 En este texto se ha buscado cubrir los temas necesarios para que el lector pueda absorber una narrativa coherente sobre el desarrollo y las aplicaciones de esta teor\'ia, pero no se ha buscado ser enciclop\'edicos: tratamos de evitar las decenas de tangentes posibles, los resultados m\'as especializados, y los temas m\'as `contempor\'aneos'; cuando las demostraciones de los resultados enunciados en el texto son demasiado t\'ecnicas o requieren salirse un poco del tema, las omitimos. Inclu\'imos muchos ejemplos y sugerencias de ejercicios, de manera que el texto podr\'ia servir como material para un curso corto en el tema.

Un primer curso en teor\'ia de grupos y un  curso en topolog\'ia de conjuntos son requisitos absolutamente indispensables para entender este texto, y aunque un lector excepcionalmente motivado podr\'a sacarle provecho a una buena parte del material sin otros conocimientos m\'as avanzados, es esencial para la comprensi\'on de la totalidad del material tener tambi\'en conocimientos b\'asicos de topolog\'ia algebraica (grupo fundamental,  espacios cubrientes, topolog\'ia de superficies). Otros temas deseables, pero no necesarios son un conocimiento b\'asico de geometr\'ia hiperb\'olica, el concepto de CW-complejo, y, si no un conocimiento formal de la teor\'ia, al menos una cultura general respecto al estudio de las 3-variedades.

Todo el material cubierto en este art\'iculo se puede encontrar en otras fuentes, aunque quiz\'a no presentado de esta manera y ciertamente no en espa\~nol. Algunas sugerencias de lectura incluyen~\cite{Sageev95, ChatterjiNiblo04, GGTbook14, WiseCBMS2012}.

\section{Nociones b\'asicas}

\begin{definition}
Un \emph{n-cubo} es una copia de $[0,1]^n$, por ejemplo, un $0$-cubo es un v\'ertice, un $1$-cubo es una arista (un intervalo), un $2$-cubo es un cuadrado y un $3$-cubo es lo que   simplemente llamariamos un cubo. 

\begin{figure}[h!]
\centerline{\includegraphics[scale=0.6]{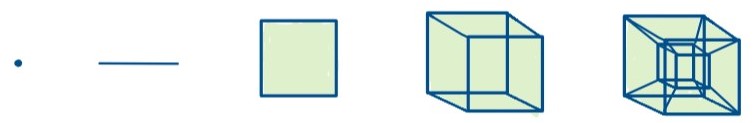}}
\caption{De izquierda a derecha: 0-cubo, 1-cubo, 2-cubo, 3-cubo, 4-cubo.}
\label{fig:0to4}
\end{figure} 

Si $c$ es un $n$-cubo, las \emph{caras} de $c$ son los subcubos que se obtienen al restringir una o varias coordenadas de $c$ a $\{0\}$ o a $\{1\}$.

\begin{figure}[h!]
\centerline{\includegraphics[scale=0.27]{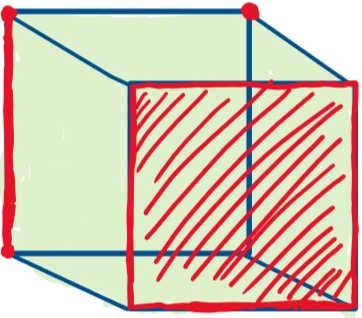}}
\caption{0-,1-, y 2-caras en un 3-cubo.}
\label{fig:caras}
\end{figure} 

Un \emph{complejo cubular} es un complejo celular cuyas celdas son cubos de varias dimensiones pegados a lo largo de sus caras por isometr\'ias. Como espacios topol\'ogicos, los complejos cubulares est\'an equipados con la topolog\'ia cociente. M\'as precisamente, un complejo cubular es el espacio cociente $\mathcal{C}/\mathcal{I}$ donde $\mathcal{C}$ es una uni\'on disjunta de cubos e $\mathcal{I}$ es una colecci\'on de isometr\'ias entre caras de cubos en $\mathcal{C}$ (aqu\'i asumimos adem\'as que las caras solo se pegan a caras de la misma dimensi\'on). 

Escribir las isometr\'ias de manera expl\'icita en general ser\'ia un engorro, as\'i que, en los ejemplos, adoptaremos la convenci\'on de describir las identificaciones entre las distintas caras etiquet\'andolas con letras, colores o flechas.

La \emph{dimensi\'on} de un complejo cubular es  el supremo de las dimensiones de sus cubos; decimos que un complejo cubular es \emph{finito} si tiene un n\'umero finito de cubos, y que es \emph{localmente finito} si ninguno de sus puntos est\'a contenido en una cantidad infinita de cubos.
\end{definition}

\begin{figure}[h!]
\centerline{\includegraphics[scale=0.6]{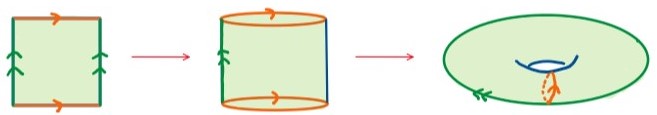}}
\caption{Un toro es un complejo cubular.}
\label{fig:toro}
\end{figure} 

\begin{definition} La \emph{aureola} de un v\'ertice $v$ en un complejo cubular $X$, denotada $link(v)$ es el complejo celular cuyas celdas son los $n$-simplejos correspondientes a las esquinas de los $(n+1)$-cubos adjacentes a $v$.
\end{definition}

Intuitivamente, $link(v)$ es la $\epsilon$-esfera alrededor de $v$, donde $\epsilon>0$ es suficientemente peque\~no. Esto se puede formalizar como sigue. Si $X$ es un complejo cubular, cada $n$-cubo $c$ de $X$ es isom\'etrico a $[0,1]^n$, y por lo tanto tiene sentido hablar de la esfera de radio $\epsilon$ (donde, digamos, $\epsilon < \frac{1}{2}$) con centro un v\'ertice $v$: esta es la aureola de $v$ en $c$. La aureola de $v$ en $X$ se puede obtener ahora tomando la aureola de $v$ en cada cubo de $X$ que contiene a $v$, y pegando estas aureolas a lo largo de sus caras de acuerdo  a las identificaciones inducidas por los cubos correspondientes en $X$.

\begin{definition}
Una \emph{gr\'afica simplicial} es una gr\'afica que no tiene b\'igonos (ciclos de longitud 2) ni lazos (ciclos de longitud 1).
Un complejo  $Y$ es \emph{bandera} si es un complejo simplicial y $n+1$ v\'ertices bordean un $n$-simplejo en $Y$ si y solo si son adjacentes. 
\end{definition}

La idea, intuitivamente, es que un complejo $Y$ es bandera si siempre que vemos el $n$-esqueleto de un simplejo en $Y$, este est\'a relleno con un $n$-simplejo.
De esta manera los complejos bandera se encuentran en correspondencia biyectiva con las gr\'aficas simpliciales.

\begin{figure}[h!]
\centerline{\includegraphics[scale=0.38]{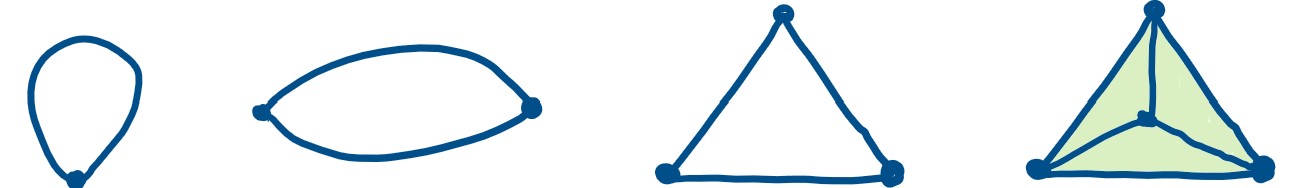}}
\caption{Algunos complejos que no son bandera: los primeros dos ni siquiera son simpliciales, a los otros dos les falta el simplejo de dimensi\'on m\'axima determinado por sus esqueletos (es dif\'icil dibujar en tres dimensiones, pero imag\'inense que el tetrahedro no est\'a relleno).}
\label{fig:notflag}
\end{figure} 

\begin{definition}
Un complejo cubular $X$ es \emph{no positivamente curvado} (`npc') si la aureola de cada v\'ertice en $X$ es un complejo bandera. 
Un complejo cubular es \emph{CAT(0)} si es npc y simplemente conexo. Esto es, los complejos cubulares CAT(0) corresponden exactamente a los cubrientes universales de complejos cubulares npc. 
\end{definition}

\begin{figure}[h!]
\centerline{\includegraphics[scale=.3]{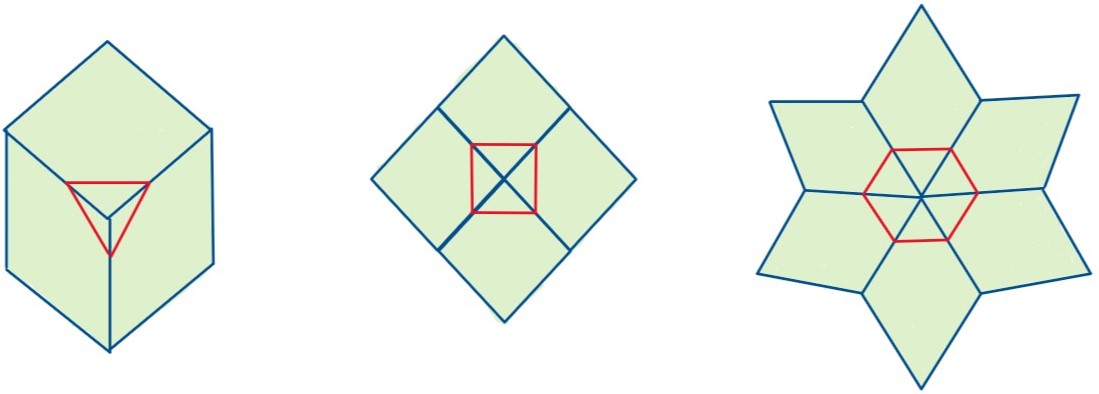}}
\caption{Ejemplos de aureolas de v\'ertices en complejos cubulares. La primera aureola solo puede ser un complejo bandera si hay un 3-cubo pegado en ese v\'ertice; las otras dos s\'i son bandera.}
\label{fig:exlk}
\end{figure} 

Los complejos cubulares CAT(0) de dimensi\'on finita son espacios CAT(0) en el sentido m\'etrico de Cartan-Alexandrov-Toponogov~\cite{BridsonHaefliger}, pero no entraremos en detalle en este aspecto de la teor\'ia, y en general no necesitaremos usar la m\'etrica CAT(0). Para nosotros, la consecuencia m\'as importante de la existencia de una m\'etrica CAT(0) es que los complejos cubulares CAT(0) son contra\'ibles, y por lo tanto los complejos cubulares npc son espacios clasificantes para sus grupos fundamentales. Una demostraci\'on que no utiliza la m\'etrica CAT(0) se puede encontrar en~\cite{arenas2023asphericity}.

\subsection{Un zool\'ogico de ejemplos}

\begin{example} Toda gr\'afica es un complejo cubular npc, puesto que $link(v)$ no tiene aristas y por lo tanto satisface la condici\'on de ser bandera autom\'aticamente.
\end{example}

\begin{example} El espacio euclideano $\mathbb{E}^n$, teselado por $n$-cubos de la manera usual, es un complejo cubular CAT(0). En efecto, la aureola de todo v\'ertice es una $(n-1)$-esfera teselada por  $2^{n}$ simplejos, y por lo tanto es un complejo bandera.
\end{example}

\begin{example} Al igual que el toro de dimensi\'on $2$ en la figura~\ref{fig:0to4}, para cada $n$, el $n$-toro $T^n$ es un complejo cubular no positivamente curvado. Lo podemos ver como el espacio cociente obtenido al tomar un $n$-cubo e identificar pares de caras opuestas como en la figura~\ref{fig:3toro}.
\end{example}

\begin{figure}[h!]
\centerline{\includegraphics[scale=.3]{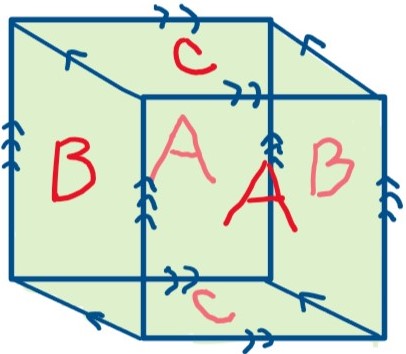}}
\caption{Como obtener un 3-toro identificando caras de un 3-cubo.}
\label{fig:3toro}
\end{figure} 

\begin{remark} \label{rem:girth}
El \emph{cuello} de una gr\'afica $\Gamma$  es la longitud del ciclo más corto contenido en dicha gr\'afica. Si $\Gamma$ no tiene ciclos, entonces definimos el cuello de $\Gamma$ como $\infty$.
Un complejo cubular $X$ de dimensi\'on 2 es npc si y solo si para cada v\'ertice $v$ en $X$ el cuello de $link(v)$ es $\geq 4$.
\end{remark}

\begin{theorem} \label{thm:surface-cube}
Toda superficie cerrada con caracter\'istica de Euler $\leq 0$ es homeomorfa a un complejo cubular npc.
\end{theorem}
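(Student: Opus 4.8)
The plan is to combine the classification of surfaces with the curvature criterion of Remark~\ref{rem:girth}. Any cube-complex structure on a surface is automatically $2$-dimensional, so Remark~\ref{rem:girth} reduces the problem to producing a \emph{square}-complex structure on $S$ in which the link of every vertex is a cycle of girth $\ge 4$; non-positive curvature is then automatic. The first step is to recall that, up to homeomorphism, every closed surface with $\euler(S)\le 0$ is a standard polygon quotient: the orientable surface of genus $g\ge 1$ is a $4g$-gon $P$ glued along $\prod_{i=1}^{g}[a_i,b_i]$, and the non-orientable surface of genus $k\ge 2$ is a $2k$-gon glued along $a_1^2\cdots a_k^2$. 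In both cases $P$ has a single $2$-cell, all of its boundary vertices are identified to one vertex $v_0$, and the number of sides is $4g\ge 4$ or $2k\ge 4$ --- this last inequality being exactly the content of $\euler(S)\le 0$ (the sphere and the projective plane, with $\euler>0$, are the cases it excludes).

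Second, I would turn this polygon quotient into a genuine square complex by the standard \emph{squared subdivision}. I place a barycentre $c$ in the $2$-cell and a midpoint $m_s$ on each side $s$ of $P$, and I cut $P$ into squares, one square $\{v,m_s,c,m_{s'}\}$ for each corner of $P$, where a corner is a vertex $v$ together with the two consecutive sides $s,s'$ meeting there. The edge identifications of $P$ descend to isometric identifications between faces of these squares, so the quotient is a square complex $X$; since subdivision does not change the underlying space, $X$ is homeomorphic to $S$.

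Third, I would verify the girth condition. Because $S$ is a closed surface, the link of every vertex of $X$ is a circle, hence a cycle graph $C_n$, and its girth is simply its length $n$, which equals the number of square-corners meeting at that vertex. There are three kinds of vertices. A side-midpoint $m_e$ lies in exactly two squares of the subdivided polygon, and since $e$ arises from gluing two sides of $P$, there are $2+2=4$ squares around $m_e$, so $n=4$. At the barycentre $c$ the squares are indexed by the corners of $P$, so $n$ equals the number of sides, $4g$ or $2k$, hence $\ge 4$. At $v_0$ the squares are exactly the corners of $P$ incident to $v_0$; as all corners are incident to $v_0$, this is again $4g$ or $2k$, hence $\ge 4$. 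Thus every link is a cycle of length $\ge 4$, and Remark~\ref{rem:girth} gives that $X$ is npc.

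The only real subtlety --- and the step needing the most care --- is the bookkeeping at $v_0$ and $c$, since that is where $\euler(S)\le 0$ is genuinely used: a careless subdivision could leave a vertex with only two or three incident squares (girth $<4$), and what saves us is that the standard polygon has $\ge 4$ sides and a single vertex class. This is the combinatorial shadow of Gauss--Bonnet: for any square structure on a closed surface one has $\sum_v n_v=4F$ and $4F=2E$, whence $\sum_v(4-n_v)=4V-4F=4(V-E+F)=4\,\euler(S)$, so a structure with every $n_v\ge 4$ exists precisely when $\euler(S)\le 0$. I would also confirm the classical fact that both standard words identify all boundary vertices to a single point, since the count at $v_0$ relies on it; a direct traversal of the gluing word settles this. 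Were one instead to start from an arbitrary polygonal structure with several vertex classes, one would have to ensure that every vertex has valence $\ge 4$ and every $2$-cell has $\ge 4$ sides, conditions the single-vertex standard models supply for free.
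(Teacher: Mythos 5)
Your proof is correct and takes essentially the same route as the paper: the identical squared subdivision of the standard polygon (barycentre plus side-midpoints, one square per corner), followed by checking that every vertex link is a cycle of length $4$ or $4g$ (resp.\ $2k$) and invoking Remark~\ref{rem:girth}. The only differences are cosmetic: you spell out the non-orientable case and the single-vertex-class verification, which the paper dismisses as analogous or implicit.
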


\begin{proof} Demostraremos el resultado en el caso en el que la superficie es orientable; el caso no-orientable es an\'alogo. Sea $P$ un $4n$-\'agono regular con lados identificados para obtener una superficie de g\'enero $n$. Subdividimos $P$ a\~nadiendo un v\'ertice $x$ en el centro de $P$, un v\'ertice $y_i$ en el centro de cada arista de $P$ y una arista conectando $y_i$ a $x$ para cada $i \in \{1, \ldots, 4n\}$. El complejo resultante $S$ es claramente un complejo cubular, hay que verificar que es npc examinando las aureolas de los v\'ertices. Para cada $i$, la aureola $link(y_i)$ es un ciclo de longitud $4$, y tanto $link(x)$ como  $link(v)$ son ciclos de longitud $4n \geq 4$, as\'i que por la observaci\'on~\ref{rem:girth}, $S$ es npc. 
\end{proof}

\begin{figure}[h!]
\centerline{\includegraphics[scale=.3]{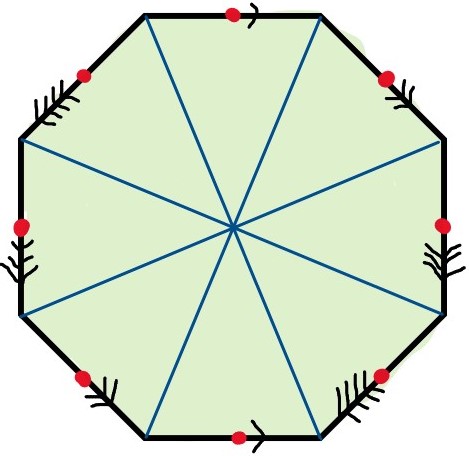}}
\caption{Ejemplo de la subdivisi\'on de $S$ en la demostraci\'on del teorema~\ref{thm:surface-cube} para $g=2$.}
\label{fig:cubS2}
\end{figure}

\subsubsection{¿Qu\'e operaciones topol\'ogicas preservan la propiedad de ser npc?} 

\hfill\\

\textbf{Subdivisi\'on:} La subdivisi\'on baric\'entrica de un complejo cubular npc es de nuevo un complejo cubular npc (ejercicio).

\textbf{Productos:} El producto de complejos cubulares npc tambi\'en es un complejo cubular npc (ejercicio). Esto nos da una manera alternativa de verificar que $\mathbb{E}^n$ y $T^n$ son npc usando solamente que $\mathbb{E}^1$ y $S^1$, por ser gr\'aficas, son npc.

\textbf{Cubrientes:} Si $X$ es un complejo cubular npc y $\widehat{X} \xrightarrow{p} X$ es un mapeo cubriente, entonces $\widehat{X}$ es un complejo cubular npc. Las celdas de $X$ se levantan a celdas de $\widehat{X}$. Como $p$ es un mapeo cubriente, las aureolas de los levantamientos de los v\'ertices en $\widehat{X}$ son id\'enticas a las aureolas de los v\'ertices correspondientes en $X$. Esto nos da una manera alternativa de verificar, por ejemplo, que las superficies cerradas con caracter\'istica de Euler $< 0$ son homeomorfas a complejos cubulares npc: basta con dar esta estructura a una superficie que est\'e cubierta por todas las dem\'as (ejercicio: ¿cu\'al es esta superficie?)

\textbf{Subcomplejos:}\label{subcomp} Si $X$ es un complejo cubular npc de dimensi\'on 2, entonces todo subcomplejo de $X$ es tambi\'en npc. De esto se sigue, como un corolario del teorema~\ref{thm:surface-cube}, que las superficies con frontera son homeomorfas a complejos cubulares npc.  Sin embargo, ser npc no se preserva al tomar subcomplejos en general: esto es claro al considerar por ejemplo el $(n-1)$-esqueleto de un $n$-cubo para $n \geq 3$.
Veremos mas adelante  que los subcomplejos ``convexos'' de complejos cubulares npc tambi\'en son npc. 

\subsubsection{Grupos de Artin de \'angulo recto}

\begin{definition} 
Sea $\Gamma$ una gr\'afica simplicial con v\'ertices $V$ y aristas $E$,  el \emph{grupo de Artin de \'angulo recto} (`gaar', para abreviar) $G(\Gamma)$ es el grupo dado por la presentaci\'on:
\begin{equation}\label{refeq:raag}
G(\Gamma)= \langle x_v : v \in V | [x_u,x_v] : \{u,v\} \in E \rangle
\end{equation}
donde $[x_u,x_v]=x^{-1}_ux^{-1}_vx_ux_v$ es el conmutador de $x_u$ y $x_v$.

\begin{remark} Es un teorema de Droms \cite{DromsPhD83} que los gaars est\'an completamente determinados por sus gr\'aficas:  $G(\Gamma)$ es isomorfo a $G(\Gamma')$ si y solo si $\Gamma$ es isomorfa a $\Gamma'$.
\end{remark}

\begin{definition}
El \emph{complejo de presentaci\'on} $X(\mathcal{P})$ asociado a una presentación $$\mathcal{P}=\langle a_1, \ldots, a_s | r_1, \ldots, r_k\rangle$$ es el complejo de celdas que se construye como sigue: empezamos con un bouquet de lazos $B$ donde cada lazo $l_s$ corresponde  a un generador $a_i$ en $\mathcal{P}$. A este bouquet le pegamos una 2-celda $D_j$ por cada relación $r_j$ en $\mathcal{P}$, donde la frontera de $D_j$ esta pegada a $B$ a lo largo del camino correspondiente a $r_j$ en $\mathcal{P}$. Por el teorema de Seifert Van-Kampen, el grupo fundamental $\pi_1 X(\mathcal{P})$ es isomorfo al grupo presentado por $\mathcal{P}$.
\end{definition}

\begin{notation}
Dado un complejo cubular $X$, escribimos $X^{(k)}$ para denotar a su $k$-esqueleto. As\'i,  $X^{(0)}$ son los v\'ertices de $X$, $X^{(1)}$ son las aristas, $X^{(2)}$ son los cuadrados, etc\'etera.
\end{notation}

\begin{theorem}\label{thm:salvettis} Todo gaar $G(\Gamma)$ es isomorfo a $\pi_1R$ para alg\'un complejo cubular npc $R$. Si $\Gamma$ es finita, entonces $R$ tambi\'en lo es.
\end{theorem}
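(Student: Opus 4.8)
The plan is to realize $G(\Gamma)$ as the fundamental group of its \emph{Salvetti complex} $R$, engineered so that the link of its unique vertex is tautologically a flag complex. Concretely, I would let $R$ have exactly one $0$-cube $p$; one $1$-cube (a loop at $p$) for each vertex $v \in V$, oriented and labelled $x_v$; and, for each clique $\{v_1,\dots,v_k\}$ of $\Gamma$ with $k \geq 2$ (i.e.\ each complete subgraph on $k$ vertices), one $k$-cube glued in as a $k$-torus, namely the quotient of $[0,1]^k$ identifying opposite faces, with the $i$-th pair of opposite faces running along the loop $x_{v_i}$. For $k=2$ this attaches, for each edge $\{u,v\}\in E$, a $2$-torus whose boundary reads the commutator $x_u x_v x_u^{-1} x_v^{-1}$.

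To identify the fundamental group I would observe that $\pi_1 R$ depends only on the $2$-skeleton $R^{(2)}$, and that $R^{(2)}$ is exactly the presentation complex $X(\mathcal{P})$ of the presentation \eqref{refeq:raag}: its $1$-skeleton is a bouquet of $|V|$ loops contributing the free generators $x_v$, and its $2$-cells are precisely the commutator squares for the edges of $\Gamma$. By van Kampen, $\pi_1 R^{(2)} \cong G(\Gamma)$, and attaching cubes of dimension $\geq 3$ does not change $\pi_1$, so $\pi_1 R \cong G(\Gamma)$.

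The substantive step is verifying that $R$ is npc, i.e.\ that $link(p)$ is flag. I would first describe $link(p)$ explicitly. Each loop $x_v$ meets $p$ at its two endpoints, contributing two vertices $v^{+}, v^{-}$, so $link(p)$ has vertex set $\{v^{+},v^{-} : v \in V\}$. Each attached $k$-cube has $2^k$ corners, and the corner with sign vector $(\epsilon_1,\dots,\epsilon_k)$ contributes the $(k-1)$-simplex on $\{v_1^{\epsilon_1},\dots,v_k^{\epsilon_k}\}$; hence the simplices of $link(p)$ are exactly the sets $\{v_1^{\epsilon_1},\dots,v_k^{\epsilon_k}\}$ whose support $\{v_1,\dots,v_k\}$ is a clique of $\Gamma$. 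I would then check that $link(p)$ is a genuine simplicial graph at the level of its $1$-skeleton: there is an edge between $v^{\epsilon}$ and $w^{\delta}$ iff $\{v,w\}\in E$, so $v^{+}$ and $v^{-}$ are never adjacent (no loops) and distinct edges of $\Gamma$ involve distinct vertex pairs (no bigons). Writing $\widehat{\Gamma}$ for this $1$-skeleton, a set $\{v_1^{\epsilon_1},\dots,v_k^{\epsilon_k}\}$ is pairwise adjacent in $\widehat{\Gamma}$ iff the $v_i$ are distinct and pairwise adjacent in $\Gamma$, i.e.\ iff its support is a clique. This matches the list of simplices above exactly, so $link(p)$ is the full flag complex on $\widehat{\Gamma}$ and is therefore flag, giving that $R$ is npc.

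I expect the combinatorial bookkeeping of the previous paragraph, matching corners of cubes to simplices and confirming that no simplex is spuriously present or missing, to be the main obstacle; once the identification of $link(p)$ with the flag complex on $\widehat{\Gamma}$ is in hand, flagness is automatic from the correspondence between simplicial graphs and flag complexes. Finiteness is then immediate: a finite $\Gamma$ has only finitely many cliques, so $R$ has finitely many cubes.
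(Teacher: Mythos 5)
Your proposal is correct and takes essentially the same route as the paper: build the Salvetti complex with one $k$-cube (glued as a $k$-torus) per $k$-clique of $\Gamma$, identify $\pi_1 R \cong G(\Gamma)$ via the presentation complex $R^{(2)} = X(\mathcal{P})$, and verify that the link of the unique vertex is flag. Your explicit description of $link(p)$ as the flag complex on the doubled graph $\widehat{\Gamma}$ (vertices $v^{\pm}$, adjacency inherited from $\Gamma$) is simply a more detailed write-up of the paper's terser verification that simplex boundaries in $link(w)$ come from commuting generators, hence from cliques, hence are filled by the corresponding cubes.
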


\begin{proof}
Definimos $R^{(2)}=X(\mathcal{P})$, donde $X(\mathcal{P})$ es el complejo de presentaci\'on asociado a la presentaci\'on~(\ref{refeq:raag}). Si el cuello de  $\Gamma$ es $\geq 4$, entonces $R^{(2)}= R$ es el complejo cubular deseado, en caso contrario, podemos completar $R^{(2)}$ a un complejo cubular npc $R$ pegando $k$-cubos inductivamente siempre que sus esqueletos est\'en presentes. Esto es, por cada subgr\'afica completa $K_n$ en $\Gamma$, $R$ tiene un $n$-toro cuyo 1-esqueleto est\'a dado por los v\'ertices en $K_n$ y cuyo 2-esqueleto est\'a dado por las aristas en $K_n$. Hay que verificar que $link(w)$ es bandera, donde $w$ es el \'unico v\'ertice de $R$. Es claro de la presentaci\'on dada en~\eqref{refeq:raag} que  $link(w)$ es simplicial; si la frontera de un $n$-simplejo aparece en $link(w)$, esto quiere decir que hay $n$ generadores en~\eqref{refeq:raag} que conmutan, o sea que hay un $K_n$ correspondiente a estos generadores en $\Gamma$, por lo que el resultado se sigue de que $R$ tiene un $n$-cubo correspondiente a cada $K_n$ en $link(w)$.
\end{proof}

El complejo $R$ en el teorema~\ref{thm:salvettis} se conoce como el \emph{complejo de Salvetti} asociado a $G(\Gamma)$.

\begin{figure}[h!]
\centerline{\includegraphics[scale=0.5]{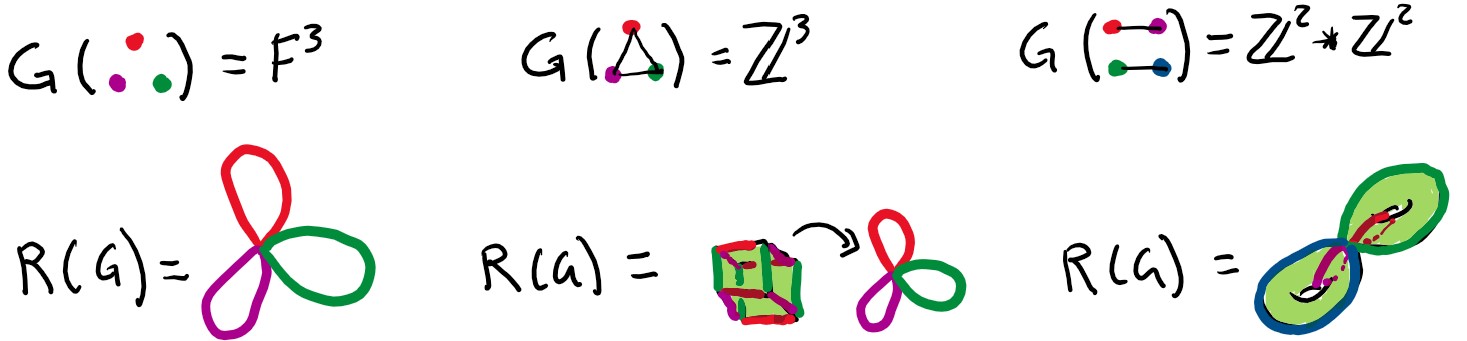}}
\caption{Algunos ejemplos de gaars y sus complejos de Salvetti.}
\label{fig:exraags}
\end{figure}
\end{definition}

\subsection{Sobre la m\'etrica en complejos cubulares}

Como mencionamos antes, los complejos cubulares CAT(0) de dimensi\'on finita admiten una m\'etrica CAT(0), pero tambi\'en admiten otras m\'etricas. Lo m\'as \'util para nosotros, en general, ser\'a dar al 1-esqueleto de  $\widetilde X$ la m\'etrica usual para gr\'aficas. Esto es, la distancia $\dist(v,v')$ entre v\'ertices $v,v'$ es el \'infimo n\'umero de aristas en caminos que conectan a $v$ y $v'$.
Est\'a m\'etrica se extiende a la m\'etrica $\ell^1$ en $\widetilde X$, de tal manera que el 1-esqueleto de  $\widetilde X$ se encaja de manera isom\'etrica en  $\widetilde X$. Estas tres m\'etricas -- la m\'etrica CAT(0) en $\widetilde X$, la m\'etrica para gr\'aficas en el 1-esqueleto de  $\widetilde X$, y su extensi\'on a  $\widetilde X$ -- son cuasi-isom\'etricas en el sentido de la definici\'on~\ref{def:qi}.

\section{Hiperplanos e isometr\'ias locales}

\subsection{Hiperplanos en general}

\begin{definition}
En un complejo cubular $X$ un \emph{cubo central} es la restricci\'on de exactamente una coordenada de un $n$-cubo $c=[0,1]^n$ a $\{\frac{1}{2}\}$. Otra manera de decir esto es que un cubo central es un $(n-1)$-cubo que pasa por el baricentro de $c$ y que es paralelo a alg\'un par de caras (necesariamente opuestas) de $c$. Aqu\'i es importante notar que un cubo central es un subespacio de $c$, pero no un subcomplejo. Cada $n$-cubo $c$ tiene $n$ cubos centrales de dimensi\'on $n-1$ distintos, todos los cuales se intersecan en el baricentro de $c$.
\end{definition}

\begin{figure}[h!]
\centerline{\includegraphics[scale=0.37]{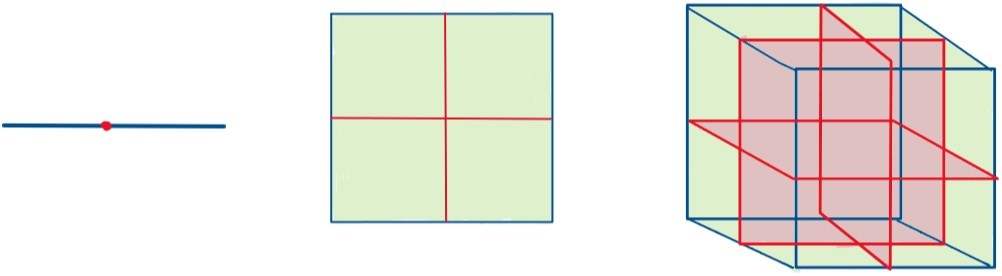}}
\caption{Cubos centrales en cubos de varias dimensiones.}
\label{fig:midcubes}
\end{figure}

\begin{definition}
Si $e$ y $e'$ son cubos centrales en cubos distintos de $X$, entonces o su intersecci\'on es vac\'ia o es un cubo central de una cara de $X$.
Sea $S$ la union ajena de todos los cubos centrales en $X$. Sea $\bar S$ el cociente de $S$ que se obtiene al identificar los cubos centrales a lo largo de las caras en que se intersecan. Un \emph{hiperplano} $H$ es un componente conexo de $\bar S$. N\'otese que la inclusi\'on $S \hookrightarrow X$ induce un mapeo $\bar S \rightarrow X$, y por lo tanto un mapeo $H \rightarrow X$.
\end{definition}

Un hiperplano es \emph{dual} a todo 1-cubo que lo interseca.

\begin{example} Si $X$ es una gr\'afica, entonces sus hiperplanos son los baricentros de las aristas. 
\end{example}

\begin{example} Si $X$ es el espacio euclideano $\mathbb{E}^n$, cubulado de la manera usual, sus hiperplanos son copias de $\mathbb{E}^{n-1}$.
\end{example}

\begin{example} Si $X$ es la cubulaci\'on de la superficie $S$ de g\'enero $g$ detallada en la demostraci\'on del teorema~\ref{thm:surface-cube}, entonces los hiperplanos de $X$ corresponden a curvas cerradas en $S$. 
\end{example}

\begin{figure}[h!]
\centerline{\includegraphics[scale=0.35]{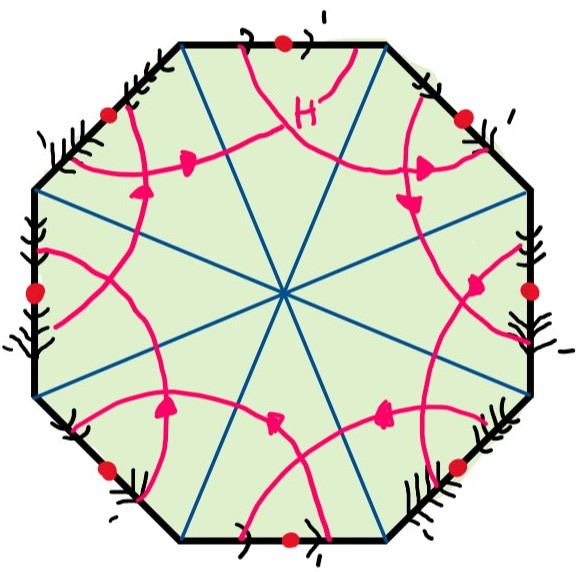}}
\caption{Un posible hiperplano en una superficie de g\'enero 2.}
\label{fig:hipsup}
\end{figure}

\begin{example} Si $X$ es un $n$-toro, cubulado de la manera usual, entonces $X$ tiene $n$ hiperplanos, y estos son todos $(n-1)$-toros, cada par de estos se interseca en un $(n-2)$-toro, cada terna en un $(n-3)$-toro, etc\'etera.
\end{example}

\begin{figure}[h!]
\centerline{\includegraphics[scale=0.7]{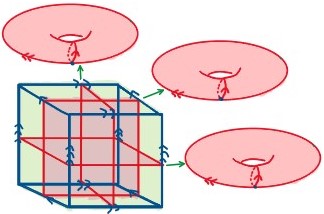}}
\caption{Hiperplanos en un $3$-toro.}
\label{fig:subtoros}
\end{figure}

A grosso modo, los hiperplanos son a complejos cubulares npc lo que las geod\'esicas son a superficies y las superficies m\'inimas son a las  $3$-variedades. De hecho uno puede, en muchos casos, cubular estos objetos de tal suerte que  ciertas colecciones de curvas o superficies  correspondan exactamente a los hiperplanos en la cubulaci\'on. 

\begin{definition}
El \emph{soporte} $N(H)$ de un hiperplano $H$ de $X$ es la vecindad cubular de $H$. Es decir, es el complejo cubular que se obtiene de ``engordar'' cada cubo central de $H$ al cubo que lo contiene en $X$. 
\end{definition}

\begin{figure}[h!]
\centerline{\includegraphics[scale=0.5]{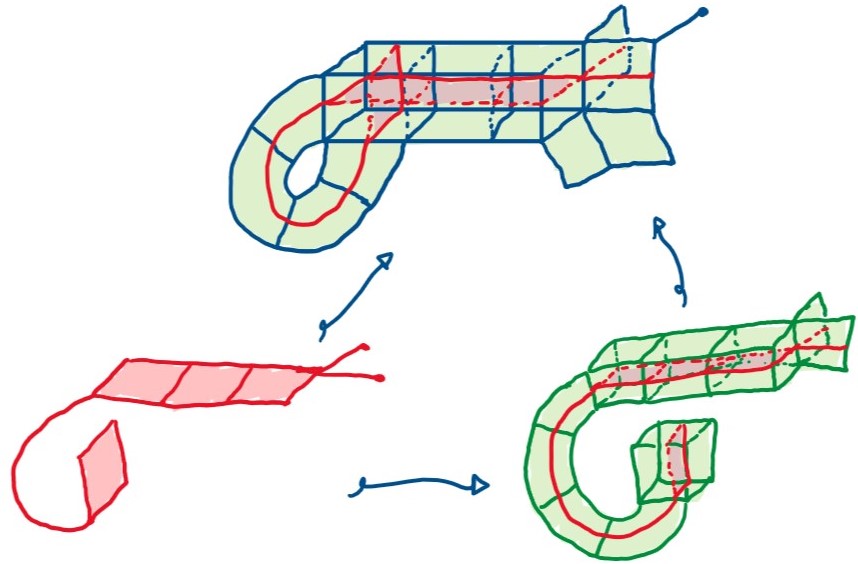}}
\caption{Un hiperplano y su soporte.}
\label{fig:hipsop}
\end{figure} 

Un mapeo celular es \emph{combinatorio} si respeta la dimensi\'on de las celdas. Esto es, $\varphi:X \rightarrow Y$  es un mapeo combinatorio si es un homeom\'orfismo al restringirlo al interior de las celdas de $X$.

\begin{definition}
Un mapeo combinatorio $\varphi:X \rightarrow Y$ entre complejos cubulares npc es una \emph{isometr\'ia local} si es localmente inyectivo y  si para cada par de  1-cubos $e, f$ adyacentes a un 0-cubo $y$ se cumple lo siguiente: si $e,f$  se mapean a 1-cubos $\varphi(e),\varphi(f)$ en $Y$, y $\varphi(e),\varphi(f)$ bordean la esquina de un cuadrado alrededor de $\varphi(y)$, entonces $e, f$ bordean la esquina de un cuadrado alrededor de $y$.
\end{definition}

\begin{figure}[h!]
\centerline{\includegraphics[scale=0.75]{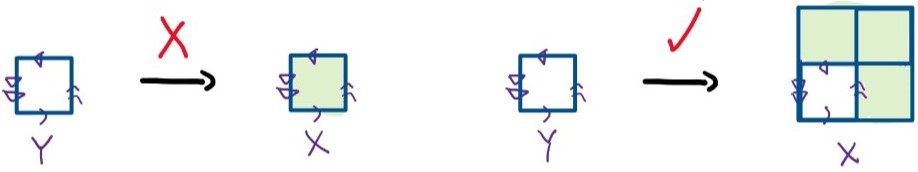}}
\caption{En la izquierda un mapeo que no es una isometr\'ia local; en la derecha una isometr\'ia local. Aqu\'i las flechas indican a d\'onde se est\'a mapeando cada arista de $Y$ en $X$.}
\label{fig:localex}
\end{figure} 

\begin{example} Toda inmersi\'on de gr\'aficas es una isometr\'ia local.
\end{example}

\begin{example} Todo mapeo cubriente $\hat X  \rightarrow X$ de complejos cubulares npc es una isometr\'ia local.
\end{example}

\begin{lemma}\label{lem:isosop} Para todo hiperplano $H$ de $X$, el mapeo $N(H) \looparrowright X$ es una isometr\'ia local.
\end{lemma}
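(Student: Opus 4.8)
El plan es verificar directamente la definici\'on de isometr\'ia local: que $\varphi\colon N(H)\looparrowright X$ es combinatorio, localmente inyectivo, y que no le faltan esquinas de cuadrados. Que $\varphi$ es combinatorio es inmediato de la construcci\'on: los cubos de $N(H)$ son exactamente los cubos de $X$ que contienen un cubo central de $H$, y $\varphi$ env\'ia cada uno de ellos isom\'etricamente a s\'i mismo, de modo que $\varphi$ es un homeomorfismo al restringirlo al interior de cada celda.

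El paso preliminar es entender la estructura local de $N(H)$. Por construcci\'on, $N(H)$ se obtiene pegando los cubos que contienen un cubo central de $H$ siguiendo las identificaciones de dichos cubos centrales dentro de $H$; como estas identificaciones solo ocurren a lo largo de caras paralelas a la direcci\'on dual a $H$, dos de estos cubos se pegan en $N(H)$ \'unicamente a lo largo de caras que contienen a sus aristas duales. De aqu\'i se obtiene la observaci\'on clave: cada v\'ertice $\tilde v$ de $N(H)$ es incidente a exactamente una arista dual a $H$, y las dem\'as aristas en $\tilde v$ son paralelas a $H$. (Intuitivamente, $N(H)$ es localmente un producto con $H$ en la direcci\'on ``horizontal'' y la arista dual en la direcci\'on $[0,1]$.) La inyectividad local es consecuencia de esto: cubos distintos de $N(H)$ en $\tilde v$ se proyectan a cubos distintos de $X$, y la auto-osculaci\'on de $H$ ---dos aristas duales en un mismo $v\in X$--- da lugar a v\'ertices \emph{distintos} de $N(H)$, precisamente porque la regla de pegado no los identifica. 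Esto explica tambi\'en por qu\'e $\varphi$ es una inmersi\'on y no necesariamente un encaje. (El mismo an\'alisis de aureolas muestra que $N(H)$ es un complejo cubular npc, de modo que la noci\'on de isometr\'ia local tiene sentido.)

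La parte medular es la condici\'on sobre las esquinas, y es el \'unico lugar donde se usa que $X$ es npc. Sean $e,f$ dos 1-cubos de $N(H)$ en $\tilde v$ cuyas im\'agenes $a=\varphi(e)$ y $b=\varphi(f)$ bordean la esquina de un cuadrado $Q$ en $v=\varphi(\tilde v)$. Si $e$ y $f$ yacen en un mismo cubo de $N(H)$, la 2-cara de ese cubo que generan se proyecta a un cuadrado con lados $a,b$ en $v$, que coincide con $Q$ pues $link(v)$ es simplicial; esto produce la esquina buscada. Por la unicidad de la arista dual, este caso ocurre siempre que alguna de $a,b$ sea dual a $H$. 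Resta el caso en que ambas son paralelas a $H$: sea $e_d$ la \'unica arista dual en $\tilde v$ y $d=\varphi(e_d)$. Como $e$ yace en un cubo de $N(H)$ cuya arista dual en $\tilde v$ es $e_d$, las aristas $a$ y $d$ bordean un cuadrado en $X$; an\'alogamente $b$ y $d$. Luego $[a],[b],[d]$ son adjacentes dos a dos en $link(v)$, y como $X$ es npc, $link(v)$ es bandera, as\'i que generan un 2-simplejo; este corresponde a un 3-cubo $K$ con aristas $a,b,d$ en $v$. Como $d$ es dual a $H$, el cubo central de $K$ en esa direcci\'on contiene el punto medio de $d$ y por lo tanto est\'a en $H$, de modo que $K$ contiene un cubo central de $H$ y $K\in N(H)$; su cara generada por $a,b$ es $Q$, que se levanta a la esquina deseada en $\tilde v$ con lados $e,f$.

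El obst\'aculo principal es precisamente este \'ultimo caso: cuando $e$ y $f$ no comparten un cubo de $N(H)$, no basta la combinatoria de $N(H)$, sino que hace falta invocar la condici\'on de bandera de $link(v)$ para rellenar el cubo $K$ de dimensi\'on mayor y verificar que $H$ lo atraviesa. El otro punto que requiere cuidado ---m\'as de contabilidad que conceptual--- es establecer con precisi\'on la estructura local de $N(H)$, en particular la unicidad de la arista dual en cada v\'ertice, a partir de la regla de pegado, ya que $\varphi$ no es inyectiva en general y es necesario distinguir los levantamientos correctos.
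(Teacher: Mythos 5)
Tu propuesta es correcta y sigue esencialmente el mismo camino que la demostraci\'on del art\'iculo: la misma dicotom\'ia seg\'un si una de las dos aristas es la arista dual a $H$ en $N(H)$, y, en el caso restante, el mismo uso de la condici\'on bandera (npc) para rellenar un 3-cubo que $H$ atraviesa, cuya cara proporciona la esquina de cuadrado buscada. La \'unica diferencia es de grado de detalle: t\'u verificas adem\'as la inyectividad local y la unicidad de la arista dual en cada v\'ertice de $N(H)$, puntos que la demostraci\'on del art\'iculo da por sentados.
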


\begin{proof}
Sea $\bar c$ un cuadrado en la imagen, con 1-cubos  $\bar e, \bar e'$ y $\bar f, \bar f'$, de tal forma que $\bar e$ y $\bar f$ comparten un v\'ertice $\bar v$. 
Tenemos dos casos: primero, si $H$ es dual a  $\bar e$ o a $\bar f$ en $N(H)$, entonces necesariamente se extiende a $\bar c$, y es dual a la cara opuesta $\bar e'$ o $\bar f'$, por lo tanto hay un cuadrado correspondiente $c$ en $N(H)$.
Si $H$ no es dual a $\bar e$ ni a $\bar f$, entonces, como $\bar c$ est\'a en la imagen de $N(H) \looparrowright X$,  $H$ es dual a un 1-cubo $\bar g$ que comparte el v\'ertice $\bar v$ con $\bar e$ y $\bar f$. As\'i, $ e, g$ y $f, g$ forman esquinas de cuadrados  $ c'$ y $ c''$ en $N(H)$. Los cuadrados correspondientes en la imagen forman, junto con $\bar c$, la esquina de un 3-cubo, y como $X$ es npc, esto implica que existe un 3-cubo $\bar K$ rellenando esa esquina. Ahora, $H$ se extiende al interior de $\bar K$, y por lo tanto existe un 3-cubo $K$ en $N(H)$ que se mapea a $\bar K$, as\'i que $e, f$ son la esquina de un cuadrado -- una cara de $K$ -- en $N(H)$.
\end{proof}

\subsection{Hiperplanos en complejos CAT(0) y convexidad}

En el caso de los complejos cubulares CAT(0), podemos parafrasear la definici\'on de hiperplano de manera un poco m\'as elegante: 

\begin{definition}\label{def:hip2}
Un \emph{hiperplano} en un complejo cubular CAT(0) $\widetilde X$ es un subespacio conexo que interseca a cada cubo de $\widetilde X$ en un  \'unico cubo central o en el vac\'io.
\end{definition}

Podemos definir los soportes de los hiperplanos de manera an\'aloga:

\begin{definition}
El \emph{soporte} $N(H)$ de un hiperplano $H$ de $\widetilde X$ es la uni\'on de todos los cubos que contienen a $H$.
\end{definition}

¿Por que no dar esta definici\'on desde el principio? Bueno, una raz\'on es que al definir los hiperplanos como en~\ref{def:hip2}, no est\'a claro en principio que estos existan. Eso es parte de lo que implica el siguiente:

\begin{theorem}\label{thm:cat0}
Sea $\widetilde X$ un complejo cubular CAT(0), entonces (\cite{Sageev95}):

\begin{enumerate}

\item \label{it:cat1} Cada cubo central yace en un \'unico hiperplano de $\widetilde X$.
\item \label{it:cat2} Cada hiperplano $H$ de $\widetilde X$ es un complejo cubular CAT(0).
\item \label{it:cat3} $N(H)\cong H\times[0, 1]$  y es un subcomplejo convexo de $\widetilde X$.
\item \label{it:cat4} $\widetilde X - H$ tiene exactamente dos componentes conexas.
\item \label{it:cat5} Si $H$ y $H'$ son hiperplanos que se cruzan en $\widetilde X$, y  si $e$ y $e'$ son 1-cubos duales a $H$ y $H'$ con un v\'ertice en com\'un, entonces hay un cuadrado en $\widetilde X$ que contiene a $e$ y $e'$.  
\end{enumerate}
\end{theorem}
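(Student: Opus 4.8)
El plan es reducir las cinco afirmaciones a un marco puramente combinatorio y a una \'unica propiedad de separaci\'on, que ser\'a el coraz\'on del argumento; las dem\'as propiedades colgar\'an de ella junto con la condici\'on de ser bandera y el Lema~\ref{lem:isosop}. Primero introduzco en el conjunto de 1-cubos de $\widetilde X$ la relaci\'on de equivalencia $\sim$ generada por ``$e$ y $f$ son aristas opuestas de un mismo cuadrado''. Cada cubo central de un cuadrado es dual a una pareja de aristas opuestas, y dos cubos centrales se identifican al formar $\bar S$ precisamente cuando sus aristas duales est\'an relacionadas por uno de los movimientos que generan $\sim$. Por lo tanto las clases de $\sim$ est\'an en biyecci\'on con los hiperplanos, y la clase de cualquier arista dual a un cubo central determina el componente de $\bar S$ que lo contiene; esto da la unicidad de~\eqref{it:cat1} a nivel de $\bar S$. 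Lo que faltar\'a para completar~\eqref{it:cat1} en el sentido de la Definici\'on~\ref{def:hip2} es que el mapeo $H \to \widetilde X$ sea un encaje cuya imagen corte cada cubo en a lo m\'as un cubo central, y eso lo obtendr\'e como consecuencia de la separaci\'on.

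El paso central es~\eqref{it:cat4}. La idea es co-orientar $H$: oriento transversalmente una arista dual y propago la orientaci\'on a trav\'es de cuadrados, declarando que aristas opuestas de un cuadrado reciben co-orientaciones paralelas. A cada camino de aristas $\gamma$ le asocio un n\'umero de cruce con signo $\#(\gamma,H)\in\integers$, que cuenta las veces que $\gamma$ atraviesa $H$ con el signo dictado por la co-orientaci\'on. El punto clave es que $\#(\cdot,H)$ es invariante bajo homotop\'ia relativa a los extremos: las homotop\'ias elementales en un complejo cubular consisten en empujar un subcamino a trav\'es de un cuadrado o en cancelar un ida y vuelta, y ninguna de las dos altera el conteo con signo, precisamente porque las aristas opuestas de un cuadrado llevan co-orientaciones paralelas. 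La misma invariancia muestra que la co-orientaci\'on es consistente, sin obstrucci\'on tipo banda de M\"obius; esto es la bilateralidad de $H$. Como $\widetilde X$ es CAT(0), y en particular simplemente conexo, todo lazo es homot\'opicamente trivial y por ende tiene n\'umero de cruce cero. De aqu\'i: los dos extremos de una arista dual a $H$ no pueden unirse por un camino que evite $H$ (tal camino, junto con la arista, dar\'ia un lazo de n\'umero de cruce $\pm1$), as\'i que $\widetilde X \setminus H$ tiene al menos dos componentes; y la bilateralidad da exactamente dos, pues cada punto de $\widetilde X\setminus H$ se conecta, sin tocar $H$, a uno de los dos lados de $N(H)$. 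Esto prueba~\eqref{it:cat4}.

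Con la co-orientaci\'on consistente en mano, cada cubo $c$ de $N(H)$ corta a $H$ en un \'unico cubo central $m_c$ y se descompone como $c \cong m_c \times [0,1]$, con la direcci\'on $[0,1]$ dada por la co-orientaci\'on; estas descomposiciones son compatibles al pegar porque la co-orientaci\'on es global, de modo que ensamblan un homeomorfismo $N(H) \cong H \times [0,1]$, lo que da la primera mitad de~\eqref{it:cat3} y, de paso, que $H$ corta cada cubo en a lo m\'as un cubo central, completando~\eqref{it:cat1}. Para la convexidad uso que, por la invariancia por homotop\'ia del n\'umero de cruce, una geod\'esica combinatoria cruza cada hiperplano a lo m\'as una vez; de ello se sigue que ninguna geod\'esica con extremos en $N(H)$ puede salir y volver a entrar, as\'i que $N(H)$, y con \'el su ``rebanada central'' $H$, son subcomplejos convexos, terminando~\eqref{it:cat3}. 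Como los subcomplejos convexos de un complejo CAT(0) son de nuevo CAT(0) (ya sea por ser simplemente conexos y con aureolas que se encajan, preservando la condici\'on de ser bandera, o porque $H$ es un factor de $N(H)\cong H\times[0,1]$), obtengo~\eqref{it:cat2}.

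Finalmente~\eqref{it:cat5} es un enunciado local que descansa en la condici\'on de ser bandera. Dados hiperplanos $H \neq H'$ que se cruzan y aristas duales $e, e'$ con un v\'ertice com\'un $v$, en $\link(v)$ las aristas $e$ y $e'$ determinan dos v\'ertices distintos (distintos porque $H\neq H'$ implica $e \not\sim e'$); basta producir una arista entre ellos, pues tal arista es exactamente la esquina de un cuadrado en $\widetilde X$ con lados $e, e'$. Como $H$ y $H'$ se cruzan, existe en alg\'un lugar un cuadrado donde ambos pasan; usando la simple conexidad, transporto ese cuadrado de cruce hasta $v$ mediante un diagrama de discos de \'area m\'inima, y la condici\'on de ser bandera fuerza a que la esquina en $v$ ya est\'e rellena, produciendo el cuadrado buscado. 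El obst\'aculo principal de toda la demostraci\'on es~\eqref{it:cat4}: montar con cuidado la invariancia por homotop\'ia del n\'umero de cruce con signo y la consistencia de la co-orientaci\'on, de donde todo lo dem\'as se deduce con relativa facilidad.
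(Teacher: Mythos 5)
Un comentario de contexto: el art\'iculo no demuestra este teorema, solo lo enuncia citando a \cite{Sageev95}, as\'i que tu propuesta se eval\'ua por sus propios m\'eritos. La parte s\'olida es el argumento de n\'umero de cruce: la invariancia homot\'opica del conteo de cruces (con signo o m\'odulo 2) bajo las homotop\'ias elementales, combinada con la conexidad simple de $\widetilde X$, s\'i establece la bilateralidad de $H$ y la mitad ``al menos dos componentes'' de~\eqref{it:cat4}; esa es, en efecto, la ruta est\'andar para esas dos afirmaciones.

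El problema serio es que tratas el encaje de $H$ --- que $H$ corta cada cubo en a lo m\'as un cubo central --- como consecuencia ``de paso'' de la separaci\'on y de la co-orientaci\'on, y no lo es. Si $H$ contuviera dos cubos centrales $m_i,m_j$ de un mismo cubo $c$, la frontera del cuadrado generado por las direcciones $i,j$ tendr\'ia n\'umero de cruce con signo igual a $0$ respecto de $H$ (sus cuatro lados ser\'ian duales a $H$ y las contribuciones se cancelan por pares), de modo que la conexidad simple no produce contradicci\'on alguna; la bilateralidad tampoco excluye autocruces (una curva inmersa bilateral en una superficie puede autointersecarse: la co-orientaci\'on es una condici\'on sobre el ``fibrado normal'', no sobre la inyectividad). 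Esta laguna se propaga: la descomposici\'on $c\cong m_c\times[0,1]$ presupone exactamente la unicidad de $m_c$, as\'i que $N(H)\cong H\times[0,1]$ en~\eqref{it:cat3} no queda demostrado, y tu prueba de ``exactamente dos componentes'' en~\eqref{it:cat4} usa los dos lados de $N(H)$, por lo que tambi\'en queda en el aire, igual que~\eqref{it:cat2}. Lo mismo ocurre con la convexidad: tu frase ``por la invariancia por homotop\'ia del n\'umero de cruce, una geod\'esica combinatoria cruza cada hiperplano a lo m\'as una vez'' es un non sequitur, pues la invariancia solo dice que todos los caminos con los mismos extremos tienen el mismo n\'umero de cruce con signo; a priori una geod\'esica podr\'ia cruzar $H$ dos veces con signos opuestos. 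Esa afirmaci\'on es precisamente el lema~\ref{lem:geodesic} del art\'iculo, cuya demostraci\'on requiere el argumento de \'area m\'inima con diagramas de disco, no solo la invariancia. La reparaci\'on natural invierte el orden l\'ogico: primero se prueba el lema~\ref{lem:geodesic} (o se desarrolla la t\'ecnica de diagramas de disco), de ah\'i se deduce el encaje (dos aristas adyacentes de un cuadrado duales al mismo hiperplano dar\'ian un camino geod\'esico de longitud $2$ --- sus extremos son esquinas opuestas del cuadrado, a distancia $2$ --- que cruza $H$ dos veces), y solo entonces se montan la estructura de producto de $N(H)$, la convexidad y~\eqref{it:cat2}. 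Finalmente, tu tratamiento de~\eqref{it:cat5} (``transporto el cuadrado de cruce hasta $v$ mediante un diagrama de discos de \'area m\'inima y la condici\'on bandera fuerza la esquina'') es solo un gesto hacia la demostraci\'on correcta: esa es de hecho una de las partes m\'as delicadas del teorema y el an\'alisis de reducci\'on de \'area que invocas es justamente lo que habr\'ia que escribir.
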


Recordemos que el $1$-esqueleto $\widetilde{X^{(1)}}$ de un complejo cubular CAT(0) $\widetilde X$ se puede ver como un espacio m\'etrico donde la m\'etrica es la m\'etrica usual para gr\'aficas. As\'i, la distancia entre v\'ertices $v,v'$ es igual al n\'umero de aristas en un camino de longitud m\'inima entre $v$ y $v'$. Una \emph{geod\'esica} es un camino de longitud m\'inima en esta m\'etrica.

\begin{lemma}\label{lem:geodesic} Un camino $\sigma \rightarrow \widetilde X$ es una geod\'esica si y solo si cada arista de $\sigma$ es dual a un hiperplano distinto de $\widetilde X$.
\end{lemma}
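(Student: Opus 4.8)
The plan is to reformulate the statement in terms of \emph{separation}. By Theorem~\ref{thm:cat0}\eqref{it:cat4} each hyperplane $H$ disconnects $\widetilde X$ into two halves, which induces a ``side'' function $s_H$ on the vertices of $\widetilde X$. Using the product structure $N(H)\cong H\times[0,1]$ from \eqref{it:cat3}, a $1$-cube dual to $H$ is a fiber $\{p\}\times[0,1]$ whose endpoints lie in the two different components of $\widetilde X-H$, while a $1$-cube \emph{not} dual to $H$ has its (open) interior disjoint from $H$ and hence both endpoints on the same side. Thus, along any edge path, $s_H$ flips precisely at the edges dual to $H$; in particular a path crosses $H$ an odd number of times if and only if its endpoints are separated by $H$, and a path that crosses $H$ exactly once must separate its endpoints. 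I will record this bookkeeping as the basic tool for both implications.

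For the implication ``distinct hyperplanes $\Rightarrow$ geodesic'', suppose $\sigma$ has length $n$ and its edges are dual to $n$ \emph{distinct} hyperplanes $H_1,\dots,H_n$. Since $\sigma$ crosses each $H_k$ exactly once, each $H_k$ separates the endpoints of $\sigma$. Now let $\tau$ be any path with the same endpoints. For every $k$, the path $\tau$ must cross the separating hyperplane $H_k$ at least once; since by \eqref{it:cat1} each $1$-cube is dual to a unique hyperplane, the edges of $\tau$ realizing these $n$ crossings are distinct. Hence $\tau$ has at least $n$ edges, so $\sigma$ is a geodesic.

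For the converse I argue by contraposition and produce a shorter path. Assume some hyperplane $H$ is dual to two edges of $\sigma$, and choose a repeated hyperplane together with two crossings $e_i,e_j$ (with $i<j$) minimizing the gap $j-i$. If $j-i=1$ the two fibers share a vertex and must coincide, so $\sigma$ backtracks and can be shortened; so assume $j-i\ge 2$. By minimality of the gap, the intermediate edges $e_{i+1},\dots,e_{j-1}$ are dual to \emph{distinct} hyperplanes, all different from $H$, so by the implication already proved the intermediate subpath $\beta$ is itself a geodesic joining the two level-$1$ vertices $v_i,v_{j-1}$ of $N(H)=H\times[0,1]$. Since $N(H)$ is convex by \eqref{it:cat3} and graph geodesics in a product $H\times[0,1]$ between level-$1$ vertices stay at level $1$, the path $\beta$ lies in the slice $H\times\{1\}$. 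Consequently each intermediate edge is ``horizontal'', and the hyperplane dual to it meets $H=H\times\{1/2\}$, i.e. \emph{crosses} $H$. Applying \eqref{it:cat5} to $e_i$ (dual to $H$) and $e_{i+1}$ (dual to an intermediate hyperplane crossing $H$), which share the vertex $v_i$, yields a square; rerouting $\sigma$ around the opposite two sides of this square preserves length but moves the $H$-crossing one step to the right, decreasing the gap by one. Iterating this push brings the two $H$-crossings to adjacent position, forcing them onto a common fiber and hence into a backtrack, which can be deleted to shorten $\sigma$ by two; this contradicts $\sigma$ being geodesic.

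The routine parts are the side-counting of the first paragraph and the separation lower bound of the second. The delicate point, which I expect to be the main obstacle, is the shortening step: specifically, justifying that the intermediate subpath is confined to the slice $H\times\{1\}$ so that every intermediate hyperplane genuinely crosses $H$, and then checking that the square-pushing induction of \eqref{it:cat5} strictly decreases the gap and terminates in a removable backtrack.
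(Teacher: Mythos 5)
Your proof is correct, and it takes a genuinely different route from the one in the paper. The paper's (sketched) argument is a disk-diagram argument for the ``geodesic $\Rightarrow$ hiperplanos distintos'' direction only: it picks a hyperplane $H$ crossing $\sigma$ twice with a subpath $\tau$ of minimal length, takes a path $h$ running along $H$, observes that $h\tau^{-1}$ bounds a disk in the $2$-esqueleto, and compares lengths by counting hyperplanes dual to $h$ against those crossing $\tau$; the converse direction is left implicit. You instead prove the converse explicitly via the separation bookkeeping (each of the $n$ distinct hyperplanes separates the endpoints, and by~\eqref{it:cat1} any competing path needs $n$ distinct edges to cross them, giving the lower bound), and you prove the ``geodesic $\Rightarrow$ distintos'' direction by an explicit combinatorial shortening: minimal gap between two crossings of a repeated $H$, confinement of the intermediate subpath to the slice $H\times\{1\}$ (which follows from convexity of $N(H)$ in~\eqref{it:cat3} together with the fact that the subpath has no $H$-dual edges, so it cannot change level), and then the square-pushing supplied by~\eqref{it:cat5}, which strictly decreases the gap while preserving length and the distinctness of the intermediate dual hyperplanes, terminating in a removable backtrack. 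What the paper's approach buys is brevity and a technique (disk diagrams) that generalizes to many other statements in the subject, which is presumably why the author defers to~\cite{WiseCBMS2012}; what yours buys is self-containedness -- it uses only the itemized conclusions of Theorem~\ref{thm:cat0} plus the definition of convexity, avoids disk diagrams entirely, and actually proves both directions of the equivalence, including the one the paper's sketch does not address. The two delicate points you flagged (confinement to the slice, and termination of the pushing) are exactly the right ones to worry about, and both are justified by the reasoning you give.
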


\begin{proof}[Sketch]
Procedemos por contradicci\'on: supongamos que  $\sigma$ es una geod\'esica y que existe un hiperplano $H$ que cruza a $\sigma$ m\'as de una vez. Sea $h$ un camino de longitud m\'inima en $H$ que interseca a un subcamino $\tau$ de $\sigma$ en dos puntos, y supongamos adem\'as que $H$ y $\tau$ est\'a elegidos para satisfacer lo siguiente: entre todos los hiperplanos que cruzan a $\sigma$ m\'as de una vez, $H$ tiene la propiedad de que el subcamino  $\tau \subset \sigma$ tiene longitud m\'inima entre todas las posibles elecciones de subcaminos de $\sigma$ cruzados m\'as de una vez por alg\'un hiperplano. 

La concatenaci\'on $h\tau^{-1}$ es la frontera de un disco $D$ en $\widetilde X$ (como $h\tau^{-1}$ es un camino cerrado, necesariamente es nullhomot\'opico en $\widetilde X$). Podemos asumir adem\'as que $D$ yace en el $2$-esqueleto de $\widetilde X$. Consid\'erese el camino $\sigma'$ que se obtiene al substituir $\tau$ con $h$ como en la figura~\ref{fig:geodesicsketch}. Por las elecciones que hicimos en el p\'arrafo anterior,  cada hiperplano dual a $h$ en $D$ cruza a $\tau$, podemos concluir que $h$ es m\'as corto que $\tau$, y que por lo tanto $\sigma'$ es m\'as corto que $\sigma$, contradiciendo la suposici\'on de que $\sigma$ es una geod\'esica.
\end{proof}

\begin{figure}[h!]
\centerline{\includegraphics[scale=0.5]{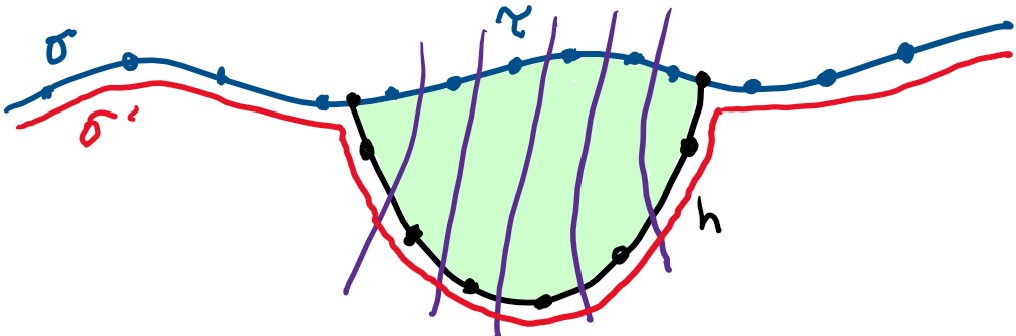}}
\caption{Idea de la demostraci\'on del lema~\ref{lem:geodesic}.}
\label{fig:geodesicsketch}
\end{figure} 

Un subcomplejo $S \subset X$  es \emph{pleno} si siempre que un subcomplejo en $S$  bordea un $n$-cubo  en $X$, entonces este cubo tambi\'en yace en $S$.

\begin{definition} Un subcomplejo pleno $S \subset X$ es \emph{convexo} si para toda geod\'esica $\sigma \rightarrow X$ cuyos extremos yacen en $S$, tenemos que $\sigma \subset S$.  
\end{definition}

\begin{figure}[h!]
\centerline{\includegraphics[scale=0.35]{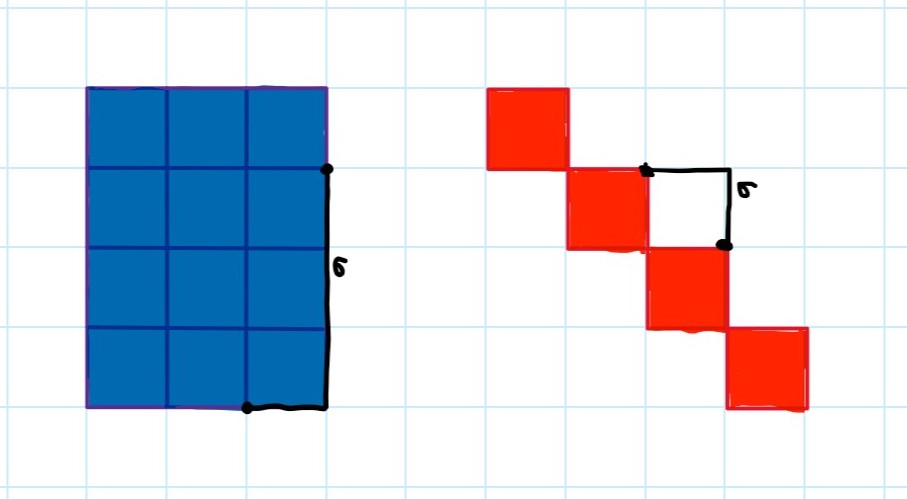}}
\caption{Un subcomplejo convexo (izquierda) y un subcomplejo no convexo (derecha) de la cubulaci\'on est\'andar de $\mathbb{E}^2$.}
\label{fig:convexex}
\end{figure} 

\begin{remark}\label{rmk:combconv}
Sea $Y$  un subcomplejo conexo  de un complejo cubular CAT(0) $\widetilde X$. Si para cada $n$-cubo $c$  con $n \geq 2$ en  $\widetilde X$ se satisface que si toda una esquina de $c$ yace en $Y$, entonces $c$ yace en $Y$, entonces $Y$ es convexo.
\end{remark}

\begin{theorem} \label{thm:local} Si $\varphi: Y \rightarrow X$  es una isometr\'ia local, entonces:
 \begin{enumerate}
 
 \item \label{it:isom2} El mapeo inducido $\tilde{\varphi}: \widetilde{Y} \rightarrow \widetilde{X}$ es inyectivo, 
 \item \label{it:isom3} $\widetilde{Y}\hookrightarrow \widetilde{X}$ es un subcomplejo convexo, y
 \item \label{it:isom1} $\varphi_*: \pi_1 Y \rightarrow \pi_1 X$ es inyectiva.
 \end{enumerate}

\end{theorem}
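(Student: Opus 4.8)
The plan is to push everything down to the lifted map between universal covers and to extract all three conclusions from the injectivity of that lift. First I would lift $\varphi$ to a map $\tilde\varphi\colon\widetilde Y\to\widetilde X$ between the (CAT(0)) universal covers, fitting into $p_X\circ\tilde\varphi=\varphi\circ p_Y$ with $p_X,p_Y$ the covering projections. Since covering maps are local isometries (by the example following the definition of local isometry) and being a local isometry is a local condition, $\tilde\varphi$ is again a combinatorial local isometry. Granting~(\ref{it:isom2}), conclusion~(\ref{it:isom1}) is then immediate: $\tilde\varphi$ intertwines the free deck actions of $\pi_1 Y$ and $\pi_1 X$, so if $g\in\ker\varphi_*$ were nontrivial then $\tilde\varphi(g\cdot y)=\varphi_*(g)\cdot\tilde\varphi(y)=\tilde\varphi(y)$ for all $y$, and injectivity of $\tilde\varphi$ would force $g$ to fix $\widetilde Y$ pointwise, contradicting freeness. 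So the real content lies in~(\ref{it:isom2}) and~(\ref{it:isom3}).

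The key lemma I would isolate is that \emph{$\tilde\varphi$ sends combinatorial geodesics to combinatorial geodesics}. Granting this, injectivity follows quickly: if $\tilde\varphi(u)=\tilde\varphi(u')$ with $u\neq u'$, choose a geodesic $\gamma$ from $u$ to $u'$ in $\widetilde Y$; its image is then a geodesic from $\tilde\varphi(u)$ to itself, hence has length $0$, forcing $u=u'$ (that $\tilde\varphi$ is injective on higher cells then follows from combinatoriality and local injectivity). To prove the key lemma I would argue by contradiction via the hyperplane criterion of Lemma~\ref{lem:geodesic}: take a geodesic $\gamma=e_1\cdots e_n$ of minimal length whose image is not a geodesic. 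Since subpaths of geodesics are geodesics and the image of a subpath is a subpath of the image, minimality forces the only repeated hyperplane crossing of $\tilde\varphi(\gamma)$ to be between the first and last edges $\tilde\varphi(e_1),\tilde\varphi(e_n)$, both dual to a single hyperplane $\bar H$ of $\widetilde X$. Exactly as in the sketch of Lemma~\ref{lem:geodesic}, the concatenation of $\tilde\varphi(\gamma)$ with a minimal path in $\bar H$ bounds a disk $D$ in the $2$-skeleton of $\widetilde X$; I would then pull $D$ back through the local isometry square by square and conclude that $e_1$ and $e_n$ are dual to a common hyperplane of $\widetilde Y$, contradicting that $\gamma$ is a geodesic.

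For convexity~(\ref{it:isom3}) I would apply the criterion of Remark~\ref{rmk:combconv} to the connected image $Z=\tilde\varphi(\widetilde Y)$, checking local convexity: if a full corner of an $n$-cube $c$ (with $n\geq 2$) of $\widetilde X$ lies in $Z$, then so does $c$. Using injectivity, the $n$ edges forming that corner at a vertex $p$ lift to $n$ edges at a single vertex $q$ of $\widetilde Y$; since each pair of these corner edges bounds a square face of $c$ around $p$, the defining property of the local isometry $\tilde\varphi$ produces, for each pair, a square around $q$ in $\widetilde Y$. Thus the corresponding $n$ vertices of $\link(q)$ are pairwise adjacent, so by the flag (npc) condition they span an $(n-1)$-simplex; equivalently there is an $n$-cube $c'$ of $\widetilde Y$ with this corner, and $\tilde\varphi(c')=c$ by uniqueness of the cube spanning a given corner in the CAT(0) complex $\widetilde X$. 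Hence $c\subset Z$, and Remark~\ref{rmk:combconv} yields convexity.

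I expect the main obstacle to be the key lemma, and specifically the claim that the disk $D$ can be pulled back through $\tilde\varphi$: this is the one place where the full strength of the local-isometry hypothesis (rather than mere local injectivity) is used, and making the square-by-square lifting precise -- arguing by induction on the number of $2$-cubes of $D$, at each stage lifting an outermost square whose corner lies over $\gamma$, using the corner condition of the local isometry together with item~(\ref{it:cat5}) of Theorem~\ref{thm:cat0} and the flag condition -- is the technical heart of the argument. Everything else is bookkeeping with the separation and product structure of hyperplanes recorded in Theorem~\ref{thm:cat0}.
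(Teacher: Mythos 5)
Your proposal and the paper's proof differ sharply in scope, and the comparison matters: the paper proves \emph{only} conclusion~\eqref{it:isom1}, taking conclusion~\eqref{it:isom2} as given, and explicitly defers~\eqref{it:isom2} and~\eqref{it:isom3} to the disc-diagram techniques of~\cite{WiseCBMS2012}. Your derivation of~\eqref{it:isom1} from~\eqref{it:isom2} (equivariance of $\tilde\varphi$ under $\varphi_*$ plus freeness of the deck action) is the paper's argument in different clothing and is correct. Your derivation of~\eqref{it:isom3} from~\eqref{it:isom2} -- lifting the corner edges through the injective map, applying the corner condition pairwise, invoking flagness of $\link(q)$ and uniqueness of the cube spanned by a corner, then Remark~\ref{rmk:combconv} -- is also correct, and is a genuine addition relative to the paper. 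So the entire weight of your proposal rests on the key lemma (images of geodesics are geodesics), which is exactly the part the paper declines to prove.

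And there the proposal has a genuine gap. The step ``pull $D$ back through the local isometry square by square'' is not available: $\tilde\varphi$ is a local isometry, not a covering, so it has no lifting property for discs, and -- worse -- most of $D$ does not lie over the image of $\tilde\varphi$ at all. The path $h$ runs inside the carrier of $\bar H$, and the squares of $D$ incident to $h$ need not meet $\tilde\varphi(\widetilde Y)$, so over that part of $D$ there is literally nothing to pull back. Making your outline rigorous (minimal-area diagrams, outermost corners over $\gamma$, an area-reducing induction) \emph{is} the disc-diagram machinery the paper says it is avoiding; announcing it as a plan is not a proof. The gap can, however, be closed using only tools already in the paper, with no discs at all: induct on $|\gamma|$. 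If $\tilde\varphi(\gamma)=\bar e_1\cdots\bar e_n$ is not a geodesic then, by the inductive hypothesis applied to $e_1\cdots e_{n-1}$ and $e_2\cdots e_n$ together with Lemma~\ref{lem:geodesic}, only $\bar e_1$ and $\bar e_n$ are dual to a common hyperplane $\bar H$. By induction $\bar e_2\cdots\bar e_{n-1}$ is a geodesic; its endpoints lie in the copy $\bar H\times\{0\}$ of the carrier boundary on the side containing it, and $\bar H\times\{0\}$ is convex: a geodesic with endpoints there stays in $N(\bar H)$ by Theorem~\ref{thm:cat0}\eqref{it:cat3}, and cannot use an edge dual to $\bar H$, since it would then have to cross $\bar H$ twice, contradicting Lemma~\ref{lem:geodesic}. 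Hence $\bar e_2\subset\bar H\times\{0\}$, so $\bar e_1,\bar e_2$ span the corner of a square of $N(\bar H)$, and the corner condition gives a square $s$ of $\widetilde Y$ with corner $e_1,e_2$. Let $g,f$ be the edges of $s$ opposite to $e_2,e_1$ respectively; then $g\,f\,e_3\cdots e_n$ has the same endpoints and length as $\gamma$, hence is a geodesic, so its subpath $f\,e_3\cdots e_n$ is a geodesic of length $n-1$ whose image has first and last edges dual to $\bar H$ -- contradicting the inductive hypothesis. (The case $n=2$ follows from uniqueness of the edge dual to $\bar H$ through a given vertex of $N(\bar H)$, plus local injectivity of $\tilde\varphi$.) With the key lemma established this way, the rest of your proposal goes through as written.
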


\begin{proof}

Demostraremos solamente la \'ultima parte del teorema, pues las primeras dos partes requieren usar propiedades de diagramas de disco que no hemos discutido en este art\'iculo (el lector puede consultar~\cite{WiseCBMS2012} si quiere aprender sobre diagramas de disco). Si $\sigma \rightarrow Y$ no es nullhomot\'opico en $Y$, entonces se levanta a un camino $\tilde \sigma \rightarrow \widetilde Y$ que no se cierra. Como $\varphi$ es una isometr\'ia local,  la parte~\eqref{it:isom2} del teorema implica que $\tilde \varphi(\tilde \sigma) \rightarrow \widetilde X$ no puede ser un camino cerrado, y por lo tanto, $\varphi(\sigma) \rightarrow X$ no es nullhomot\'opico. 
\end{proof}

\begin{corollary} El teorema~\ref{thm:local} tiene varias consecuencias inmediatas:
\begin{enumerate}
\item Si $\widehat X \rightarrow X$ es un mapeo cubriente, entonces $\pi_1 \widehat X < \pi_1 X$.
\item Si $H$ es un hiperplano de $X$, entonces $\pi_1 H$ es isomorfo a un subgrupo de $\pi_1X$.
\item Para toda superficie $S$ con $\chi(S) <0$ existe un subgrupo libre no abeliano $F_2 < \pi_1 S$.
\item Si $\Gamma'$ es una subgr\'afica plena de $\Gamma$, entonces $G(\Gamma') < G(\Gamma)$.
\end{enumerate}
\end{corollary}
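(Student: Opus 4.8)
The plan is to read all four statements off the injectivity on fundamental groups, part~\eqref{it:isom1} of Theorem~\ref{thm:local}, by producing in each case an appropriate local isometry and computing the fundamental group of its source.

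Statement (1) is immediate: a covering map $\widehat X\to X$ of npc cube complexes was recorded as an example of a local isometry right after the definition, so Theorem~\ref{thm:local}\eqref{it:isom1} applies directly and gives $\pi_1\widehat X\hookrightarrow\pi_1 X$. For statement (2) I would invoke Lemma~\ref{lem:isosop}, which says the carrier inclusion $N(H)\looparrowright X$ is a local isometry, whence $\pi_1 N(H)\hookrightarrow\pi_1 X$. It then suffices to identify $\pi_1 N(H)$ with $\pi_1 H$: collapsing, in each cube $c$ meeting $H$, the interval transverse to the central cube $c\cap H$ gives compatible maps that assemble into a deformation retraction $N(H)\to H$ (a genuine product retraction when $H$ is two-sided, a twisted interval-bundle collapse when $H$ is one-sided). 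Thus $\pi_1 H\cong\pi_1 N(H)\hookrightarrow\pi_1 X$.

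For statement (4) I would take the Salvetti complexes $R'$ and $R$ of $G(\Gamma')$ and $G(\Gamma)$ built in Theorem~\ref{thm:salvettis}, each with a single vertex, and consider the combinatorial map $R'\to R$ induced by the vertex inclusion $\Gamma'\hookrightarrow\Gamma$. The only thing to verify is that this map is a local isometry. A square corner at the vertex of $R'$ is exactly a pair of commuting generators, i.e.\ an edge of $\Gamma'$, and such a corner spans a square in $R$ precisely when the two vertices are adjacent in $\Gamma$. Here the hypothesis that $\Gamma'$ is \emph{full} is decisive: it forces any edge of $\Gamma$ between two vertices of $\Gamma'$ to already lie in $\Gamma'$, so the square-corner condition transfers back to $R'$ and local injectivity on the (flag) links follows as well. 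Theorem~\ref{thm:local}\eqref{it:isom1} then yields $G(\Gamma')=\pi_1 R'\hookrightarrow\pi_1 R=G(\Gamma)$.

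Statement (3) is the one I expect to require real work. Fixing an npc cubulation of $S$ via Theorem~\ref{thm:surface-cube}, I would use $\chi(S)<0$ to locate an essential pair of pants $\Sigma\subset S$ (any curve of a pants decomposition furnishes one), so that $\pi_1\Sigma\cong F_2$ and $\Sigma$ is incompressible. The goal is then to realize $\Sigma$, after passing to the barycentric subdivision if necessary, as a subcomplex satisfying the corner condition of Remark~\ref{rmk:combconv}: whenever a square corner of $S$ lies in $\Sigma$, its whole square does. This makes $\Sigma$ convex and makes the inclusion $\Sigma\hookrightarrow S$ a local isometry, so Theorem~\ref{thm:local}\eqref{it:isom1} gives $F_2\cong\pi_1\Sigma\hookrightarrow\pi_1 S$. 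The main obstacle is precisely promoting a topologically embedded subsurface to a genuinely convex (corner-closed) subcomplex; if this proves awkward for a given cubulation, I would instead immerse a two-petal rose (a figure-eight graph, with $\pi_1\cong F_2$) by choosing two based loops whose incident edges never span a square corner — a graph has no squares, so this single condition is all that is needed for the immersion to be a local isometry.
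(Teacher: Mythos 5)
Your proposal follows exactly the paper's proof: part (1) via covering maps being local isometries, part (2) via Lemma~\ref{lem:isosop} together with Theorem~\ref{thm:local}, and parts (3) and (4) by producing local isometries $Y \rightarrow X$ where $Y$ has free fundamental group (resp.\ is the Salvetti complex of the full subgraph) --- which is precisely the route the paper states and then leaves as an exercise. Your additional details (the interval-bundle retraction $N(H)\rightarrow H$ identifying $\pi_1 N(H)$ with $\pi_1 H$ in (2), the use of fullness to verify the square-corner condition in (4), and the square-corner-avoiding graph immersion as the clean way to realize $F_2$ in (3)) are correct completions of those exercises and do not depart from the paper's approach.
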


\begin{proof}
La primera parte se sigue de que los mapeos cubrientes son isometr\'ias locales (claro que sabemos por topolog\'ia algebraica que este resultado es cierto en general, y que de hecho es una correspondencia biyectiva).
La segunda parte se sigue del lema~\ref{lem:isosop} y del teorema~\ref{thm:local}. 
Para la tercera y la cuarta parte, hay que encontrar isometr\'ias locales $Y \rightarrow X$, donde $Y$ tiene grupo fundamental libre o es el gaar correspondiente a una subgr\'afica plena, y $X$ es un complejo cubular npc con $\pi_1 X \cong \pi_1 S$ o $X$ es el complejo de Salvetti asociado a $G(\Gamma)$. Esto queda como ejercicio.
\end{proof}

\begin{lemma}\label{lem:convexisnpc} Si $X$ es un complejo cubular CAT(0) y $Y \subset X$ es un subcomplejo convexo, entonces $Y$ es CAT(0).
\end{lemma}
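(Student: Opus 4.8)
The plan is to unwind the definition of CAT(0): since a CAT(0) cube complex is exactly an NPC cube complex that is simply connected, I would prove the lemma in two stages, first showing that $Y$ is NPC (every vertex link is flag) and then that $Y$ is simply connected. The engine for both stages is a single \emph{corner-filling} property of convex subcomplexes, which I would isolate as a preliminary claim: if $e_0, \dots, e_n$ are $1$-cubes of $Y$ sharing a $0$-cube $v$ and pairwise spanning squares of $Y$, then the $(n+1)$-cube of $X$ that they span already lies in $Y$. This cube exists in $X$ because $X$ is CAT(0), hence NPC, so $\link(v)$ is flag and the pairwise-adjacent edges fill to a simplex; the claim is essentially the converse of the sufficient condition recorded in Remark~\ref{rmk:combconv}, now extracted from convexity itself.

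To prove the corner-filling claim I would argue in three steps. Label the vertices of the spanned cube $c$ by subsets $S \subseteq \{0, \dots, n\}$, where $x_S$ is the corner reached from $v = x_\emptyset$ by traversing the edges indexed by $S$. First I show every $x_S \in Y$ by induction on $|S|$: the cases $|S| \le 2$ are given (the $e_i$ and the pairwise squares lie in $Y$), and for $|S| \ge 3$ I pick $i \neq j$ in $S$ and consider the vertices $x_{S \setminus \{i\}}$ and $x_{S \setminus \{j\}}$, which lie in $Y$ by induction and are at distance $2$. The two length-$2$ geodesics between them are geodesics by Lemma~\ref{lem:geodesic}, since each traverses the two distinct hyperplanes dual to $e_i$ and $e_j$; one passes through $x_{S \setminus \{i,j\}}$ and the other through $x_S$, and convexity forces \emph{both} of them into $Y$, giving $x_S \in Y$. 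Second, every edge of $c$ joins adjacent vertices of $Y$ and is the unique geodesic between them, so convexity places all edges of $c$ in $Y$. Third, I climb the skeleta using fullness of $Y$: once the boundary of a face lies in $Y$ that face lies in $Y$, so inductively every proper face, and finally $\partial c$, lies in $Y$, whence $c \subset Y$.

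The NPC conclusion is then immediate: $\link_Y(v)$ is a subcomplex of the flag (hence simplicial) complex $\link_X(v)$, so it is simplicial, and if vertices of $\link_Y(v)$ corresponding to edges $e_0, \dots, e_n$ are pairwise adjacent---i.e.\ pairwise span squares of $Y$---then corner-filling produces the $(n+1)$-cube of $Y$ realizing the $n$-simplex they should bound. Hence $\link_Y(v)$ is flag for every $v$, so $Y$ is NPC.

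Finally, for simple connectivity I would show that the inclusion $\iota \colon Y \hookrightarrow X$ is a local isometry and invoke Theorem~\ref{thm:local}. The map $\iota$ is combinatorial and injective, so the only remaining condition is the square condition: if two $1$-cubes $e, f$ at $v \in Y$ span a square in $X$, they span one in $Y$. But this is precisely the $n = 1$ case of corner-filling, where the remaining vertex and edges of the square are forced into $Y$ by the same two-geodesics-plus-fullness argument. With $Y$ now known to be NPC, Theorem~\ref{thm:local}\eqref{it:isom1} gives that $\iota_* \colon \pi_1 Y \to \pi_1 X$ is injective; since $X$ is CAT(0) and therefore simply connected, $\pi_1 X = 1$ forces $\pi_1 Y = 1$. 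Thus $Y$ is NPC and simply connected, i.e.\ CAT(0). I expect the main obstacle to be the first step of the corner-filling claim---promoting ``the edges pairwise span squares in $Y$'' to ``\emph{all} vertices of the spanned cube lie in $Y$''---where the crucial and slightly delicate point is that convexity as defined quantifies over \emph{every} geodesic between two points, so the second of the two length-$2$ geodesics (the one through $x_S$) is genuinely trapped in $Y$.
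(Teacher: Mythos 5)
Your proof is correct and takes essentially the same route as the paper: establish that $Y$ is npc because its vertex links are full (hence flag) subcomplexes of the flag links of $X$, and obtain simple connectivity by noting that the inclusion $Y \hookrightarrow X$ is a local isometry and invoking the $\pi_1$-injectivity of Theorem~\ref{thm:local} together with $\pi_1 X = 1$. The only difference is level of detail: your corner-filling claim (the two diagonal length-$2$ geodesics plus fullness) is a careful justification of precisely the two facts the paper asserts without proof, namely that convexity forces links of $Y$ to be full in links of $X$ and forces the inclusion to satisfy the square condition of a local isometry.
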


\begin{proof}
Como $Y \subset X$ es un subcomplejo convexo, entonces $Y$ es un subcomplejo pleno de $X$ y es npc porque las aureolas de v\'ertices en $Y$ son restricciones de las aureolas correspondientes en $X$ y las aureolas en $Y$  son subcomplejos plenos de las aureolas en $X$; $Y$ es simplemente conexo porque la inclusi\'on $i:Y \rightarrow X$ es una isometr\'ia local por la conclusi\'on~\eqref{it:isom1} del teorema~\ref{thm:local}.
\end{proof}

\section{Cuasiconvexidad y grupos hiperb\'olicos}

En esta secci\'on nos desviaremos ligeramente del contexto cubular. Le pedimos al lector tener un poco de paciencia: la importancia de este desv\'io quedar\'a clara en las secciones posteriores.

\begin{definition} Sea $G$ un grupo dado por una presentaci\'on $\langle S |R \rangle$. La \emph{gr\'afica de Cayley de $G$ con respecto a $S$} es la gr\'afica $Cay(G,S)$ cuyos v\'ertices son los elementos de $G$, y donde dos v\'ertices $g$ y $g'$ est\'an conectadas por una arista dirigida de $g$ a $g'$ si $g(g')^{-1} \in S$.
\end{definition}

Sea $G$ un grupo finitamente generado y sea $Cay(G,S)$ su gr\'afica de Cayley con respecto a un conjunto finito de generadores $S$;  $Cay(G,S)$  es un espacio m\'etrico con la m\'etrica usual de las gr\'aficas. Ver las figuras~\ref{fig:enteros} y~\ref{fig:free} para algunos ejemplos.

\begin{figure}[h!]
\centerline{\includegraphics[scale=0.45]{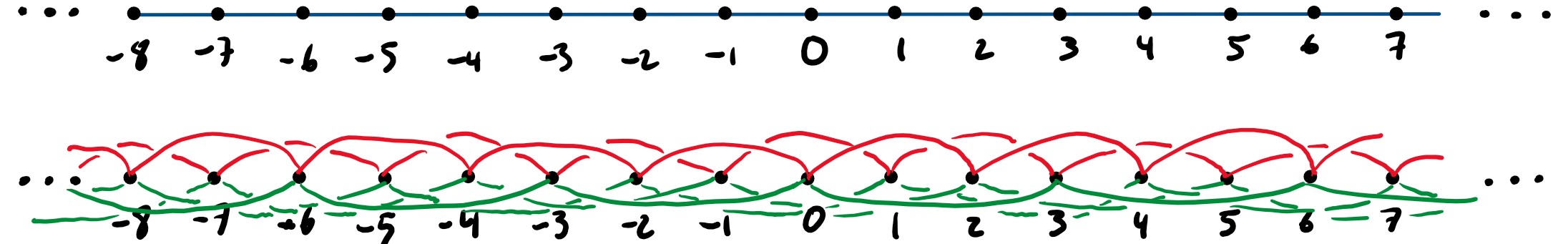}}
\caption{$Cay(\integers,S)$ para $S=\{1\}$ y para $S=\{2,3\}$ (estas son gr\'aficas dirigidas, pero omitimos las orientaciones en el dibujo por simplicidad).}
\label{fig:enteros}
\end{figure}

\begin{figure}[h!]
\centerline{\includegraphics[scale=0.5]{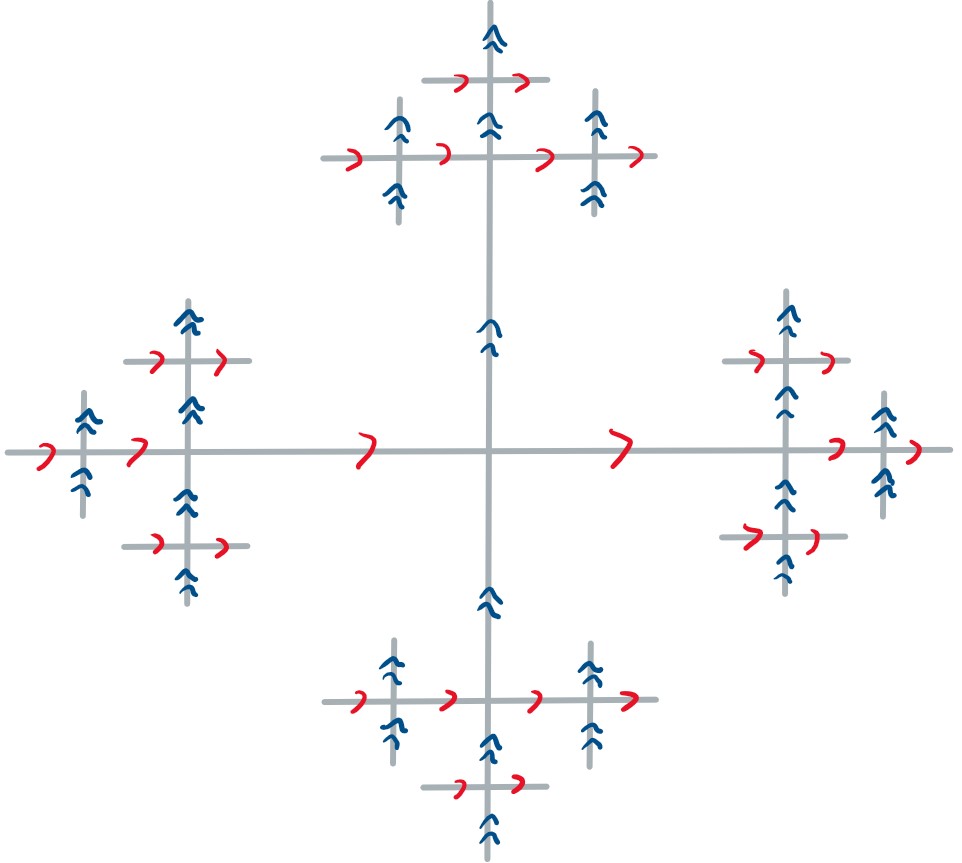}}
\caption{$Cay(F_2,S)$ para $S=\{a,b\}$. Para no atascar el dibujo, usamos las flechas horizontales para denotar a la `direcci\'on' de $a$ (o sea, los elementos de la forma $a^{\pm n}$ y  sus trasladados por las potencias de $b$) y  las flechas verticales para denotar a la direcci\'on de $b$ (o sea, los elementos de la forma $b^{\pm n}$ y sus trasladados por las potencias de $a$).}
\label{fig:free}
\end{figure} 

\begin{definition}\label{def:finetriangle}
Sea $X$ un espacio m\'etrico geod\'esico, un tri\'angulo $\Delta$ en $X$ es \emph{$\delta$-fino} para $\delta \geq 0$ si cada lado de $\Delta$ est\'a contenido en la uni\'on de las $\delta$-vecindades de los otros dos lados.
\end{definition}

\begin{definition}\label{def:hypgroup}
Un grupo finitamente generado $G$ es \emph{hiperb\'olico} si para alg\'un conjunto finito de generadores $S$ se satisface que todo tri\'angulo geod\'esico en  $Cay(G,S)$ es $\delta$-fino. 
\end{definition}

 Aunque la definici\'on de hiperbolicidad depende de $Cay(G,S)$, la propiedad de `ser hiperb\'olico' no depende del conjunto finito de generadores:
 
 \begin{proposition}
 Si $G$ es $\delta$-hiperb\'olico con respecto a $S$ entonces para todo conjunto finito de generadores $S'$, existe $\delta'$ para la cual $G$ es $\delta'$-hiperb\'olico con respecto a $S'$.
 \end{proposition}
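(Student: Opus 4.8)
The plan is to reduce the statement to the fact that hyperbolicity is a quasi-isometry invariant among geodesic metric spaces, together with the elementary observation that any two Cayley graphs of $G$ with respect to finite generating sets are quasi-isometric to one another.

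First I would compare the two word metrics. Write $d_S$ and $d_{S'}$ for the path metrics on $Cay(G,S)$ and $Cay(G,S')$. Since $S$ and $S'$ are both finite and both generate $G$, every $s' \in S'$ can be written as an $S$-word of length at most $M := \max_{s'\in S'} |s'|_S$, and symmetrically every $s \in S$ is an $S'$-word of length at most $N := \max_{s\in S} |s|_{S'}$. Substituting these expressions into any geodesic word gives, on the common vertex set $G$, the two-sided comparison $\tfrac{1}{M}\, d_S \leq d_{S'} \leq N\, d_S$. Hence the identity map on $G$, viewed as a map $Cay(G,S') \to Cay(G,S)$, is a $(\lambda,\lambda)$-quasi-isometry with $\lambda = \max\{M,N\}$; it is coarsely bi-Lipschitz and surjective onto the vertices, so it is a genuine quasi-isometry of geodesic spaces.

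The heart of the argument is then the following: if $Y$ is a geodesic space, $X$ is $\delta$-hyperbolic, and $f:Y\to X$ is a $(\lambda,\lambda)$-quasi-isometry, then $Y$ is $\delta'$-hyperbolic for some $\delta'$ depending only on $\delta$ and $\lambda$. To prove it I would first establish the \emph{Morse lemma} (stability of quasi-geodesics): in a $\delta$-hyperbolic space there is a constant $R = R(\delta,\lambda)$ so that any $(\lambda,\lambda)$-quasi-geodesic lies in the $R$-neighbourhood of a geodesic with the same endpoints, and conversely. Granting this, take a geodesic triangle $\Delta$ in $Y$ with sides $\alpha,\beta,\gamma$. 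Its image has sides $f(\alpha),f(\beta),f(\gamma)$, which are quasi-geodesics in $X$; by the Morse lemma each fellow-travels an honest geodesic, yielding a genuine geodesic triangle $\bar\Delta$ in $X$ whose sides stay within distance $R$ of the corresponding $f(\alpha),f(\beta),f(\gamma)$. Since $\bar\Delta$ is $\delta$-thin, every point of $f(\alpha)$ lies within $\delta + 2R$ of $f(\beta)\cup f(\gamma)$; transporting this back through a coarse inverse of $f$ and absorbing the additive and multiplicative errors produces a uniform $\delta'$ bounding the thinness of $\Delta$. Applying the conclusion with $Y = Cay(G,S')$, $X = Cay(G,S)$, and $f$ the identity map finishes the proof.

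I expect the Morse lemma to be the main obstacle, as it is the one genuinely non-formal step: a priori a quasi-geodesic may backtrack on arbitrarily large scales, so one first replaces it by a tame (continuous, piecewise-geodesic) representative with controlled length, bounds the length of such a representative between two points in terms of their distance by a divide-and-conquer estimate, and then uses $\delta$-thinness inductively to push it uniformly close to the geodesic. Everything before and after that lemma is just bookkeeping of the quasi-isometry constants.
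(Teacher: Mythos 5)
Your proposal is correct, and it is the standard argument. Note that the paper itself does not prove this proposition: it states it as a known fact, and only later records (also without proof) the remark that hyperbolicity is a quasi-isometry invariant of Cayley graphs. Your proof fills in exactly the chain the paper gestures at --- the bi-Lipschitz comparison of the two word metrics on the common vertex set $G$, making the identity a quasi-isometry $Cay(G,S') \to Cay(G,S)$, followed by quasi-isometry invariance of hyperbolicity via the Morse lemma --- and you correctly identify the Morse lemma (stability of quasi-geodesics) as the only non-formal ingredient; the rest is bookkeeping of constants, as you say.
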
 
 
 En vista de la proposici\'on anterior, tiene sentido hablar de grupos hiperb\'olicos, y de hiperbolicidad en general, sin hacer referencia expl\'icita a un conjunto de generadores ni a una $\delta$ espec\'ifica.

La noci\'on de hiperbolicidad para grupos se le debe a Gromov~\cite{Gromov87}. Por esta raz\'on, es posible que el lector encuentre que en la literatura a veces se le llama \emph{Gromov-hiperbolicidad} a la propiedad en la definici\'on~\ref{def:hypgroup}. Se\~nalamos adem\'as que  aunque en este texto solamente definimos hiperbolicidad para grupos (o equivalentemente, para sus gr\'aficas de Cayley), la definici\'on tambi\'en tiene sentido (y resulta muy \'util) para espacios m\'etricos geod\'esicos en general.

\begin{example} Los grupos libres finitamente generados son hiperb\'olicos (con $\delta=0$). Los grupos fundamentales de superficies cerradas con caracter\'istica de Euler negativa tambi\'en son hiperb\'olicos~\cite{MaxDehnBook}. Los grupos fundamentales de variedades compactas cerradas que admiten una m\'etrica de Riemann negativamente curvada tambi\'en son hiperb\'olicos.
\end{example}

\begin{figure}[h!]
\centerline{\includegraphics[scale=0.5]{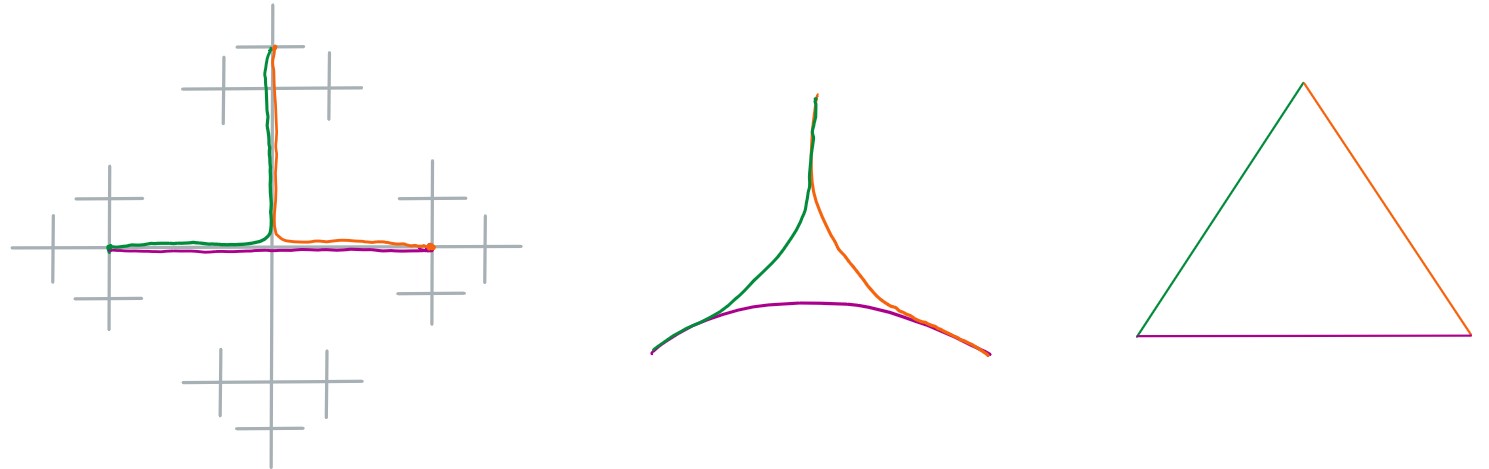}}
\caption{Ejemplos gen\'ericos de tri\'angulos geod\'esicos en $Cay(F_2,\{a,b\})$, en $\mathbb{H}^2$ y en  $\mathbb{E}^2$.}
\label{fig:hyperbolic}
\end{figure}

Otra clase amplia de ejemplos de grupos hiperb\'olicos surge de la noci\'on de cancelaciones peque\~nas; para definir los grupos de cancelaciones peque\~nas de manera rigurosa, necesitamos algunas nociones preliminares. 

Sea $\mathcal{P}=\langle S |R \rangle$ una presentaci\'on y sean $r,r' \in R$.  Si $r$ y $r'$ se pueden escribir de manera no trivial como concatenaciones $r=uv$ y $r'=uw$ decimos que $u$ es una \emph{pieza de $r$} (n\'otese que en este caso $u$ tambi\'en es una pieza de $r'$). 
Decimos que un conjunto de relaciones $R$ est\'a \emph{simetrizado} si $R$ es cerrado bajo tomar permutaciones c\'iclicas e inversos. Un conjunto de relaciones $R$ est\'a \emph{c\'iclicamente reducido} si para todo $r \in R$, si $r$ se puede expresar como una concatenaci\'on $r=wabw'$ o $r=awb$ entonces se satisface que $a \neq b^{-1}$.

\begin{definition} 
Decimos que una presentaci\'on c\'iclicamente reducida y simetrizada $\mathcal{P}$ \emph{satisface la condici\'on $C'(\frac{1}{n})$ de cancelaciones peque\~nas} si se satisface lo siguiente: si $u$ es una pieza de $r$, entonces $|u| < \frac{1}{n}|r|$. Decimos que $\mathcal{P}$  \emph{satisface la condici\'on $C(n)$ de cancelaciones peque\~nas} si siempre que $r$ se puede expresar como una concatenaci\'on de piezas $r=u_1\ldots u_k$, entonces $k \geq n$.
Decimos que un grupo $G$ satisface $C'(\frac{1}{n})$ o $C(n)$ si admite una presentaci\'on que satisface $C'(\frac{1}{n})$ o $C(n)$ respectivamente.
\end{definition}

Para mantener la notaci\'on compacta, la convenci\'on usual es que cuando hablamos de cancelaciones peque\~nas en una presentaci\'on, siempre asumimos que esta presentaci\'on est\'a simetrizada: no escribimos las relaciones `redundantes', o sea, las que se pueden deducir de otras relaciones.

La condici\'on $C'(\frac{1}{n})$ com\'unmente se conoce como la condici\'on \emph{m\'etrica} de cancelaciones peque\~nas  y la condici\'on $C(n)$ com\'unmente se conoce como la condici\'on \emph{no m\'etrica} de cancelaciones peque\~nas. Cabe se\~nalar que si una presentaci\'on $\mathcal{P}$ satisface $C'(\frac{1}{n})$ entonces satisface $C(n+1)$, pero que el inverso no se cumple: para todo $n \in \naturals$ y para todo $n >1$ existen presentaciones que satisfacen $C(n)$ pero no $C'(\frac{1}{n'})$. Encontrar ejemplos concretos queda como un ejercicio para el lector.

\begin{example}\label{ex:freecanc}
Como no tienen relaciones en sus presentaciones usuales, los grupos libres autom\'aticamente satisfacen las condiciones $C'(\frac{1}{n})$ y $C(n)$ para todo $n \in \naturals$.
\end{example}

\begin{example}\label{ex:surface}
Dada una superficie cerrada de g\'enero $g \geq 2$, la presentaci\'on `usual' de su grupo fundamental $\langle a_{1},b_{1},\dots ,a_{g},b_{g}\mid [a_{1},b_{1}]\cdot \dots \cdot [a_{g},b_{g}]\rangle$ satisface la condici\'on $C'(\frac{1}{7})$ de cancelaciones peque\~nas, porque todas las piezas en la presentaci\'on tienen longitud $1$. Si $g=1$, esta es la presentaci\'on usual del grupo fundamental del toro, y solamente satisface la condici\'on $C'(\frac{1}{3})$ de cancelaciones peque\~nas.
\end{example}

\begin{theorem}Los grupos finitamente presentados que satisfacen la condici\'on de cancelaciones peque\~nas $C'(\frac{1}{6})$ o $C(7)$  son hiperb\'olicos.
\end{theorem}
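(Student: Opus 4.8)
The plan is to prove the theorem through the combinatorial characterisation of hyperbolicity by a \emph{linear isoperimetric inequality}: a finite presentation $\mathcal{P}=\langle S\mid R\rangle$ defines a hyperbolic group exactly when there is a constant $K$ such that every null-homotopic word $w$ bounds a \emph{diagrama de disco} (van Kampen diagram) $D$ with $\area(D)\le K\,|w|$. I would take this equivalence as a known black box (it is the combinatorial form of Gromov's theorem, obtained from the ``thin triangles'' definition of Definici\'on~\ref{def:hypgroup} and its converse) and reduce the statement to producing such a $K$. Since the text already records that $C'(\tfrac{1}{6})$ implies $C(7)$, it suffices to treat the non-metric case $C(7)$.

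First I would fix a null-homotopic $w$ and a \emph{reduced} disc diagram $D$ over $\mathcal{P}$ with boundary word $w$. Reducedness guarantees that no two $2$-cells sharing an edge carry mutually inverse relators, so every arc along which two $2$-cells of $D$ meet is a common subword of two distinct elements of the symmetrized $R$, i.e. a \emph{pieza}. After suppressing the valence-$2$ vertices (absorbing them into edges), every interior vertex satisfies $\valence(v)\ge 3$, and, reading the boundary of an interior $2$-cell as a concatenation of the pieces it shares with its neighbours, the $C(7)$ hypothesis forces that cell to have at least $7$ sides. Establishing these two facts, and that passing to branch vertices preserves the piece count, is the combinatorial heart of the argument; together they encode that $D$ behaves like a region of the negatively curved $(3,7)$-tessellation.

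Next I would apply the combinatorial Gauss--Bonnet theorem $\sum_v \curvature(v)+\sum_f \curvature(f)=2\pi\,\euler(D)=2\pi$, using the angle assignment that gives each corner at a vertex of valence $d$ the angle $2\pi/d$, and the conventions $\curvature(v)=2\pi-\theta_v$ at interior vertices, $\curvature(v)=\pi-\theta_v$ at boundary vertices, and $\curvature(f)=(2-k)\pi+\theta_f$ for a face with $k$ sides and corner-angle sum $\theta_f$. Then every interior vertex is flat, $\curvature(v)=0$, while an interior face with $k\ge 7$ sides has corner angles summing to at most $k\cdot\tfrac{2\pi}{3}$, whence $\curvature(f)\le 2\pi-\tfrac{k\pi}{3}\le -\tfrac{\pi}{3}$ is bounded away from $0$. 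Since the total curvature is $2\pi$, each of the at most $|w|$ boundary vertices contributes at most $\pi$, and each of the at most $|w|$ boundary faces contributes a bounded amount, the inequality $2\pi\le C\,|w|-\tfrac{\pi}{3}\,(\#\text{interior faces})$ bounds the interior faces, and hence $\area(D)$, linearly in $|w|$; this is the sought linear isoperimetric inequality and therefore hyperbolicity.

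The hard part will be the bookkeeping for reduced diagrams: checking rigorously that adjacent cells meet in genuine pieces (where reducedness is essential), that suppressing valence-$2$ vertices does not spoil the $C(7)$ count, and that the boundary terms in Gauss--Bonnet are truly bounded per boundary vertex once spurs, degenerate cells, and cells meeting the boundary in several arcs are accounted for. Here I would lean on the structure supplied by Greendlinger's lemma to control boundary cells. The other load-bearing assumption is the equivalence between $\delta$-thin triangles and the linear isoperimetric inequality; in an expository treatment I would cite it rather than reprove it.
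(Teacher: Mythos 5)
The paper gives no proof of this theorem: it is stated as a known result, in line with the article's announced policy of omitting proofs that are too technical for its expository scope, so there is no internal argument to compare yours against. On its own merits, your proposal follows the standard and essentially correct route: reduce $C'(\frac{1}{6})$ to $C(7)$ (as the paper's remark on $C'(\frac{1}{n})\Rightarrow C(n+1)$ permits), characterize hyperbolicity by the linear isoperimetric inequality (a deep theorem of Gromov, legitimately cited as a black box in this setting), pass to a reduced --- e.g.\ minimal-area --- van Kampen diagram so that every interior arc is a piece, and run combinatorial Gauss--Bonnet with the angle $2\pi/d$ at corners of valence-$d$ vertices. Your curvature accounting checks out: interior vertices are flat; an interior face has all its corners at vertices of valence at least $3$ (note that a boundary vertex touching an interior face automatically has valence $\geq 4$), so $\kappa(f)\le 2\pi-k\pi/3\le-\pi/3$ once $k\ge 7$; boundary vertices and faces containing boundary edges each contribute $O(1)$ and number at most $|w|$, which bounds the number of interior faces, hence the area, linearly in $|w|$. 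Two caveats. First, the bookkeeping you flag is genuinely where the content lies: for non-simple diagrams (spurs, cut vertices, faces meeting $\partial D$ in several arcs) the vertex formula $\kappa(v)=\pi-\theta(v)$ must be replaced by the general one $\kappa(v)=2\pi-\pi\chi(\link(v))-\theta(v)$, or one argues separately on the maximal simple subdiagrams; this is exactly the ladder/fan analysis in the literature. Second, leaning on Greendlinger's lemma to control boundary cells is mildly circular, since Greendlinger's lemma is itself standardly deduced from this very curvature argument; your Gauss--Bonnet estimate already bounds the boundary contribution, so that crutch should be dropped.
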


La siguiente noci\'on, aunque central en teor\'ia geom\'etrica de grupos, no ser\'a utilizada de manera expl\'icita en lo que sigue del texto. Consideramos pertinente inclu\'irla para darle m\'as contexto al lector, y por su relevancia en algunos ejemplos.

\begin{definition}\label{def:qi}
Sean $(X_1, d_1), (X_2, d_2)$ espacios m\'etricos. Una funci\'on $f : X_1 \rightarrow X_2$ es un \emph{$(A,B)$-encaje cuasi-isom\'etrico}  si existen constantes $A \geq 1$ y $B \geq 0$ tales que para cualesquiera $x, y \in X_1$ se tiene:
$$\frac{1}{A}d_1(x, y) - B \leq d_2(f(x), f(y)) \leq Ad_1(x, y) + B.$$
Si $f : X_1 \rightarrow X_2$ es un encaje cuasi-isom\'etrico, decimos que $f$ es una \emph{$(A,B)$-cuasi-isometr\'ia} si existe una constante $C>0$ con la propiedad de que para todo $x_2 \in X_2$ existe $x_1 \in X_1$ tal que $d_2(f(x_1), x_2) < C$.
\end{definition}

\begin{remark} La hiperbolicidad en el sentido de la definici\'on~\ref{def:hypgroup} es un invariante cuasi-isom\'etrico: es decir, si $Cay(G,S)$ y $Cay(G',S')$ son gr\'aficas de Cayley cuasi-isom\'etricas y $Cay(G,S)$ es hiperb\'olico, entonces $Cay(G',S')$ tambi\'en lo es.
\end{remark}

Como en el caso de la definici\'on de hiperbolicidad, al hablar de cuasi-isometr\'ias y encajes cuasi-isom\'etricos omitimos hacer menci\'on de las constantes a menos de que estas sean utilizadas de manera expl\'icita. 

\begin{definition}
Un subgrupo $H$ de un grupo $G$ es \emph{cuasiconvexo} si existe $m > 0$ tal que lo siguiente se cumple para todas las geod\'esicas $\sigma$ en  $Cay(G,S)$: si los extremos de $\sigma$ yacen en $H$ entonces $\sigma \subset N_m(H)$.
\end{definition}

Para un subgrupo $H$, ser cuasiconvexo, en general, depende de la elecci\'on de los generadores. Sin embargo, cuando $G$ es hiperb\'olico, ser cuasiconvexo es un invariante del subgrupo, y es equivalente a ser finitamente generado y estar cuasi-isometricamente encajado. Constatamos esto en la siguiente proposici\'on:

\begin{proposition}
Si $G$ es hiperb\'olico y $H$ es un subgrupo de $G$ que es cuasiconvexo en $Cay(G,S)$, entonces $H$ es cuasiconvexo en $Cay(G,S')$ para todo conjunto finito de generadores $S' \subset G$.  
\end{proposition}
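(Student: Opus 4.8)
The plan is to compare the two word metrics directly and then transport quasiconvexity across the resulting quasi-isometry. First I would fix the identity map on vertices $\mathrm{id}\colon Cay(G,S)\to Cay(G,S')$ and observe that, because $S$ and $S'$ are finite, there is a constant $A\ge 1$ such that every element of $S$ is a word of $S'$-length at most $A$ and every element of $S'$ is a word of $S$-length at most $A$. This yields $\frac{1}{A}d_S(g,h)\le d_{S'}(g,h)\le A\,d_S(g,h)$ for all $g,h\in G$, so $\mathrm{id}$ is an $(A,0)$-quasi-isometr\'ia in the sense of Definition~\ref{def:qi}. In particular, by the remark that hiperbolicidad is a quasi-isometry invariant, the fact that $Cay(G,S)$ is $\delta$-hyperbolic forces $Cay(G,S')$ to be hyperbolic as well; I will carry out the geometric argument inside $Cay(G,S)$, using its hyperbolicity constant $\delta$.

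Next I would take an arbitrary $S'$-geod\'esica $\sigma'$ whose endpoints $a,b$ lie in $H$ and reinterpret it as a path in $Cay(G,S)$. The bi-Lipschitz estimate above shows, after parametrizing $\sigma'$ by its $S'$-arclength, that $\sigma'$ is an $(A,0)$-quasigeod\'esica in $Cay(G,S)$ joining $a$ to $b$. Invoking the stability of quasigeodesics in $\delta$-hyperbolic spaces (the Morse lemma), there is a constant $R=R(\delta,A)$, depending only on $\delta$ and $A$, such that $\sigma'$ lies in the $R$-neighbourhood for $d_S$ of some $S$-geod\'esica $\sigma$ from $a$ to $b$. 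Since $a,b\in H$ and $H$ is cuasiconvexo in $Cay(G,S)$ with constant $m$, we have $\sigma\subset N_m(H)$ for $d_S$, and therefore $\sigma'\subset N_{m+R}(H)$ for $d_S$. Converting back with $d_{S'}\le A\,d_S$ gives $\sigma'\subset N_{A(m+R)}(H)$ for $d_{S'}$. As $\sigma'$ was arbitrary, $H$ is cuasiconvexo in $Cay(G,S')$ with constant $m'=A(m+R)$.

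The main obstacle is the single non-combinatorial ingredient, namely the Morse lemma: quasigeodesics in a $\delta$-hyperbolic space track genuine geodesics with the same endpoints up to a bound depending only on $\delta$ and on the quasigeodesic constants. This is precisely where hyperbolicity enters, and it is the step that cannot be bypassed, since for a non-hyperbolic space an $S'$-geodesic could \emph{a priori} stray arbitrarily far from every $S$-geodesic with the same endpoints, breaking the argument. Everything else is routine bookkeeping with the comparison constant $A$, so once the stability of quasigeodesics is granted — it must be cited or established separately, as it is not proved in this excerpt — the proposici\'on follows.
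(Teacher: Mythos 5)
Your proof is correct and is the standard argument: compare the two word metrics by a constant $A$ coming from the finiteness of $S$ and $S'$, view an $S'$-geod\'esica with endpoints in $H$ as an $(A,0)$-quasigeodesic in $Cay(G,S)$, apply the stability of quasigeodesics (Morse lemma) in the $\delta$-hyperbolic space $Cay(G,S)$, and push the quasiconvexity constant back through the metric comparison. There is nothing in the paper to compare it against --- the proposici\'on is stated there without proof --- so the only caveat is the one you already flag yourself: the Morse lemma is an external ingredient that must be cited (e.g.\ from~\cite{BridsonHaefliger}), and the remaining interpolation details (an $S'$-edge is not an $S$-edge, so consecutive vertices of $\sigma'$ must be joined by $S$-geodesics, turning $(A,0)$ into $(A,B)$ for some bounded $B$) are routine bookkeeping, as you say.
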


Una de las propiedades principales de las isometr\'ias locales es que son $\pi_1$-inyectivas. Es natural preguntarse entonces \textbf{cu\'ales} subgrupos del grupo fundamental de un complejo cubular npc $X$ se pueden realizar por isometr\'ias locales donde el dominio es un complejo cubular compacto. Cuando $\pi_1 X$ es hiperb\'olico, una clase amplia de subgrupos teniendo esta propiedad son los subgrupos cuasiconvexos:

\begin{proposition}[\cite{HaglundGraphProduct}]\label{cor:loci}
Sea $X$ un complejo cubular compacto npc cuyo grupo fundamental es hiperb\'olico. Sea $H < \pi_1X$ un subgrupo cuasiconvexo. Entonces existe una isometr\'ia local $Y \rightarrow X$ donde $Y$ es compacto y $\pi_1 Y = H$. 
\end{proposition}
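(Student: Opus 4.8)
El plan es realizar $Y$ como el cociente por $H$ de la envolvente convexa de una \'orbita de $H$ en el cubriente universal $\widetilde X$. Como $X$ es npc y compacto, $\widetilde X$ es un complejo cubular CAT(0), localmente finito, sobre el cual $G=\pi_1 X$ act\'ua libremente (por la asfericidad de $X$), de manera propiamente discontinua y cocompacta. Por el lema de Milnor--\v{S}varc, la aplicaci\'on \'orbita $g\mapsto g x_0$ asociada a un v\'ertice base $x_0$ es una cuasi-isometr\'ia $G\to\widetilde X^{(1)}$; en particular $\widetilde X$ es hiperb\'olico, pues $G$ lo es, y la hiperbolicidad no depende de cu\'al de las m\'etricas cuasi-isom\'etricas de $\widetilde X$ utilicemos.

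Tomar\'ia entonces la \'orbita $A=H\cdot x_0$. Como $H$ es cuasiconvexo en $G$ y la aplicaci\'on \'orbita es una cuasi-isometr\'ia entre espacios hiperb\'olicos, $A$ es un subconjunto cuasiconvexo de $\widetilde X$. Definir\'ia $C$ como la envolvente convexa combinatoria de $A$, esto es, la intersecci\'on de todos los subcomplejos convexos de $\widetilde X$ que contienen a $A$; dicha intersecci\'on es un subcomplejo convexo y $H$-invariante, pues $H$ preserva $A$. Por el lema~\ref{lem:convexisnpc}, $C$ es CAT(0). Adem\'as, al ser convexo $C$ es en particular pleno; como la inclusi\'on $C\hookrightarrow\widetilde X$ es inyectiva y la condici\'on de que $C$ sea pleno fuerza la condici\'on de esquinas de la definici\'on de isometr\'ia local, dicha inclusi\'on es una isometr\'ia local.

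A continuaci\'on pondr\'ia $Y=C/H$. Como $G$ act\'ua libre y propiamente sobre $\widetilde X$, lo mismo vale para la acci\'on de $H$ sobre $C$, y como $C$ es simplemente conexo (por ser CAT(0)), la proyecci\'on $C\to Y$ es el cubriente universal con grupo de transformaciones $H$; luego $\pi_1 Y\cong H$. La inclusi\'on $H$-equivariante $C\hookrightarrow\widetilde X$ desciende a una aplicaci\'on $Y=C/H\to\widetilde X/G=X$, que resulta una isometr\'ia local porque lo son tanto la inclusi\'on convexa como las proyecciones cubrientes; adem\'as el morfismo inducido en $\pi_1$ es precisamente la inclusi\'on $H\hookrightarrow G$, inyectiva por el teorema~\ref{thm:local}, de modo que la imagen de $\pi_1 Y$ se identifica con $H$.

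El obst\'aculo principal, y el \'unico punto donde interviene la hiperbolicidad, es la compacidad de $Y$. El hecho clave es que en un espacio hiperb\'olico la envolvente convexa de un conjunto cuasiconvexo est\'a contenida en una vecindad de radio acotado $D$ del conjunto, con $D$ dependiendo s\'olo de $\delta$ y de la constante de cuasiconvexidad. Aplicado a $A=H\cdot x_0$ esto da $C\subseteq N_D(A)$. Como $\widetilde X$ es localmente finito, $\bar B(x_0,D)\cap C$ es un subcomplejo finito, y como $H$ act\'ua transitivamente sobre $A$, todo punto de $C$ se traslada mediante alg\'un elemento de $H$ a ese subcomplejo finito; por lo tanto $H$ act\'ua cocompactamente sobre $C$ y $Y=C/H$ es compacto. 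Observemos que sin hiperbolicidad este \'ultimo paso falla, ya que la envolvente convexa de una \'orbita cuasiconvexa puede no ser cocompacta: el enunciado es, pues, genuinamente una propiedad de la geometr\'ia hiperb\'olica.
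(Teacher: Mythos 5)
El art\'iculo no contiene una demostraci\'on de esta proposici\'on: la enuncia y remite a~\cite{HaglundGraphProduct}. Tu propuesta reconstruye esencialmente el argumento de esa referencia (el n\'ucleo convexo de Haglund): tomar la \'orbita $A=H\cdot x_0$ en $\widetilde X$, su envolvente convexa combinatoria $C$, y poner $Y=C/H$. La arquitectura es correcta: la intersecci\'on de subcomplejos convexos que contienen a $A$ es convexa y $H$-invariante, $C$ es CAT(0) por el lema~\ref{lem:convexisnpc}, la acci\'on de $H$ en $C$ es libre y propia por restricci\'on, la teor\'ia de cubrientes identifica $\pi_1 Y$ con $H$, el mapeo descendido $Y\to X$ es una isometr\'ia local, y la hiperbolicidad interviene \'unicamente en la cocompacidad, tal como se\~nalas.

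El punto d\'ebil es justamente el que llamas ``el hecho clave''. Tal como lo enuncias --- en un espacio hiperb\'olico cualquiera, la envolvente convexa de un conjunto cuasiconvexo yace en una vecindad de radio acotado del conjunto --- no es un hecho general citable: el argumento natural (iterar la uni\'on de geod\'esicas entre puntos del conjunto) solo muestra que cada iteraci\'on del ``join'' se aleja a lo m\'as $2\delta$ de la anterior, de modo que las cotas crecen con el n\'umero de iteraciones y no se obtiene una cota uniforme para la envolvente convexa genuina. La versi\'on que s\'i es cierta, y que es el contenido real del lema de Haglund, es espec\'ifica de los complejos cubulares: en un complejo cubular CAT(0) $\delta$-hiperb\'olico con la m\'etrica combinatoria, la envolvente convexa combinatoria de un conjunto $\kappa$-cuasiconvexo de v\'ertices yace en su $R(\delta,\kappa)$-vecindad; la demostraci\'on usa que dicha envolvente es la intersecci\'on de todos los semiespacios (lados de hiperplanos) que contienen al conjunto, y que un v\'ertice suficientemente lejano queda separado del conjunto por alg\'un hiperplano. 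Sin este argumento (o sin citarlo expl\'icitamente como resultado cubular, no como geometr\'ia hiperb\'olica general), el paso central de tu demostraci\'on queda sin justificar. Un detalle menor en la misma direcci\'on: que $C$ sea pleno no basta para la condici\'on de esquinas de la definici\'on de isometr\'ia local (dos aristas adyacentes de un cuadrado forman un subcomplejo pleno cuya inclusi\'on no es isometr\'ia local); lo que se usa es la convexidad, como en la observaci\'on~\ref{rmk:combconv}: el v\'ertice opuesto del cuadrado est\'a en $C$ por yacer en una geod\'esica entre puntos de $C$, y entonces la plenitud rellena el cuadrado.
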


\section{Espacios de paredes, o de c\'omo cubular grupos.}

\subsection{Acciones en complejos CAT(0)}

\begin{definition}
Decimos que un grupo $G$ act\'ua \emph{propiamente} en un espacio m\'etrico $X$ si el conjunto $\{g | gK \cap K\}$ es finito para todo subespacio compacto $K \subset X$. Decimos adem\'as que $G$ act\'ua de manera \emph{m\'etricamente propia} en un espacio m\'etrico $X$ si para cada $r>0$ y para cada $x \in X$ el conjunto $\{g | gB_r(x)\cap B_r(x) \neq \emptyset\}$ es finito. Finalmente, decimos que $G$ act\'ua  \emph{cocompactamente} en $X$ si  existe un subespacio compacto $K \subset X$ tal que
${\displaystyle X=G\cdot K}$.
\end{definition}

 Si una acci\'on es cocompacta, actuar propiamente y actuar de manera m\'etricamente propia son nociones equivalentes, pero este no es el caso m\'as en general.
 Por otra parte, si una acci\'on es propia, ser cocompacto es equivalente a que el espacio cociente ${\displaystyle G\backslash X}$ sea compacto.

\begin{definition}
Un grupo $G$ est\'a \emph{cubulado} si act\'ua de manera m\'etricamente propia en un complejo cubular CAT(0). Si la acci\'on es adem\'as cocompacta, decimos que $G$ est\'a \emph{cocompactamente cubulado}.
\end{definition}

\begin{remark} En este art\'iculo (y en general en teor\'ia geom\'etrica de grupos), cuando un grupo act\'ua en un espacio m\'etrico, \textbf{\textsf{la acci\'on es siempre por isometr\'ias}} a menos de que se especifique lo contrario.
\end{remark}

Algunos ejemplos de grupos cubulados son los siguientes:

\begin{example} Si $X$ es un complejo cubular npc, entonces $\pi_1 X \curvearrowright \widetilde X$ de manera m\'etricamente propia (y cocompactamente si $X$ es finito), as\'i que $\pi_1 X$ est\'a cubulado (y cocompactamente cubulado si $X$ es finito).
\end{example}

\begin{example} Si $G$ satisface la condici\'on $C'(\frac{1}{6})$ de cancelaciones peque\~nas, entonces $G$ est\'a cocompactamente cubulado~\cite{WiseSmallCanCube04}.
\end{example}

\begin{example} Si $G$ es un grupo de Coxeter (ver el ejemplo~\ref{ex:coxeter}) entonces $G$ est\'a  cubulado~\cite{NibloRoller98}.
\end{example}

\begin{example} Si $M$ es una 3-variedad hiperb\'olica (y para muchas otras 3-variedades!), entonces $\pi_1 M$ est\'a cubulado.
\end{example}

\begin{remark}\label{rmk:torsioncc} Los grupos fundamentales de complejos cubulares npc (de dimensi\'on finita o infinita) no tienen torsi\'on. Un grupo con torsi\'on s\'i puede estar cubulado, pero hay algunas restricciones: por ejemplo, si $G$ es finitamente generado y act\'ua en un complejo cubular CAT(0) de dimensi\'on finita sin puntos fijos globales (no existe $p \in \widetilde X$ tal que $gp=p$ para todo $g \in G$), entonces $G$ no es de torsi\'on (esto es, $G$ contiene elementos de orden infinito).
\end{remark}

En general, \textbf{¿c\'omo hacemos para cubular un grupo?} Y si un grupo no se puede cubular, \textbf{¿c\'omo hacemos para demostrarlo?}
En lo que sigue abordaremos la primera de estas preguntas; la segunda es una direcci\'on un tanto diferente, y va m\'as all\'a de los objetivos de este art\'iculo.

\subsection{Espacios de paredes}

\begin{definition}[\cite{HaglundPaulin98}]\label{def:walls}
Un \emph{espacio de paredes} es un conjunto $S$ junto con una colecci\'on  $\{W_i\}_{i \in I}=\mathcal{W}$ donde $W_i=(\overleftarrow W_i,\overrightarrow W_i)$ y tal que:
\begin{enumerate}
    \item \label{cond:1w} $\overleftarrow W_i\cup \overrightarrow W_i=S$ y
    \item \label{cond:2w} $\overleftarrow W_i\cap \overrightarrow W_i=\emptyset$.
\end{enumerate}
Adem\'as, $\mathcal{W}$ satisface que, para todo par $p,q \in S$ el n\'umero de paredes que separa a $p$ de $q$, denotado por $\#_{\mathcal{W}}(p,q)$, es finito. Los pares  $W_i=(\overleftarrow W_i,\overrightarrow W_i)$ son las  \emph{paredes} de $(S,\mathcal{W})$, y  los $\overleftarrow W_i, \overrightarrow W_i$ son los \emph{semiespacios} de la pared $W_i$.
\end{definition}

\begin{remark} La condici\'on de que los semiespacios asociados a una pared $W_i$ sean ajenos se puede debilitar un poco: en la pr\'actica, basta con pedir que para todo punto $p \in S$, el conjunto $\{ W_i | p \in \overleftarrow W_i\cap \overrightarrow W_i \}$ tenga cardinalidad finita.
\end{remark}

\begin{figure}[h!]
\centerline{\includegraphics[scale=0.48]{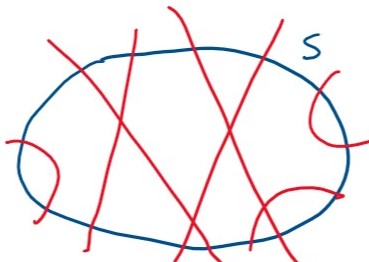}}
\caption{Idea de la definici\'on de espacio de paredes.}
\label{fig:ex0wall}
\end{figure} 

\begin{example}
Sea $S=\reals^2$ y $\mathcal{W}=\{\text{l\'ineas en el plano}\}$, entonces $\mathcal{W}$ satisface las condiciones (1) y (2), pero para todo $p\neq q \in \reals^2$, la cantidad de l\'ineas que separan a $p$ de $q$ es no-numerable. Este tambi\'en es el caso para $\reals^n$ para cualquier  $n$ cuando $\mathcal{W}$ es el conjunto de todos los subespacios afines de codimensi\'on 1 en $\reals^n$.
\end{example}

\begin{example}
Sea $S=\reals^2$ y sean $\ell, \ell'$ dos l\'ineas ortogonales, sea  $\mathcal{W}$ el conjunto de trasladados enteros de $\ell$ y $\ell'$. Entonces $(\reals^2,\mathcal{W})$ es un espacio de paredes. De manera similar, $(\reals^n, \mathcal{W})$ es un espacio de paredes para todo $n$ tomando $\mathcal{W}$ como el conjunto de trasladados enteros de cualquier par de subespacios afines ortogonales de codimensi\'on $1$.  
\end{example}

\begin{example}\label{ex:coxeter}
 Los \emph{grupos de Coxeter} est\'an dados por presentaciones de la forma $\langle a_{1},a_{2},\ldots ,a_{n}\mid (a_{i}a_{j})^{m_{ij}}=1\rangle $
donde $m_{ii}=1$ y $m_{ij}\geq 2$ si $i\neq j$ y donde definimos $m_{ij}= \infty$ si no hay relaci\'on entre $a_i$ y $a_j$. N\'otese que las relaciones de la forma $m_{ii}=1$ implican que el grupo correspondiente est\'a generado por elementos de orden $2$.  As\'i, todo  grupo de Coxeter se puede ver como un grupo generado por reflecciones en un espacio adecuado. Estos grupos tienen una estructura natural de espacio de paredes, donde $S$ son los elementos del grupo y las paredes est\'an dadas por las reflecciones que generan al grupo y por sus `trasladados'. Ver la figura~\ref{fig:morewalls} para un ejemplo sencillo.
\end{example}

La situaci\'on que m\'as nos interesa es la siguiente:

\begin{example} Para todo complejo cubular CAT(0) $\widetilde X$, su $0$-esqueleto $\widetilde X^{(0)}$ es un espacio de paredes, donde la estructura de espacio de paredes viene de tomar $\mathcal{W}=\{\text{hiperplanos de }\widetilde X\}$. Ver la figura~\ref{fig:morewalls}.
\end{example}

\begin{figure}[h!]
\centerline{\includegraphics[scale=0.533]{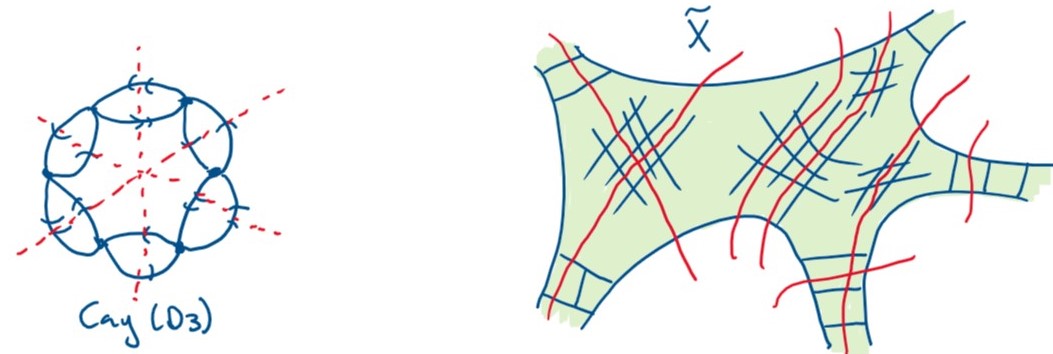}}
\caption{Espacios de paredes asociados a un grupo de Coxeter, en este caso $D_3$, y a un complejo cubular CAT(0).}
\label{fig:morewalls}
\end{figure} 

Lo siguiente que nos toca hacer es entender c\'omo puede actuar un grupo $G$ en un conjunto $S$ de tal manera que preserve la estructura de espacio de paredes dada.

\begin{definition} Un grupo $G$ \emph{act\'ua en un espacio de paredes} $(S, \mathcal{W})$ si act\'ua en $S$ y permuta las paredes, es decir, para todo $W_i \in \mathcal{W}$ y para todo $g \in G$, la pareja $(g\overleftarrow W_i,g\overrightarrow W_i)\in \mathcal{W}$.
\end{definition}

\subsubsection{La construcci\'on de Sageev}

 A  continuaci\'on describiremos una construcci\'on, introducida por Sageev\footnote{Se pronuncia \emph{saguiv}, no \emph{sagueiev}.} en~\cite{Sageev95} que produce, dado un espacio de paredes arbitrario, un complejo cubular CAT$(0)$  $C$ llamado el  \emph{dual} de $(S,\mathcal{W})$. 
Cada 0-cubo $v \in C^{(0)}$ consiste de una elecci\'on de un semiespacio para cada pared $W_i$, en otras palabras un v\'ertice $v$ corresponde a un conjunto $v=\{\bar W_i\}_{i \in I}$ donde cada $\bar W_i \in \{\overleftarrow W_i,\overrightarrow W_i\}$. Pedimos adem\'as que estas elecciones satisfagan las siguientes propiedades:
\begin{enumerate}
    \item \label{cd:w1} Toda pareja de semiespacios en $v$ se interseca,
    \item \label{cd:w2} para todo $x \in S$, a lo m\'as un n\'umero finito de semiespacios pertenecientes a un mismo $v$ no contiene a $x$.
\end{enumerate}

El complejo $C$ tiene un 1-cubo conectando 0-cubos $u,v$ si y solo si $u$ y $v$ difieren en una \'unica pared. Los cubos de dimensi\'on m\'as alta se a\~naden de manera inductiva: para cada $n\geq 2$, hay un $n$-cubo siempre que su $(n-1)$-esqueleto est\'a presente.

N\'otese que las paredes en $S$ est\'an en correspondencia biyectiva con los hiperplanos de $C$.

\begin{figure}[h!]
\centerline{\includegraphics[scale=0.52]{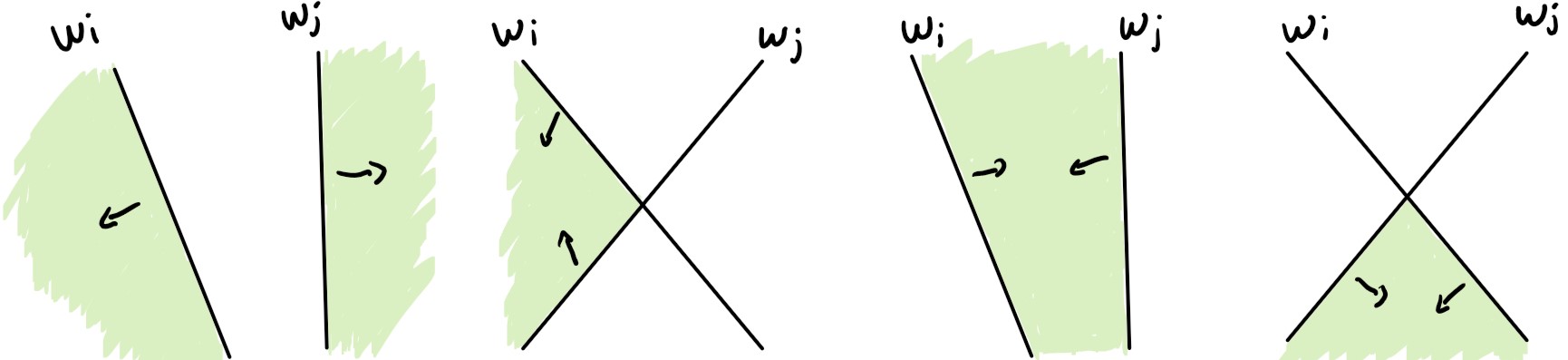}}
\caption{Idea de la condici\'on~\eqref{cd:w1} en la construcci\'on de Sageev: la elecci\'on de semiespacios (sombreada) en la izquierda no puede corresponder a un v\'ertice de $C$; las tres elecciones de la derecha s\'i.}
\label{fig:paredes1}
\end{figure}

\begin{figure}[h!]
\centerline{\includegraphics[scale=0.53]{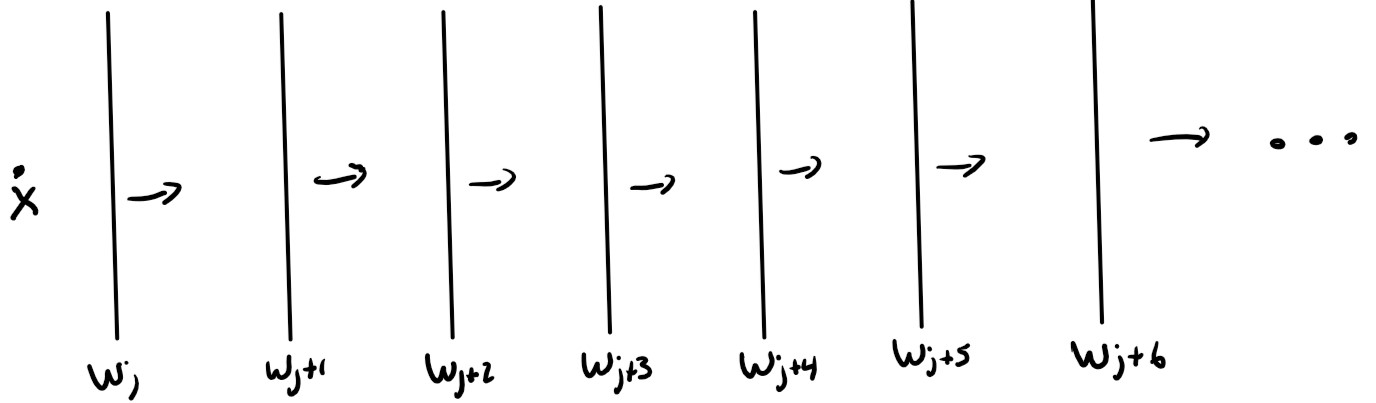}}
\caption{Ejemplo de c\'omo puede fallar la condici\'on~\eqref{cd:w2} en la construcci\'on de Sageev.}
\label{fig:paredes2}
\end{figure}

Dos paredes $W_i, W_j$ se \emph{cruzan} si los cuatro conjuntos $\overleftarrow W_i \cap \overleftarrow W_j, \overleftarrow W_i \cap \overrightarrow W_j, \overrightarrow W_i \cap \overrightarrow W_j, \overrightarrow W_i \cap \overleftarrow W_j $ son no vacios. Cuando ning\'un par de paredes en $(S, \mathcal{W})$ se cruza, el complejo cubular dual $C$ es un \'arbol.

\begin{figure}[h!]
\centerline{\includegraphics[scale=0.385]{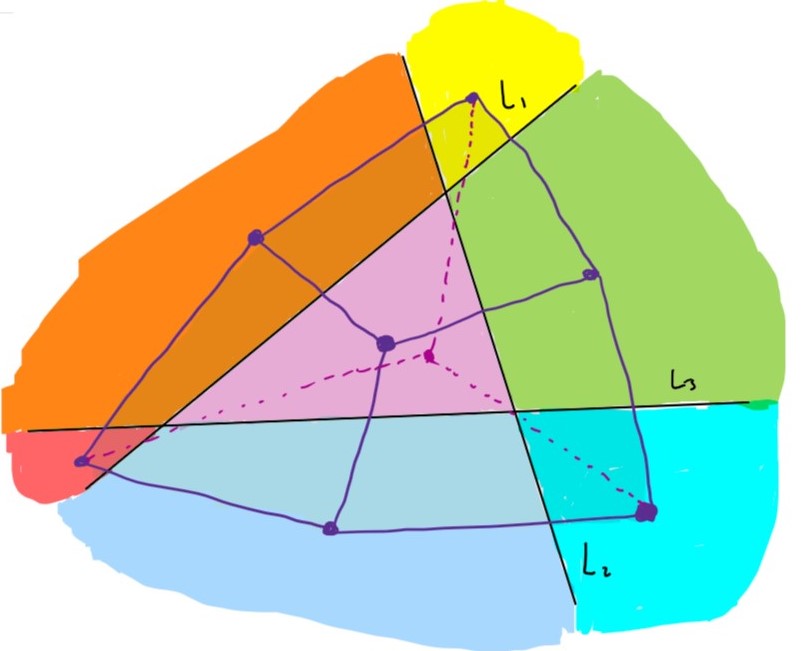}}
\caption{El espacio de paredes $(\reals^2, \mathcal{W})$ donde $\mathcal{W}$ consiste de 3 l\'ineas $L_1,L_2, L_3$ que se intersecan por pares, y su complejo cubular dual.}
\label{fig:3lines}
\end{figure}

\begin{remark}
Sean $x,x' \in X$, entonces $v_x=v_{x'}$ siempre que $\#_{\mathcal{W}(x,x')}=0$.
\end{remark}

\begin{theorem}[\cite{Sageev95}]  El complejo cubular $C$ dual a un espacio de paredes $(S,\mathcal{W})$ es CAT(0).
\end{theorem}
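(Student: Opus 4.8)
El plan es apoyarse en la caracterización dada al inicio del texto: $C$ es un complejo cubular CAT(0) si y solo si es npc (la aureola de cada $0$-cubo es un complejo bandera) y simplemente conexo. Verificaré estas dos propiedades por separado. Como dos orientaciones arbitrarias pueden diferir en una cantidad infinita de paredes, el complejo $C$ no es conexo en general, así que trabajaré dentro de la componente conexa de un $0$-cubo base fijo; esto no afecta la conclusión. Dentro de esta componente, dos $0$-cubos se unen por un camino de aristas volteando una pared a la vez: por inducción en el número finito de paredes en que difieren, usando~\eqref{cd:w1} y~\eqref{cd:w2} para hallar en cada paso una pared cuyo volteo produce un $0$-cubo válido y disminuye los desacuerdos.

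\textbf{$C$ es npc.} Un cubo de $C$ incidente a un $0$-cubo $v$ queda determinado por $v$ junto con el conjunto de paredes que se voltean a lo largo de sus aristas en $v$ (el vértice diagonal opuesto a $v$ voltea todas esas paredes y es único). De aquí que dos aristas en $v$ bordeen a lo más un cuadrado, que $link(v)$ no tenga bígonos, y que $link(v)$ sea un complejo simplicial. Para la condición de bandera, supongo que las aristas en $v$ que voltean paredes $W_1,\ldots,W_k$ son adyacentes por pares en $link(v)$; teniendo ya a cada $v_{\{i\}}$ como $0$-cubo válido, esto equivale a que cada par $W_i,W_j$ se cruce. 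La observación clave es que la condición~\eqref{cd:w1} es una condición \emph{por pares}: para cualquier $A \subseteq \{1,\ldots,k\}$, la orientación $v_A$ que voltea exactamente las paredes $W_i$ con $i \in A$ satisface~\eqref{cd:w1}, pues dos de sus semiespacios o bien coinciden con los de $v$ (y entonces se intersecan, por estar en $v$) o bien corresponden a paredes que se cruzan (y entonces se intersecan por definición de cruce); además $v_A$ satisface~\eqref{cd:w2} por diferir de $v$ en un número finito de paredes. Luego los $2^k$ vértices de un $k$-cubo son $0$-cubos válidos, el $k$-cubo está presente por la regla de construcción, y $link(v)$ resulta bandera.

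\textbf{$C$ es simplemente conexo.} Esta es la parte difícil. La idea es mostrar que todo lazo de aristas es nullhomotópico rellenándolo con cuadrados. Como cada arista voltea el semiespacio de una pared y un lazo regresa a su orientación inicial, cada pared se voltea un número \emph{par} de veces a lo largo del lazo. Tomaría un lazo $\gamma$ de longitud mínima que represente una clase no trivial de $\pi_1 C$ (basado en el $0$-cubo fijo) y derivaría una contradicción. El corazón técnico es un lema de conmutación: si dos aristas consecutivas de $\gamma$ voltean paredes distintas $W,W'$ que se cruzan, entonces (como en el argumento de npc) los cuatro vértices en juego son $0$-cubos válidos y bordean un cuadrado, de modo que el orden de los dos volteos se intercambia mediante una homotopía a través de ese cuadrado. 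Empujando de este modo uno de los dos volteos de una misma pared $W$ hacia el otro, los vuelvo adyacentes; un par de volteos adyacentes de $W$ es un retroceso, homotópico al camino constante, lo cual reduce la longitud de $\gamma$ y contradice la minimalidad.

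El obstáculo principal es precisamente este relleno por cuadrados: hay que descartar con cuidado el caso en que la pared $W'$ que se desea conmutar \emph{no} se cruza con $W$ (configuración anidada). Aquí conviene elegir dos volteos de $W$ que acoten un subcamino minimal de $\gamma$, y usar la paridad de los cruces junto con la correspondencia biyectiva entre paredes e hiperplanos de $C$ para mostrar que toda pared volteada dentro de ese subcamino debe cruzar a $W$, de manera que siempre hay un volteo conmutable disponible y la configuración anidada no puede obstruir la cancelación. Establecidas la condición de bandera y la conexión simple, la definición de complejo cubular CAT(0) da el resultado.
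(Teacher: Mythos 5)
Tu propuesta es correcta en lo esencial, pero sigue una ruta genuinamente distinta (y en dos puntos más completa) que la del texto. Para la curvatura no positiva, el texto verifica que $link(v)$ es simplicial con un argumento de unicidad muy parecido al tuyo, pero la condición de bandera sólo se comprueba para ciclos de longitud $3$, apelando al ejemplo de las tres paredes que se cruzan por pares (figura~\ref{fig:3lines}); tu argumento --- que la condición~\eqref{cd:w1} es una condición \emph{por pares}, de modo que las $2^k$ orientaciones $v_A$ con $A \subseteq \{1,\ldots,k\}$ son $0$-cubos válidos y el $k$-cubo entero aparece por la regla inductiva de construcción --- establece la condición de bandera en todas las dimensiones de una sola vez, que es lo que en rigor se necesita. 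La diferencia mayor está en la conexión simple: el texto la deja explícitamente como ejercicio, mientras que tú das el esqueleto del argumento estándar (par de volteos de una misma pared $W$ a distancia mínima, conmutación a través de cuadrados, cancelación del retroceso resultante), identificando correctamente el punto delicado: mostrar que toda pared volteada estrictamente dentro del subcamino minimal cruza a $W$. Ese lema se cierra tal como insinúas, y conviene decirlo: por minimalidad, cada pared interior $U$ se voltea exactamente una vez dentro del subcamino, así que las cuatro combinaciones $\overrightarrow W \cap \overleftarrow U$, $\overrightarrow W \cap \overrightarrow U$, $\overleftarrow W \cap \overrightarrow U$, $\overleftarrow W \cap \overleftarrow U$ se realizan en vértices del lazo (las dos últimas en los vértices inmediatamente fuera del subcamino, pues $U$ no vuelve a voltearse antes de los volteos de $W$), y todas son no vacías por~\eqref{cd:w1}; la paridad sólo se usa para garantizar que alguna pared se voltea dos veces. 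La correspondencia entre paredes e hiperplanos que invocas no es realmente necesaria aquí.

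Una observación menor pero que debes corregir: tu precaución sobre la desconexión de $C$ está mal planteada. Si $C$ fuera disconexo, el teorema tal como está enunciado sería falso (un espacio CAT(0) es conexo), así que restringirse a una componente \emph{sí} afectaría la conclusión. Lo que salva la situación es la condición~\eqref{cd:w2} de la construcción: comparando cualquier $0$-cubo válido con una orientación principal $v_x$ (los semiespacios que contienen a un punto $x \in S$ fijo) se ve que dos $0$-cubos válidos cualesquiera difieren en una cantidad finita de paredes, y entonces tu propia inducción de volteos demuestra que $C$ es conexo, no solamente que lo es cada componente. Con ese ajuste, tu demostración queda completa al nivel de detalle del propio texto.
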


\begin{proof}
La idea de la demostraci\'on no es dificil, pero s\'i es un poquito verbosa: hay que demostrar que $C$ es npc y simplemente conexo, primero vamos a ver que la aureola $link(v)$ es simplicial para cada $v \in C^{(0)}$, luego que cumple la condici\'on de ser bandera, y finalmente que $C$ es simplemente conexo.

Sea $v \in C^{(0)}$. Supongamos que $link(v)$ no es simplicial -- o sea, por ejemplo, que hay un b\'igono en $link(v)$. Esto implica que hay dos cuadrados $c$ y $c'$ en $C$ que se pegan a lo largo de dos 1-cubos consecutivos. Sean $u,v,w,z$ y $u,v,w,z'$ los v\'ertices de $c$ y $c'$ respectivamente. Ahora, cada uno de estos v\'ertices corresponde a una elecci\'on de semi-espacios de $\mathcal{W}$. Los v\'ertices $z$ y $z'$ corresponden a elecciones que difieren en un \'unico semiespacio de las elecciones correspondientes a $u$ y $w$, respectivamente, pero solamente hay dos elecciones posibles de semiespacios con esta propiedad, y uno de ellos tiene que corresponder a $v$. Esto implica que $z=z'$ y que por lo tanto $c=c'$. El argumento para ver que no hay lazos en $link(v)$ es similar.

Ahora hay que checar que $link(v)$ es bandera. Basta verificar que si hay ciclos de longitud 3 en $link(v)$, entonces estos bordean 2-simplejos. Si hay un ciclo de longitud 3 en $link(v)$, entonces hay 3 cuadrados que se pegan alrededor de $v$ y hay 3 paredes correspondientes a los hiperplanos que son duales a las 3 aristas que emanan de $v$. Pero vimos en uno de los ejemplos que el complejo cubular dual a una colecci\'on de 3 paredes que se cruzan todas por pares es un 3-cubo, o sea que los 3 cuadrados que se pegan alrededor de $v$ en realidad forman la esquina de un 3-cubo. Como este cubo est\'a presente en $C$ el 3-ciclo en $link(v)$ bordea un 2-simplejo.

Finalmente, hay que mostrar que $C$ es simplemente conexo. Esto queda como ejercicio.
\end{proof}

\begin{remark}
Sea $(S, \mathcal{W})$ un espacio de paredes y $\mathcal{C}$ su complejo cubular dual. Un cubo maximal de dimensi\'on $n$ en $\mathcal{C}$ corresponde a una colecci\'on maximal de $n$ paredes que se cruzan todas por pares (ejercicio). 
\end{remark}

\subsubsection{Acciones en espacios de paredes}

\begin{lemma}\label{lem:wallacts}Si $G$ act\'ua en un espacio de paredes $(S,\mathcal{W})$, entonces $G$ act\'ua en su dual $C$.
\end{lemma}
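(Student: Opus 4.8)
The plan is to define the action of $G$ on the dual complex $C$ by letting each $g \in G$ act on a $0$-cube $v = \{\bar W_i\}_{i \in I}$ via $g \cdot v = \{g\bar W_i\}_{i \in I}$. Since $G$ acts on $S$ and permuta las paredes, for every wall $W_i$ the pair $(g\overleftarrow W_i, g\overrightarrow W_i)$ is again a wall of $(S,\mathcal{W})$; in particular $g$ sends each half-space $\bar W_i$ to a half-space of some wall $W_{\sigma_g(i)}$. Thus $g \cdot v$ is again a selection of exactly one half-space per wall, and it remains to check that this selection satisfies conditions \eqref{cd:w1} and \eqref{cd:w2}, that the assignment is a genuine group action, and that it extends from $C^{(0)}$ to all of $C$.

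First I would verify that $g \cdot v$ is a legitimate $0$-cube. For condition \eqref{cd:w1}, given two half-spaces $g\bar W_i, g\bar W_j$ in $g\cdot v$, the fact that $g$ acts bijectively on $S$ gives $g\bar W_i \cap g\bar W_j = g(\bar W_i \cap \bar W_j)$, which is nonempty because $\bar W_i \cap \bar W_j \neq \emptyset$ by \eqref{cd:w1} for $v$. For condition \eqref{cd:w2}, fix $x \in S$ and note that $x \notin g\bar W_i$ if and only if $g^{-1}x \notin \bar W_i$; applying \eqref{cd:w2} for $v$ to the point $g^{-1}x$ shows that only finitely many $\bar W_i$ fail to contain $g^{-1}x$, hence only finitely many $g\bar W_i$ fail to contain $x$. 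This step, though short, is the main point to get right, since condition \eqref{cd:w2} is the delicate one and it is precisely the preimage $g^{-1}x$ that makes the finiteness transfer.

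Next I would check the group-action axioms and the cellularity. That $e \cdot v = v$ and $(gh)\cdot v = g\cdot(h\cdot v)$ follow immediately from $G$ acting on $S$, since $(gh)\bar W_i = g(h\bar W_i)$. To see that the action permutes cubes and not merely vertices, observe that two $0$-cubes $u, v$ differ in exactly one wall $W_i$ if and only if $g\cdot u$ and $g\cdot v$ differ in exactly the single wall $W_{\sigma_g(i)}$; hence $g$ induces an isomorphism of the graph $C^{(1)}$ preserving the relation \emph{difieren en una \'unica pared} that defines the $1$-cubes. Since the higher-dimensional cubes of $C$ are glued in inductively precisely when their skeleta are present, an isomorphism of the $1$-skeleton extends uniquely to a combinatorial automorphism of $C$.

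Combining these observations, each $g$ acts as a combinatorial automorphism of $C$ carrying cubes isometrically to cubes, so $G$ acts on $C$ by isometries, as claimed. I expect no serious obstacle: the only subtlety is the bookkeeping of the reindexing $\sigma_g$ of walls and the transfer of \eqref{cd:w2} through $g^{-1}$, both of which are routine consequences of $G$ acting by bijections on $S$ and permuting $\mathcal{W}$.
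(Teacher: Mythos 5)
Your proof is correct and follows essentially the same route as the paper: define $g\cdot v$ by translating the chosen half-spaces, note that edges (choices differing in a single wall) are preserved, and extend inductively to higher cubes using that $C$ contains a cube whenever its skeleton is present. In fact you are somewhat more careful than the paper's own proof, which treats as immediate the verification that $g\cdot v$ satisfies conditions \eqref{cd:w1} and \eqref{cd:w2} — your transfer of \eqref{cd:w2} through $g^{-1}x$ is exactly the detail it leaves implicit.
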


\begin{proof}
B\'asicamente se sigue de la definici\'on: 
La acci\'on de  $g \in G$ en  $(X,\mathcal{W})$ env\'ia a $W$ a una pared $gW \in \mathcal{W}$ y a los semiespacios correspondientes $\{\overleftarrow W, \overrightarrow W\}$ a semiespacios $\{g\overleftarrow W, g\overrightarrow W\}$,  as\'i que cada $g \in G$ act\'ua en el v\'ertice $v \in C^{(0)}$ dado por una elecci\'on de semiespacio para cada pared $W$ en $\mathcal{W}$ mand\'andolo al v\'ertice $v'=:gv$ donde los semiespacios que determinan $v'$ corresponden a los trasladados bajo $g$. Cada arista  $e$ de $C$ corresponde a un par de elecciones que difieren en una \'unica pared, as\'i que la acci\'on se extiende de  $C^{(0)}$ a $C^{(1)}$ y por lo tanto, inductivamente, a todo $C$. 
\end{proof}

Idealmente, nos gustar\'ia que las acciones en complejos cubulares fueran propias y cocompactas para poder decir que el grupo que est\'a actuando est\'a cocompactamente cubulado. Hay varios criterios que sirven para esto.

\begin{theorem}
$G$ act\'ua cocompactamente en $C$ si y solo si  hay un n\'umero finito de $G$-\'orbitas de colecciones maximales de paredes que se cruzan todas por pares.
\end{theorem}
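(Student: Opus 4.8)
The plan is to reduce the statement to a purely combinatorial count of $G$-orbits of cubes, using the $G$-equivariant correspondence between maximal cubes and maximal pairwise-crossing collections of walls together with the standard principle that a cellular action on a CW complex is cocompact precisely when it has finitely many orbits of cells. First I would record that, by Lemma~\ref{lem:wallacts}, $G$ acts on $C$ by cubical automorphisms, so it permutes the cubes of $C$ preserving dimension; in particular it carries maximal cubes to maximal cubes. Since the walls of $(S,\mathcal{W})$ are in bijection with the hyperplanes of $C$, and $g\in G$ sends the hyperplanes dual to a cube $c$ to those dual to $gc$, the correspondence of the preceding remark — maximal cubes of dimension $n$ in bijection with maximal collections of $n$ pairwise-crossing walls — is $G$-equivariant. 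Hence ``finitely many $G$-orbits of maximal collections of pairwise-crossing walls'' is the same condition as ``finitely many $G$-orbits of maximal cubes,'' and the theorem becomes: $G$ acts cocompactly on $C$ if and only if there are finitely many $G$-orbits of maximal cubes.

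Next I would establish the general principle that a cellular action on a cube complex is cocompact if and only if it has finitely many orbits of cubes. For the forward direction, if $C=G\cdot K$ with $K$ compact, then $K$ meets only finitely many open cubes (a compact subset of a CW complex lies in a finite subcomplex), and every cube is a $G$-translate of one meeting $K$, so there are finitely many orbits. Conversely, choosing one representative cube from each orbit and letting $K$ be the union of their closures yields a finite, hence compact, set with $G\cdot K=C$.

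It then remains to pass between ``finitely many orbits of cubes'' and ``finitely many orbits of maximal cubes.'' One implication is immediate, since the maximal cubes form a $G$-invariant subfamily. For the converse, I would first note that finitely many orbits of maximal cubes force their dimensions to be bounded by some $d$ (each orbit consists of cubes of a single dimension, and there are finitely many orbits). Since every collection of pairwise-crossing walls extends by Zorn's Lemma to a maximal one — the union of a chain of pairwise-crossing collections is again pairwise-crossing — every such collection, and therefore every cube of $C$ (whose dual hyperplanes pairwise cross by Theorem~\ref{thm:cat0}\eqref{it:cat5}), has dimension $\le d$; thus $C$ is finite-dimensional. In a finite-dimensional cube complex every cube is a face of a maximal cube, and a maximal cube of dimension $\le d$ has at most $3^d$ faces, so finitely many orbits of maximal cubes produce finitely many orbits of cubes. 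Chaining the three equivalences proves the theorem.

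The step I expect to be the main obstacle is this last reduction from maximal cubes to all cubes: one must rule out the pathologies of an infinite-dimensional $C$ and of cubes contained in no maximal cube before the finite face-count argument applies. This is exactly where the boundedness of the sizes of the maximal pairwise-crossing collections, transported through the wall–hyperplane dictionary, does the essential work.
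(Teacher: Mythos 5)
The paper states this theorem without proof, so your proposal has to stand on its own. Your skeleton is the natural one, and two of its three blocks are sound: the equivariance of the wall--hyperplane dictionary, and the equivalence ``cocompact $\Leftrightarrow$ finitely many $G$-orbits of cubes'' via the standard CW argument (compact sets meet finitely many open cells; finitely many closed representative cubes give a compact fundamental set). The genuine gap is in the third block, precisely at the point you flagged but did not resolve: nothing rules out an \emph{infinite} pairwise-crossing family of walls. The hypothesis gives finitely many $G$-\emph{orbits} of maximal collections, not finiteness of the collections themselves, and the paper's remark only matches maximal cubes with maximal collections of \emph{finite} cardinality $n$. So the maximal collection produced by your Zorn's Lemma step need not be the wall-set of any cube, the bound $d$ extracted from maximal cubes does not apply to it, and the conclusion ``every cube of $C$ has dimension $\le d$'' does not follow.

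This is not a pedantic objection, because the step ``finitely many orbits of maximal cubes $\Rightarrow$ finitely many orbits of cubes'' is actually false in that generality. Take $S$ to be the set of finitely supported functions $\mathbb{N}\to\{0,1\}$, with walls $W_i=\bigl(\{f: f(i)=0\},\{f: f(i)=1\}\bigr)$ and $G$ trivial; this satisfies Definition~\ref{def:walls}, since two such functions differ in only finitely many coordinates. Every two walls cross, so there is exactly \emph{one} maximal pairwise-crossing collection (all walls, infinite), while the dual $C$ contains cubes of every dimension and \emph{no} maximal cubes at all: every $n$-cube is a face of an $(n+1)$-cube. Thus ``finitely many orbits of maximal cubes'' holds vacuously, ``finitely many orbits of maximal collections'' holds, yet the action is not cocompact. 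A complete proof must therefore make explicit the finiteness convention implicit in the statement (maximal collections are finite, equivalently realized by cubes, equivalently $C$ is finite-dimensional), either as a standing hypothesis or by proving it from whatever extra assumptions are intended; once that is in place, your face-counting argument (at most $3^d$ faces per maximal cube) does finish the proof. A smaller point to record along the way: both directions of your argument use the paper's remark as a genuine bijection, so one also needs that each finite maximal collection is realized by a cube and by only one cube --- true, but part of that exercise rather than automatic.
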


Para determinar si las acciones son m\'etricamente propias, un criterio es el siguiente:

\begin{theorem}
Si $\#(p,gp)\rightarrow \infty$ cuando $\dist(1,g)\rightarrow \infty $ entonces $G$ act\'ua de manera m\'etricamente propia en $C$.
\end{theorem}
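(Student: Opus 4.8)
The plan is to prove that the action of $G$ on the $1$-skeleton of $C$, equipped with its graph metric (which I will write $d_C$ to distinguish it from the Cayley metric $\dist$ appearing in the hypothesis), is metrically proper, by reducing everything to the ``principal'' vertices $v_p$ and then feeding in the hypothesis. First I would record a reduction. Since $G$ acts by graph automorphisms, hence by isometries of $d_C$, we have $gB_r(x)=B_r(gx)$, so $B_r(x)\cap gB_r(x)\neq\emptyset$ exactly when $d_C(x,gx)<2r$; thus metric properness is equivalent to the finiteness of $\{g: d_C(x,gx)<R\}$ for every $x\in C$ and every $R>0$. Moreover it suffices to check this at a single vertex $v$: for an arbitrary $x$ with $d_C(x,v)=D$ one has $d_C(v,gv)\le 2D+d_C(x,gx)$, so $\{g:d_C(x,gx)<R\}\subseteq\{g:d_C(v,gv)<R+2D\}$, and finiteness of the latter forces finiteness of the former.

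Next I would fix a point $p\in S$ and its principal vertex $v_p=\{\bar W: p\in\bar W\}$, the orientation choosing for each wall $W$ the halfspace containing $p$. I would verify that $v_p$ is a genuine $0$-cube of $C$: condition~\eqref{cd:w1} holds since any two of its halfspaces both contain $p$ and hence intersect, while condition~\eqref{cd:w2} holds because a halfspace of $v_p$ omits a point $x$ precisely when the corresponding wall separates $p$ from $x$, and there are only $\#(p,x)<\infty$ such walls by the wall-space axiom. Directly from the description of the action in Lemma~\ref{lem:wallacts}, I would then observe that $g\cdot v_p=v_{gp}$, i.e.\ the orientation ``pointing toward $p$'' is carried by $g$ to the orientation ``pointing toward $gp$''.

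The key geometric input is the inequality $\#(p,q)\le d_C(v_p,v_q)$. The vertices $v_p$ and $v_q$ disagree exactly on the walls separating $p$ from $q$, of which there are $\#(p,q)$; and since each edge of $C$ flips the orientation of a single wall, any edge path from $v_p$ to $v_q$ must flip each of these separating walls a nonzero number of times, so it has at least $\#(p,q)$ edges. (The reverse inequality also holds, by Sageev's theorem, but is not needed.) Now I combine: given $R>0$, the hypothesis that $\#(p,gp)\to\infty$ as $\dist(1,g)\to\infty$ supplies a radius $D$ with $\dist(1,g)>D\Rightarrow\#(p,gp)>R$. Hence if $d_C(v_p,gv_p)<R$, then $\#(p,gp)\le d_C(v_p,v_{gp})<R$, forcing $\dist(1,g)\le D$; so $\{g:d_C(v_p,gv_p)<R\}$ lies inside the $\dist$-ball of radius $D$ about $1$, which is finite because $G$ is finitely generated. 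By the first-step reduction this yields metric properness.

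I expect the main obstacle to be the bookkeeping around the principal vertices---confirming that $v_p\in C^{(0)}$ and that $g\cdot v_p=v_{gp}$---together with pinning down the inequality $\#(p,q)\le d_C(v_p,v_q)$ in the direction actually used. Everything else is a formal manipulation of the definition of metric properness and the elementary fact that balls in a finitely generated group are finite.
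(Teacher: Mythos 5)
The paper states this criterion without proof (just as it does with the cocompactness criterion preceding it), so there is no argument in the text to compare yours against; what you have written fills that gap. Your proof is correct and is the standard argument: the canonical vertices $v_p=\{\bar W : p\in \bar W\}$, the verification that $v_p$ satisfies the two conditions of the Sageev construction (the second being exactly the wallspace axiom that $\#(p,x)<\infty$), the equivariance $g\cdot v_p=v_{gp}$, and the lower bound $\#(p,gp)\le d_C(v_p,gv_p)$ coming from the fact that each edge of $C$ flips the orientation of exactly one wall, so any edge path between $v_p$ and $v_{gp}$ must flip every wall separating $p$ from $gp$ at least once. Your opening reductions are also sound: since $G$ acts by isometries, properness follows from finiteness of $\{g: d_C(x,gx)<R\}$ for all $R$, and the triangle inequality lets you transfer this from an arbitrary point to the single vertex $v_p$; moreover, as the paper notes, the $1$-esqueleto embeds isometrically in $C$ with the $\ell^1$ metric, so working with the graph metric on vertices is legitimate. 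The only hypotheses you use beyond the statement itself are the implicit ones the paper also needs for $\dist(1,g)$ to make sense and for the conclusion to follow, namely that $G$ is finitely generated (so that $\dist$-balls are finite) and that $p$ is the basepoint appearing in the hypothesis; flagging these, as you do, is appropriate rather than a defect.
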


\subsubsection{Subgrupos de codimensi\'on 1}

Una situaci\'on m\'as algebraica en donde tenemos una estructura de espacio de paredes es cuando $G$ es finitamente generado y $H$ es  un subgrupo que  `separa' a la gr\'afica de Cayley  $Cay(G,S)$ de $G$. 

Dado un subgrupo $H<G$ usamos la notaci\'on $\mathcal{N}_r(H)=\{x:d(x,h)\leq r \text{ para alg\'un } h \in H \}$ para denotar la $r$-vecindad de $H$ en $Cay(G,S)$.

\begin{definition}
Una componente conexa  $A$ de $Cay(G,S)- \mathcal{N}_r(H)$ es \emph{profunda} si para todo $s > 0$ tenemos que $A \nsubseteq \mathcal{N}_s(H)$.
Un subgrupo $H<G$ tiene \emph{codimensi\'on 1} si para todo $r > 0$ $\mathcal{N}_r(H)$  separa a $Cay(G,S)$ en al menos dos \'orbitas de componentes conexas profundas.
\end{definition}

Una componente conexa $A \subset G$ y su complemento $A'$ forman una pared $\mathcal{A}$.  La pareja $(G, \mathcal{W})$ con  $\mathcal{W}=\{g\mathcal{A}| g \in G\}$ es un espacio de paredes.

Aunque no es cierto en general que un subgrupo de codimensi\'on 1 tenga `dimensi\'on 1 menos que el grupo original' en ning\'un sentido formal, esta terminolog\'ia est\'a justificada por los ejemplos que surgen de la topolog\'ia en dimensiones bajas: por ejemplo, en el grupo fundamental de una superficie cerrada y orientable, un subgrupo c\'iclico de codimensi\'on 1 corresponde a una curva cerrada no nullhomot\'opica en la superficie, y de manera an\'aloga, en el grupo fundamental de una 3-variedad orientable cerrada, el grupo fundamental de una superficie inmersa incompresible es un subgrupo de codimensi\'on 1.

\begin{remark} Tener codimensi\'on 1 es independiente de la elecci\'on de generadores para $G$. 
\end{remark}

\begin{example} Todo subgrupo $\mathbb{Z}^{n-1} \hookrightarrow \mathbb{Z}^n$ es de codimensi\'on 1.
\end{example}

Cuando $G$ es hiperb\'olico, sus subgrupos cuasiconvexos pueden servir para cubular (comparar con la proposici\'on~\ref{cor:loci}):

\begin{proposition}[\cite{Sageev97}] \label{prop:coco}Si $G$ es hiperb\'olico y los estabilizadores de las paredes corresponden a una colecci\'on $\{H_1, \ldots, H_n\}$  de subgrupos cuasiconvexos, entonces la acci\'on de $G$ en su complejo cubular dual $C$ es cocompacta. Si, adem\'as, para cada elemento de orden infinito $g \in G$, su eje $E_g$ es cortado por alg\'un trasladado $g'H_j$, entonces la acci\'on  es propia.
\end{proposition}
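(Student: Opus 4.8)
El plan es tratar las dos afirmaciones por separado, apoyándonos en los dos criterios enunciados justo arriba: la acción es cocompacta si y solo si hay un número finito de $G$-órbitas de colecciones maximales de paredes que se cruzan por pares, y la acción es métricamente propia si $\#(v_0,gv_0)\to\infty$ cuando $\dist(1,g)\to\infty$, donde $v_0\in C^{(0)}$ es el vértice asociado a $1\in G$ (así $\#(v_0,gv_0)$ es el número de paredes que separan a $1$ de $g$ en $(G,\mathcal{W})$). La observación que usaremos en ambas partes es que cada pared está a distancia de Hausdorff acotada de un trasladado $gH_i$, el cual es un subconjunto cuasiconvexo de $Cay(G,S)$ por ser los $H_i$ cuasiconvexos.

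Para la cocompacidad, primero demostraría que dos paredes que se cruzan tienen soportes que se intersecan de manera gruesa. En un espacio hiperbólico, si dos subconjuntos cuasiconvexos no se acercan a distancia acotada, entonces uno queda contenido en una vecindad acotada de un único semiespacio del otro; en ese caso uno de los cuatro cuadrantes que definen el cruce es vacío, así que las paredes no se cruzan. Luego aplicaría una propiedad de Helly gruesa para conjuntos cuasiconvexos en espacios hiperbólicos: una familia que se acerca por pares a distancia $\leq B$ se acerca toda a distancia $\leq B'$ de un punto común $x_0$, donde $B'$ depende solo de $\delta$ y de las constantes de cuasiconvexidad. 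Por lo tanto, toda colección de paredes que se cruzan por pares interseca una bola común $B(x_0,B')$.

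Para concluir esta parte, usaría que $G$ actúa transitivamente en los vértices de $Cay(G,S)$ por traslación izquierda, así que, trasladando, puedo suponer que la colección interseca la bola fija $B(1,R)$ con $R=B'$. Solo un número finito de paredes interseca $B(1,R)$: como hay finitas $G$-órbitas de paredes y todo trasladado $gH_i$ que interseca $B(1,R)$ tiene un representante en la bola finita $B(1,R+r)$, el número de tales cosets, y por ende de paredes, es a lo sumo $|B(1,R+r)|$. Como solo hay finitos subconjuntos de esta familia finita de paredes, hay un número finito de $G$-órbitas de colecciones maximales de paredes que se cruzan por pares, y el criterio de cocompacidad da el resultado.

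Para la propiedad procedería por contradicción: supongamos que existe una sucesión $h_n$ con $\dist(1,h_n)\to\infty$ pero $\#(1,h_n)\leq K$. Como una geodésica $[1,h_n]$ cruza a lo sumo $K$ paredes pero tiene longitud $\to\infty$, contiene un subsegmento de longitud $\to\infty$ que no cruza ninguna pared; trasladándolo por la izquierda para que pase cerca de $1$ y usando que $Cay(G,S)$ es localmente finito (argumento de compacidad tipo Arzel\`a--Ascoli, o el lema de K\"onig), extraería en el límite una geodésica bi-infinita $\ell$ a través de $1$ que no es cruzada por ninguna pared, pues si una pared separara dos vértices de $\ell$ separaría esos mismos vértices en los subsegmentos trasladados para $n$ grande. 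Aquí entra la hipótesis sobre los ejes: por hiperbolicidad y cuasi-periodicidad de las geodésicas (hay finitos tipos de bolas de radio fijo, así que algún patrón se repite a lo largo de $\ell$) produciría un elemento de orden infinito $g$ cuyo eje $E_g$ está a distancia de Hausdorff acotada de $\ell$ y comparte sus dos extremos en el infinito. Por hipótesis, algún trasladado $g'H_j$ corta a $E_g$, es decir, separa sus dos extremos; como $\ell$ y $E_g$ tienen los mismos extremos, esa pared también cruza a $\ell$, contradiciendo lo anterior. El obstáculo principal es geométrico-grueso en ambos casos: en la cocompacidad, justificar con cuidado que cruzarse equivale a intersecarse de manera gruesa junto con la propiedad de Helly gruesa; en la propiedad, el paso delicado es fabricar el elemento de orden infinito $g$ con eje cercano a $\ell$ a partir de la cuasi-periodicidad, que es donde la hiperbolicidad de $G$ resulta indispensable.
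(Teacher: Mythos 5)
Una aclaración previa: el artículo no demuestra esta proposición (la enuncia citando a \cite{Sageev95} y \cite{Sageev97}), así que no hay una prueba del texto con la cual comparar; evalúo tu propuesta por sus propios méritos y contra el argumento de la referencia citada.

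Tu mitad de cocompacidad es esencialmente correcta y coincide con la estrategia estándar: (i) si dos paredes se cruzan, las vecindades $N_r(gH_i)$ y $N_r(g'H_j)$ que las definen se intersecan --- observa, de paso, que para este paso basta la conexidad: si fueran ajenas, una de las vecindades junto con uno de sus semiespacios quedaría contenida en un solo semiespacio de la otra pared y un cuadrante sería vacío, sin necesidad de hiperbolicidad ---; (ii) la propiedad de Helly gruesa para familias de subconjuntos cuasiconvexos que se acercan por pares en un espacio $\delta$-hiperbólico, que es el lema clave y donde sí son indispensables la hiperbolicidad y la cuasiconvexidad; (iii) sólo un número finito de paredes toca una bola fija. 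Junto con el criterio de cocompacidad enunciado en el texto, esto concluye correctamente.

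En la mitad de propiedad hay dos lagunas genuinas. Primera: de $\#(1,h_n)\leq K$ \emph{no} se sigue que la geodésica $[1,h_n]$ cruce a lo sumo $K$ paredes. Una pared puede separar dos puntos de la geodésica sin separar sus extremos (la geodésica entra y sale del semiespacio correspondiente un número par de veces), de modo que la extracción de un subsegmento largo no cruzado por ninguna pared no está justificada. Nota que tu argumento de propiedad nunca usa la cuasiconvexidad de los $H_j$, y es exactamente ella la que repara este paso: como $N_r(g'H_j)$ es cuasiconvexo, el conjunto de tiempos en que una geodésica pasa cerca de él es un intervalo grueso, y de ahí se deduce que toda pared que separa dos puntos de la geodésica \emph{lejanos a la coclase que la define} sí separa los extremos; así es como la cota $K$ se convierte en una cota sobre cruces. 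Segunda: el elemento $g$ que produces por repetición de patrones no tiene por qué tener eje a distancia de Hausdorff acotada de $\ell$ ni compartir sus dos extremos en el infinito: una geodésica bi-infinita arbitraria en un grupo hiperbólico no es, en general, cercana al eje de ningún elemento; la repetición sólo produce un eje que viaja junto a $\ell$ a lo largo de un segmento finito (tan largo como se quiera). Para cerrar el argumento hace falta además la equivariancia: si una pared $W$ corta $E_g$, entonces todos los trasladados $g^kW$ también cortan $E_g$, así que $E_g$ es cortado a distancia acotada de cualquiera de sus puntos, en particular dentro de la ventana donde viaja junto a $\ell$, y esos cortes profundos se transfieren entonces a $\ell$ vía el lema del punto anterior. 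Sin estos dos ingredientes (cuasiconvexidad de las paredes y equivariancia de los cortes del eje) el esquema de la segunda parte no cierra.
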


\section{Complejos (virtualmente) especiales}

Las nociones y los resultados presentados en esta secci\'on se originan en~\cite{HaglundWiseSpecial}.

Comenzamos con los siguientes cuatro comportamientos  que se pueden presentar entre hiperplanos en un complejo cubular:

\textbf{Orientabilidad:} Un hiperplano $H$ en $X$ \emph{tiene dos lados} si todos los 1-cubos duales a $H$ se pueden orientar de manera consistente. 

\textbf{Cruces:} Dos hiperplanos  $H$ y $H'$ \emph{se cruzan} si tienen cubos centrales pertenecientes a un mismo cubo de $X$. Un hiperplano est\'a \emph{encajado} si no se cruza a si mismo.

\textbf{Autoosculaci\'on:} Un hiperplano $U$ se \emph{autooscula} si es dual a dos o m\'as 1-cubos dirigidos que comparten el mismo v\'ertice inicial o final.

\textbf{Interosculaci\'on:} Dos hiperplanos $U,V$ se  \emph{interosculan} en $p \in X^{(0)}$ si se cruzan y son duales a 1-cubos $e,f$ en $p$ que no se encuentran en la esquina de un 2-cubo (o sea, $e$ y $f$ no son adyacentes en $link(p)$).  

\begin{definition}
Un complejo cubular npc $X$ es \emph{especial} si las siguientes condiciones se cumplen:

\begin{enumerate}
\item \label{en:sp1} Todos los hiperplanos de $X$ tienen dos lados.
\item \label{en:sp2} Todos los hiperplanos de $X$ est\'an encajados.
\item \label{en:sp3} Ning\'un hiperplano de $X$ se autooscula.
\item \label{en:sp4} Ning\'un par de hiperplanos de $X$ se interoscula.
\end{enumerate}
\end{definition}

\begin{figure}[h!]
\centerline{\includegraphics[scale=0.45]{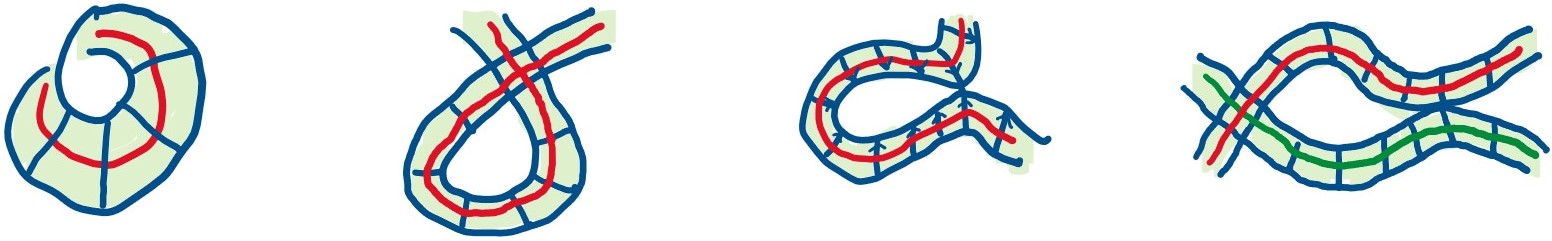}}
\caption{Las cuatro posibles patolog\'ias.}
\label{fig:pathologies}
\end{figure}

\begin{figure}[h!]
\centerline{\includegraphics[scale=0.45]{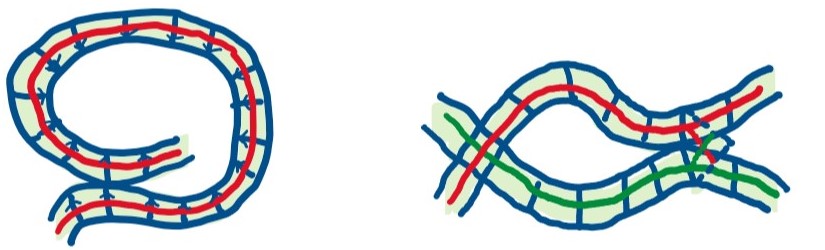}}
\caption{Dos situaciones que no constituyen patolog\'ias: en la primera no hay autoosculaci\'on, porque los 1-cubos que comparten el mismo  0-cubo est\'an dirigidos de manera consistente; en la segunda, no hay interosculaci\'on, pues los hiperplanos se intersecan en el cuadrado que est\'a `detr\'as' del resto.}
\label{fig:nonpathologies}
\end{figure}

\begin{example} Toda gr\'afica es especial: como todos los hiperplanos son baricentros de aristas, las condiciones se satisfacen automaticamente.
\end{example}

\begin{example} Los complejos cubulares CAT(0) son especiales. Esto se sigue del teorema~\ref{thm:cat0}: la conclusi\'on~\eqref{it:cat3} implica que los hiperplanos tienen dos lados, la conclusi\'on~\eqref{it:cat1} implica que los hiperplanos est\'an encajados, la conclusi\'on~\eqref{it:cat5} dice exactamente que los hiperplanos no se interosculan, y la conclusi\'on~\eqref{it:cat2} implica, junto con el teorema~\ref{thm:local}, que los hiperplanos no se autoosculan.
\end{example}

Otros ejemplos se pueden obtener de la siguiente proposici\'on:

\begin{proposition}\label{prop:specialprod} Sea $X \subset (A\times B)$ un subcomplejo del producto de dos gr\'aficas $A$ y $B$. Entonces $X$ es especial. 
\end{proposition}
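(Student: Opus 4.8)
The plan is to verify the four defining conditions of specialness directly, exploiting the bipartite nature of the edges in a product of graphs. First I would record the combinatorial anatomy of $A\times B$: every edge is of one of two types, an \emph{$a$-edge} of the form $e_A\times\{v\}$ with $e_A$ an edge of $A$ and $v$ a vertex of $B$, or a \emph{$b$-edge} $\{u\}\times e_B$; every square is a product $e_A\times e_B$ and carries exactly two midcubes, namely $m_{e_A}\times e_B$ (an $a$-midcube) and $e_A\times m_{e_B}$ (a $b$-midcube). Since two 1-cubes are dual to the same hyperplane only when they are opposite sides of a square, and opposite sides of a square always have the same type, each hyperplane of $X$ inherits a well-defined \emph{type}. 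Moreover, tracking the underlying graph edge across a chain of squares (the opposite side of $e_A\times e_B$ is again supported on $e_A$) shows that every $a$-type hyperplane $U$ has a single underlying edge $e_A$ of $A$: all of its dual 1-cubes have the form $e_A\times\{v\}$, and symmetrically for $b$-type hyperplanes.

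With this in hand, I would fix once and for all an orientation of every edge of $A$ and of $B$, inducing an orientation of every edge of $A\times B$. For two-sidedness, the dual 1-cubes $e_A\times\{v\}$ of an $a$-type hyperplane $U$ are all oriented by the chosen orientation of $e_A$, so they are coherently oriented; this settles condition~(1). For embeddedness, a single square carries one $a$-midcube and one $b$-midcube, so a hyperplane, having a single type, meets each square in at most one midcube and therefore cannot cross itself; this is condition~(2). For non-self-osculation, writing $a_0,a_1$ for the oriented endpoints of $e_A$, the dual 1-cubes of $U$ have pairwise distinct initial vertices $(a_0,v)$ and pairwise distinct terminal vertices $(a_1,v)$ as $v$ ranges over the relevant vertices of $B$, so no two of them share an initial or a terminal vertex; this is condition~(3).

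The one genuinely substantive point is non-interosculation, condition~(4), and this is where I expect the main obstacle to lie, since specialness is not inherited by arbitrary subcomplexes and one must see why the product structure prevents a missing square from creating an interosculation. The key observation I would isolate is that \emph{for a fixed pair $(e_A,e_B)$ the complex $A\times B$ contains a unique square $e_A\times e_B$}. Now suppose two hyperplanes cross; crossing hyperplanes necessarily have opposite types, say $U$ of type $a$ with underlying edge $e_A$ and $V$ of type $b$ with underlying edge $e_B$, and the square witnessing the crossing must be exactly $e_A\times e_B$, so this square lies in $X$. If $U$ and $V$ are also dual to 1-cubes $e=e_A\times\{v\}$ and $f=\{u\}\times e_B$ meeting at a common vertex, then that vertex is forced to be $(u,v)$ with $u\in\{a_0,a_1\}$ and $v$ an endpoint of $e_B$; hence it is a corner of the unique square $e_A\times e_B$, and $e,f$ are the two sides of that square at that corner. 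Since the square is already known to lie in $X$, the 1-cubes $e$ and $f$ bound a square corner, so $U$ and $V$ do not interosculate. Assembling conditions~(1)--(4) yields that $X$ is special; a minor point to dispatch along the way is the degenerate case in which $A$ or $B$ has a loop, where one checks that a single dual loop involves only one 1-cube and so creates no osculation.
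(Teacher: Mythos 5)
Your proof is correct, and it takes a genuinely different route from the paper's --- one that is in fact more careful at the decisive point. The paper argues in two steps: first it verifies the four conditions for the full product $A\times B$ (using, as you do, that every hyperplane is of the form $a\times B$ or $A\times b$), and then it disposes of the subcomplex case by asserting that in dimension $2$ specialness passes to \emph{arbitrary} subcomplexes, on the grounds that any pathology among hyperplanes of $Y'\subset Y$ would have to come from a pathology among hyperplanes of $Y$. That assertion is sound for one-sidedness, self-crossing and self-osculation, but it fails for interosculation --- exactly the condition you singled out as the substantive one: deleting a square can \emph{create} an interosculation, because two hyperplanes of the subcomplex may still cross in some other square while the unique square resolving their osculation is the one that was removed. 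Concretely, let $Y=\mathbb{E}^2/\Lambda$ with $\Lambda$ generated by $(x,y)\mapsto(x+2,y+1)$ and $(x,y)\mapsto(x,y+5)$: this flat torus is a special square complex whose single horizontal hyperplane crosses each of the two vertical hyperplanes five times; if one square $R$ is deleted (keeping its boundary), the relevant hyperplanes remain connected, still cross in the surviving squares, and are dual to two sides of $R$ meeting at a corner that no square of $Y-R$ spans, so $Y-R$ is not special. Such a $Y$ is, of course, not a subcomplex of a product of two graphs, precisely because two of its hyperplanes cross more than once. Your argument is immune to this failure: the type-plus-underlying-edge invariant shows that two crossing hyperplanes of $X$ can only cross in the single square $e_A\times e_B$, and that any osculation between them occurs at a corner of that very square, which lies in $X$ because they cross. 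This identification of the resolving square with the (unique) crossing square is exactly the ingredient missing from the paper's subcomplex step; so your proof not only establishes the proposition but supplies the argument that the paper's own proof needed, at the cost of being tailored to products of two graphs rather than attempting a (false) general inheritance statement. The remaining verifications --- orientations pulled back from $A$ and $B$, one midcube of each type per square, distinct endpoints of dual edges, and the loop caveat --- agree in substance with the paper's treatment of the full product and are fine.
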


\begin{proof}
 Empezamos por considerar $X = A\times B$. Ya sabemos que $A\times B$ es npc porque esto se cumple para cualquier producto de complejos cubulares npc. Para mostrar que  $A\times B$ es especial, analizamos los hiperplanos. Todo hiperplano en $A\times B$ es de la forma $H=A \times \{b\}$ o $H=\{a\} \times B$ donde $a$ es un hiperplano en $A$ y $b$ es un hiperplano en $B$. Como estos son productos, siempre tienen dos lados. Cada cuadrado de $A\times B$ es el producto de una arista en $A$ con una arista en $B$, as\'i que ning\'un hiperplano de $A \times B$ puede tener dos semicubos pertenecientes a un mismo cubo, y cada hiperplano tiene que estar encajado. Si un hiperplano $H$ se autooscula, entonces  hay al menos dos aristas dirigidas consistentemente que son duales a $H$ y  que comparten el mismo v\'ertice inicial o final, de nuevo esto contradice que $A \times B$ es un producto. Finalmente, si dos hiperplanos $H, H'$ se interosculan, entonces el cuadrado en el que se cruzan indica que, digamos, $H$ y $H'$ son de la forma $A \times \{b\}$ y $\{a\} \times B$, respectivamente. Pero el v\'ertice $p$ en donde los soportes de $H$ y $H'$ se tocan sin que $H$ y $H'$ se intersecten indica que los dos hiperplanos son de la forma $A \times \{b\}$ o $\{a\} \times B$, as\'i que de nuevo llegamos a una contradicci\'on.

Ahora, si un complejo cubular $Y$ de dimensi\'on $2$ es especial, entonces todo subcomplejo $Y'$ de $Y$ tambi\'en lo es. En efecto, $Y'$ es un complejo cubular no positivamente curvado pues $Y$ lo es, y para cada $v \subset Y'\subset Y$, la restricci\'on de $link(v)$ a $Y'$ no puede contener ciclos que no estuvieran ya presentes en $Y$. De manera similar, $Y'$ es especial porque toda patolog\'ia entre hiperplanos de $Y'$ tendr\'ia que provenir de una patolog\'ia entre hiperplanos de $Y$.  
\end{proof}

La primera parte de la demostraci\'on anterior se puede extender a productos arbitrarios de complejos cubulares especiales. 

\begin{theorem}[El Ejemplo]\label{thm:salvspe}
El complejo de Salvetti $R$ de un gaar es especial.
\end{theorem}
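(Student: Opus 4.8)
El plan es verificar directamente las cuatro condiciones de la definici\'on de complejo especial, aprovechando la estructura combinatoria expl\'icita de $R$ descrita en la demostraci\'on del teorema~\ref{thm:salvettis} (y sin necesidad de reverificar que $R$ es npc, pues eso ya es parte de dicho teorema). Primero establecer\'ia el siguiente diccionario, que ser\'a la herramienta fundamental: el complejo $R$ tiene un \'unico $0$-cubo $w$; sus $1$-cubos son lazos $e_v$ en biyecci\'on con los generadores $x_v$ ($v \in V$); y para cada subgr\'afica completa con conjunto de v\'ertices $S \subseteq V$ hay un \'unico cubo de dimensi\'on $|S|$, a saber, el $|S|$-toro obtenido de $[0,1]^S$ identificando caras opuestas. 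En particular, cada cubo de $R$ tiene, para cada $v \in S$, exactamente una coordenada en la direcci\'on de $e_v$. Se sigue que los hiperplanos de $R$ est\'an en biyecci\'on con los v\'ertices $V$: el hiperplano $H_v$ es el que pasa por el cubo central en la direcci\'on de $e_v$ en cada cubo que contiene a $e_v$. Un punto crucial es que $H_v$ es dual a un \'unico $1$-cubo, el lazo $e_v$. Orientar\'ia adem\'as cada $e_v$ en la direcci\'on de $x_v$; esto orienta de manera consistente a todos los $1$-cubos duales a cada $H_v$, pues en cualquier cubo-toro las aristas paralelas a $e_v$ son todas la coordenada $v$-\'esima y por lo tanto est\'an orientadas coherentemente por la estructura de producto. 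Esto da la condici\'on~\eqref{en:sp1} (todos los hiperplanos tienen dos lados).

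Para la condici\'on~\eqref{en:sp2} (hiperplanos encajados): como cada cubo de $R$ tiene exactamente un cubo central en la direcci\'on de $e_v$, el hiperplano $H_v$ interseca a cada cubo en a lo sumo un cubo central, as\'i que $H_v$ no puede cruzarse a s\'i mismo. Para la condici\'on~\eqref{en:sp3} (sin autoosculaci\'on): como acabamos de observar, $H_v$ es dual a un \'unico $1$-cubo $e_v$, y la definici\'on de autoosculaci\'on requiere \emph{dos o m\'as} $1$-cubos dirigidos duales a $H_v$ que compartan v\'ertice inicial o final, condici\'on que es vac\'ia cuando solo hay un $1$-cubo dual. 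Aqu\'i conviene ser cuidadoso: aunque $e_v$ es un lazo y sus dos extremos yacen en el mismo $0$-cubo $w$, esto no produce autoosculaci\'on precisamente porque se trata de un solo $1$-cubo (uno de los extremos es inicial y el otro final), y no de dos.

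Resta la condici\'on~\eqref{en:sp4} (sin interosculaci\'on), que es la parte m\'as delicada y donde conviene usar la aureola $\link(w)$. Sus v\'ertices son los $2|V|$ extremos $\{v^+, v^-\}$ de los lazos $e_v$, y dos v\'ertices $u^{\epsilon}, v^{\delta}$ (con $u \neq v$) son adyacentes si y solo si $\{u,v\} \in E$; en efecto, las cuatro esquinas del $2$-toro $e_u \times e_v$ realizan las cuatro combinaciones de signos, y no hay arista entre $v^+$ y $v^-$ pues $\Gamma$ es simplicial y no tiene lazos. Ahora bien, $H_u$ y $H_v$ se interosculan s\'olo si se cruzan, y $H_u, H_v$ se cruzan si y solo si existe un cuadrado $e_u\times e_v$, es decir, si y solo si $\{u,v\}\in E$. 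Pero en ese caso \emph{toda} elecci\'on de extremos $u^{\epsilon}, v^{\delta}$ es adyacente en $\link(w)$, de modo que cualesquiera $1$-cubos duales a $H_u$ y a $H_v$ en $w$ s\'i bordean la esquina de un cuadrado; por lo tanto no hay osculaci\'on, y no puede haber interosculaci\'on. La \'unica obstrucci\'on real de la demostraci\'on es de car\'acter organizativo: hay que establecer con cuidado el diccionario entre generadores, $1$-cubos, cubos-toro e hiperplanos —en particular, convencerse de que cada hiperplano es dual a un solo $1$-cubo y de que $\link(w)$ codifica exactamente los cruces permitidos por $\Gamma$— tras lo cual las cuatro condiciones se siguen de manera esencialmente inmediata.
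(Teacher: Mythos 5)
Tu propuesta es correcta y sigue esencialmente el mismo camino que la demostraci\'on (esbozada) del art\'iculo: el punto clave en ambos casos es que cada hiperplano $H_v$ es dual a un \'unico $1$-cubo $e_v$, de donde la bilateralidad sale de la estructura de conmutador/toro de los cuadrados, el encaje de que las coordenadas de cada cubo llevan generadores distintos, y la ausencia de osculaciones de la unicidad del $1$-cubo dual. La \'unica diferencia es de nivel de detalle: donde el art\'iculo despacha la interosculaci\'on con la misma frase que la autoosculaci\'on, t\'u la verificas expl\'icitamente en $\link(w)$, observando que las cuatro esquinas del toro $e_u \times e_v$ realizan todas las combinaciones de extremos $u^{\epsilon}, v^{\delta}$ cuando $\{u,v\} \in E$; esto completa un detalle que el esbozo del art\'iculo omite, pero no constituye una ruta distinta.
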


\begin{proof}[Sketch]
Primero notemos que cada hiperplano de $R$ es dual a un \'unico 1-cubo. Esto es porque los \'unicos 1-cubos en $R$ corresponden a los generadores del gaar.
Los hiperplanos tiene dos lados porque cada 2-cubo en $R$ corresponde a un conmutador en la presentaci\'on y est\'an encajados porque los conmutadores est\'an asociados a generadores distintos (o sea, no estamos etiquetando ning\'un cuadrado con un mismo generador en los cuatro lados y ning\'un par de generadores aparece en m\'as de un cuadrado).
Como cada hiperplano solo es dual a un 1-cubo, no puede haber ni autoosculaci\'on ni interosculaci\'on.
\end{proof}

Suele ser el caso que un complejo cubular npc $X$ falla en ser especial, pero tiene una cubierta de grado finito que s\'i lo es. Si esto ocurre, decimos que $X$ es \emph{virtualmente especial}. 

\begin{figure}[h!]
\centerline{\includegraphics[scale=0.6]{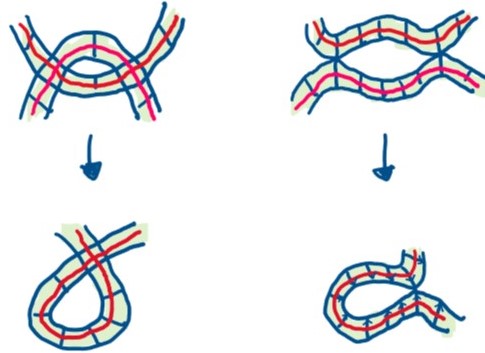}}
\caption{Dos maneras de levantar patolog\'ias a no-patolog\'ias en cubiertas de grado 2.}
\label{fig:special2}
\end{figure}

\begin{lemma}\label{lem:projects}Si $A \looparrowright B$ es una isometr\'ia local y $B$ es especial, entonces $A$ es especial. En particular, todo espacio cubriente $\widehat{B} \rightarrow B$ de un complejo cubular especial $B$ es tambi\'en especial.
\end{lemma}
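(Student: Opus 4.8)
The plan is to exploit two features of $\varphi$: that it is \emph{combinatorial} (a cubical homeomorphism on each cube, so it carries $n$-cubes to $n$-cubes and, consequently, midcubes to midcubes and $1$-cubes dual to a hyperplane to $1$-cubes dual to its image), and that it is a \emph{local isometry}. From the combinatorial property I would first record the bookkeeping: since a hyperplane $H$ of $A$ is a connected union of midcubes and $\varphi$ is continuous, $\varphi(H)$ is connected and lands inside a single hyperplane $\varphi_*(H)$ of $B$; moreover $\varphi$ sends the $1$-cubes dual to $H$ to $1$-cubes dual to $\varphi_*(H)$, and it carries the midcubes sitting inside one cube $c$ of $A$ to the midcubes inside the single cube $\varphi(c)$ of $B$. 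The strategy is then to show that each of the four forbidden pathologies in $A$ would be pushed forward by $\varphi$ to the same pathology for $\varphi_*(H)$ (or for the pair $\varphi_*(U),\varphi_*(V)$) in $B$, contradicting that $B$ is special; since $A$ is npc by hypothesis (local isometries are defined between npc complexes), this yields that $A$ is special.

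First I would treat two-sidedness by pulling back orientations: as $B$ is special, $\varphi_*(H)$ is two-sided, so its dual $1$-cubes carry a consistent orientation, and I orient each dual $1$-cube $e$ of $H$ so that $\varphi(e)$ receives the chosen orientation. Consistency is a condition checked inside each square through which $H$ passes, and since $\varphi$ maps such a square $c$ to a square $\varphi(c)$ of $B$ compatibly, the pulled-back orientation is again consistent; hence $H$ is two-sided. Embeddedness is even more direct: if $H$ crossed itself, some cube $c$ of $A$ would contain two midcubes of $H$, and because $\varphi|_c$ is a homeomorphism onto $\varphi(c)$ these map to two distinct midcubes of $\varphi_*(H)$ inside $\varphi(c)$, so $\varphi_*(H)$ would self-cross, contradicting specialness of $B$.

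The osculation conditions are where local injectivity and, crucially, the local-isometry hypothesis enter. For self-osculation, suppose $H$ is dual to two distinct coherently directed $1$-cubes $e,f$ meeting at a common (say initial) vertex $v$. Local injectivity of $\varphi$ forces $\varphi(e)\neq\varphi(f)$, and by the orientation pullback they are again coherently directed dual $1$-cubes of $\varphi_*(H)$ meeting at $\varphi(v)$; thus $\varphi_*(H)$ self-osculates in $B$, a contradiction. For interosculation, suppose $U,V$ cross and are dual to $1$-cubes $e,f$ at a vertex $p$ that do \emph{not} span the corner of a square. Crossing pushes forward through the common cube, and $\varphi_*(U)\neq\varphi_*(V)$ (otherwise the crossing cube would already make $B$ fail embeddedness, which we have excluded). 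The delicate point is to guarantee that $\varphi(e),\varphi(f)$ still do not span the corner of a square at $\varphi(p)$: this is exactly the contrapositive of the defining property of a local isometry, which says that if $\varphi(e),\varphi(f)$ bounded a square corner then $e,f$ would too. Hence $\varphi_*(U),\varphi_*(V)$ cross and osculate, i.e. interosculate, in $B$, again a contradiction.

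I expect the main obstacle to be precisely this last step: ruling out that $\varphi$ \emph{creates} a square in $B$ joining the images of two osculating edges. A mere combinatorial immersion could do so, and then an interosculation of $A$ would project to an honest crossing of $B$, breaking the argument; it is only the local-isometry condition that forbids this, which is why the hypothesis cannot be weakened to ``immersion''. Finally, for the ``in particular'' clause I would invoke that every covering map $\widehat B \to B$ of npc cube complexes is a local isometry (recorded earlier as an example), so the general statement applies verbatim and $\widehat B$ is special whenever $B$ is.
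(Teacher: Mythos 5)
Your proposal is correct and follows essentially the same strategy as the paper's proof: pushing each hyperplane pathology of $A$ forward through the combinatorial map $\varphi$, with the local-isometry hypothesis invoked exactly where the paper invokes it (to rule out that the images of two osculating edges span the corner of a square in $B$), and the covering-space clause handled by recalling that coverings are local isometries. In fact your write-up goes slightly further than the paper's sketch, since you also treat the two cases left there as exercises (two-sidedness, via pulling back a consistent orientation from $\varphi_*(H)$, and self-osculation, via local injectivity at the shared vertex).
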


\begin{proof}[Sketch]
Supongamos que $\varphi: A \rightarrow B$ es una isometr\'ia local.
Si $A$ no es especial, entonces alguna de las cuatro patolog\'ias descritas al principio de esta secci\'on ocurre en $A$. Si, por ejemplo, $A$ tiene un hiperplano $H$ que se autointerseca en un cubo $c$, entonces $\varphi(c)$ es un cubo en $B$ en donde se cruzan dos semicubos pertenecientes al hiperplano $\varphi(H)$, por lo que $B$ no puede ser especial; si, en cambio, hay un par de hiperplanos  $H, H'$ en $A$ que se interosculan en un v\'ertice $p$, entonces $H$ y $H'$ se cruzan y son duales a aristas $e,f$ en $p$ que no se encuentran en la esquina de un 2-cubo en $A$, por lo que los hiperplanos   $\varphi(H)$ y $\varphi(H')$ se cruzan y son duales a aristas $\varphi(e),\varphi(f)$ en $\varphi(p)$. Las aristas $\varphi(e),\varphi(f)$  no se pueden encontrar en la esquina de un 2-cubo en $B$, pues esto contradecir\'ia que $\varphi$  es una isometr\'ia local. As\'i que $\varphi(H)$ y $\varphi(H')$  seinterosculan, y  concluimos nuevamente que $B$ no puede ser especial.
Analizar las otras dos posibles patolog\'ias queda como ejercicio para el lector.
\end{proof}

\begin{theorem}\label{thm:specialvetti} Un complejo cubular npc $X$ es especial si y solo si existe una isometr\'ia local $X \rightarrow R$ donde $R$ es un complejo de Salvetti. 
\end{theorem}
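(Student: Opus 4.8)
The statement is a biconditional, and one direction is essentially free. The plan is to dispatch the ``if'' direction immediately using the results already established, and then, for the ``only if'' direction, to construct an explicit local isometry from a special $X$ into the Salvetti complex of a right-angled Artin group read off from the hyperplanes of $X$. For the direction ($\Leftarrow$): suppose $\varphi\colon X \looparrowright R$ is a local isometry with $R$ a Salvetti complex. By Theorem~\ref{thm:salvspe} the complex $R$ is special, and by Lemma~\ref{lem:projects} any local isometry into a special complex has special domain; hence $X$ is special, and that is the entire argument.

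For the direction ($\Rightarrow$), let $X$ be special. I would let $\Gamma$ be the \emph{gr\'afica de cruces} of $X$: its vertices are the hyperplanes of $X$, and two vertices are joined by an edge exactly when the corresponding hyperplanes cross somewhere in $X$. Because the hyperplanes of $X$ are embedded (condition~\eqref{en:sp2}), no hyperplane crosses itself, so $\Gamma$ has no loops and is genuinely simplicial; let $R$ be the Salvetti complex of $G(\Gamma)$ provided by Theorem~\ref{thm:salvettis}, with its unique $0$-cube $w$. Since every hyperplane is two-sided (condition~\eqref{en:sp1}), I can orient all $1$-cubes dual to a given hyperplane $H$ consistently, and then define $\varphi\colon X\to R$ on the $1$-skeleton by sending every $0$-cube to $w$ and every oriented $1$-cube dual to $H$ to the oriented loop $x_{v_H}$ of $R$. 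To extend $\varphi$ over higher cubes, note that a square of $X$ has its two pairs of parallel edges dual to two hyperplanes $H\neq H'$ (distinct by embeddedness) that cross in that square; hence $\{v_H,v_{H'}\}$ is an edge of $\Gamma$, the generators $x_{v_H},x_{v_{H'}}$ commute, and the square maps to the corresponding square of $R$. More generally the $n$ hyperplanes dual to an $n$-cube of $X$ pairwise cross, producing a complete subgraph $K_n\subset\Gamma$ and hence an $n$-toro ($n$-cube at $w$) in $R$ to receive it, so $\varphi$ extends combinatorially over all of $X$.

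It then remains to verify that $\varphi$ is a local isometry, and this is where the two ``osculation'' conditions enter. Local injectivity reduces to injectivity of $\link(v)\to\link(w)$ on vertices, i.e.\ distinct directed edges at a $0$-cube $v$ must map to distinct directed edges at $w$; two such edges collide only if they are dual to the same hyperplane with the same direction at $v$, which is precisely direct self-osculation, excluded by condition~\eqref{en:sp3}. For the corner condition, suppose $\varphi(e),\varphi(f)$ bound the corner of a square around $w$; then their labels $x_{v_H},x_{v_{H'}}$ commute with $H\neq H'$, so $H$ and $H'$ cross in $X$. If $e,f$ did \emph{not} bound the corner of a square around $v$, then $H$ and $H'$ would cross and yet be dual to non-adjacent edges at $v$ — exactly inter-osculation, excluded by condition~\eqref{en:sp4}. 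Hence $e,f$ do bound a square corner at $v$, and $\varphi$ is a local isometry with the required properties.

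The main obstacle I anticipate is not any single deep step but the bookkeeping in the extension and local-isometry verifications: one must pin down the orientation conventions carefully so that $\varphi$ is genuinely well defined on directed edges (using that each $1$-cube is dual to a unique hyperplane and two-sidedness gives a global direction), and one must match each pathology to the correct clause in the definition of local isometry. Keeping the roles of the four special conditions cleanly separated — two-sidedness for consistent orientations, embeddedness for simpliciality of $\Gamma$ and for squares mapping to squares, absence of self-osculation for local injectivity, and absence of inter-osculation for the square-corner condition — is the real crux of the argument.
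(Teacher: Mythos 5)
Your proposal is correct and takes essentially the same route as the paper: the ($\Leftarrow$) direction via Lemma~\ref{lem:projects} combined with Theorem~\ref{thm:salvspe}, and the ($\Rightarrow$) direction by forming the crossing graph $\Gamma$ of hyperplanes (the paper's \emph{gr\'afica de incidencia}), orienting dual $1$-cubes using two-sidedness, extending combinatorially over higher cubes, and then matching no-self-osculation to local injectivity and no-inter-osculation to the square-corner condition. Your write-up is in fact somewhat more detailed than the paper's own proof, which leaves several of these verifications implicit.
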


\begin{proof}
Una implicaci\'on se sigue del lema~\ref{lem:projects}.
Para la otra implicaci\'on, mostraremos que $R=R(\Gamma)$ donde $\Gamma$  es la \emph{gr\'afica de incidencia} de $X$. Esto es, una gr\'afica simplicial con un v\'ertice $v_H$ por cada hiperplano $H$ de $X$ y donde existe una arista entre v\'ertices $v_H$ y $v_{H'}$ si y solo si los hiperplanos $H$ y $H'$ se cruzan.
Para definir la isometr\'ia local, vamos a etiquetar cada arista de $R$ con el hiperplano al que es dual, que a su vez est\'a asociado a un hiperplano de $X$. Orientamos ahora las aristas (en $X$) para que la frontera de cada cuadrado sea un conmutador, es decir, queremos que lados opuestos de cuadrados en $X$ est\'en orientados consistententemente. Esto se puede porque estamos asumiendo que $X$ es especial y por lo tanto los hiperplanos tienen dos lados.
Esto nos da un mapeo combinatorio $X^{(1)} \rightarrow R$ que podemos extender a  $X \rightarrow R$. Afirmamos ahora que la extensi\'on es una isometr\'ia local.
En efecto, $X \rightarrow R$ es una inmersi\'on porque los hiperplanos de $X$ no se autoosculan, y cumple la condici\'on de que las esquinas de cuadrados en la imagen vengan de esquinas de cuadrados en $X$ porque los hiperplanos de $X$ no se interosculan.
\end{proof}

\begin{corollary}\label{cor:espsalv} Un complejo cubular npc $X$ es especial si y solo si $\pi_1X$ es el subgrupo de un gaar.
\end{corollary}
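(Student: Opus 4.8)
El plan es demostrar las dos implicaciones por separado, apoy\'andome en la caracterizaci\'on del teorema~\ref{thm:specialvetti} y en el comportamiento de la especialidad bajo cubrientes (lema~\ref{lem:projects}). La implicaci\'on directa ser\'a casi inmediata; la rec\'iproca requerir\'a construir \emph{alg\'un} modelo cubular especial con el grupo fundamental correcto, y la observaci\'on clave ser\'a que, al ser los complejos npc espacios clasificantes, la especialidad de $X$ en este enunciado debe leerse como una propiedad del grupo $\pi_1 X$ y no de la estructura cubular concreta de $X$.

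Para la implicaci\'on directa, supondr\'ia que $X$ es especial e invocar\'ia el teorema~\ref{thm:specialvetti} para obtener una isometr\'ia local $X \rightarrow R$, donde $R$ es el complejo de Salvetti de un gaar $G(\Gamma)$. Luego aplicar\'ia la conclusi\'on~\eqref{it:isom1} del teorema~\ref{thm:local}, que garantiza que el mapeo inducido $\pi_1 X \rightarrow \pi_1 R$ es inyectivo. Finalmente, por el teorema~\ref{thm:salvettis}, $\pi_1 R \cong G(\Gamma)$, de modo que $\pi_1 X$ queda exhibido como subgrupo del gaar $G(\Gamma)$.

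Para la rec\'iproca, partir\'ia de una inclusi\'on $\pi_1 X \hookrightarrow G(\Gamma)$. Por el teorema~\ref{thm:salvettis}, $G(\Gamma) \cong \pi_1 R$ para el complejo de Salvetti $R$ asociado, y por el teorema~\ref{thm:salvspe}, $R$ es especial. El subgrupo $\pi_1 X < \pi_1 R$ corresponde a un espacio cubriente $p \colon \widehat R \rightarrow R$ con $\pi_1 \widehat R \cong \pi_1 X$, y el lema~\ref{lem:projects} asegura que $\widehat R$, por ser cubriente de un complejo especial, es tambi\'en especial. Obtenemos as\'i un complejo cubular especial $\widehat R$ con $\pi_1 \widehat R \cong \pi_1 X$.

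El principal obst\'aculo es conceptual: ``ser especial'' es, a priori, una propiedad de la estructura cubular (dos-lados, encajamiento, ausencia de auto- e interosculaci\'on son caracter\'isticas de c\'omo se pegan los cuadrados), no un invariante de $\pi_1 X$. De hecho, una banda de M\"obius cubulada es npc y tiene $\pi_1 \cong \integers$ --- subgrupo del gaar $\integers$ --- pero su \'unico hiperplano tiene un solo lado, de modo que el enunciado literal no puede sostenerse cubo por cubo. La forma de cerrar la brecha es recordar que los complejos cubulares npc son espacios clasificantes para sus grupos fundamentales: $X$ y $\widehat R$ son ambos $K(\pi_1 X,1)$ y por lo tanto homot\'opicamente equivalentes. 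Interpretado a nivel de grupos, que es como debe leerse el enunciado, esto establece que $\pi_1 X$ es subgrupo de un gaar si y solo si es el grupo fundamental de un complejo cubular especial, completando la demostraci\'on.
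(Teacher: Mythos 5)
Tu demostraci\'on es correcta y sigue en esencia la misma ruta que el art\'iculo: la implicaci\'on directa es id\'entica (teorema~\ref{thm:specialvetti}, luego la inyectividad en $\pi_1$ dada por el teorema~\ref{thm:local}, y el teorema~\ref{thm:salvettis} para identificar $\pi_1 R$ con el gaar). La diferencia est\'a en la rec\'iproca: el art\'iculo la despacha citando solamente el teorema~\ref{thm:salvspe}, mientras que t\'u explicitas el paso que ah\'i queda impl\'icito, a saber, tomar la cubierta $\widehat R \rightarrow R$ correspondiente al subgrupo $\pi_1 X < \pi_1 R$ y aplicar el lema~\ref{lem:projects} para concluir que $\widehat R$ es especial. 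Tu observaci\'on de que el enunciado literal no puede sostenerse al nivel del complejo es acertada y se\~nala una imprecisi\'on real del corolario: una banda de M\"obius cubulada es npc y tiene $\pi_1 \cong \integers$, que es subgrupo de un gaar, pero su hiperplano central tiene un solo lado, as\'i que la rec\'iproca s\'olo puede producir \emph{alg\'un} complejo cubular especial con el grupo fundamental correcto, no la especialidad de $X$ mismo. Esa lectura a nivel de grupos es exactamente la que el art\'iculo adopta inmediatamente despu\'es del corolario, al definir un grupo especial como uno isomorfo al grupo fundamental de un complejo cubular especial, de modo que tu versi\'on es la formulaci\'on precisa de lo que el corolario pretende decir.
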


\begin{proof}
Una direcci\'on es el teorema~\ref{thm:specialvetti} m\'as el teorema~\ref{thm:local}, la otra es el teorema~\ref{thm:salvspe}. 
\end{proof}

Hay que detenerse a apreciar tantito la importancia del corolario anterior: nos esta diciendo, primero que nada, que empezando con una clase de complejos cubulares definidos a priori de una manera que se antoja b\'asicamente arbitraria, llegamos a una caracterizaci\'on algebraica en terminos de una clase de grupos relativamente bien entendida y que satisface propiedades muy deseables. Esta caracterizaci\'on, adem\'as, nos esta diciendo que `ser especial' es una propiedad que los grupos satisfacen \textbf{de manera abstracta}.

Esto motiva la siguiente definici\'on:

\begin{definition}
Un grupo $G$ es \emph{especial} si es isomorfo al grupo fundamental de un complejo cubular especial. 
\end{definition}

M\'as en general, decimos que $G$ es \emph{virtualmente especial} si tiene un subgrupo de \'indice finito isomorfo al grupo fundamental de un complejo cubular especial. Vale la pena mencionar que en el contexto hiperb\'olico ser virtualmente especial es independiente de la cubulaci\'on:

\begin{theorem}
Si $X$ y $Y$ son complejos cubulares npc compactos y sus grupos fundamentales son isomorfos e hiperb\'olicos, entonces
$X$ es virtualmente especial si y solo si $Y$ es virtualmente especial.
\end{theorem}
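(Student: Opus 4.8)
The plan is to show that, for a compact npc cube complex $Z$ whose fundamental group $G = \pi_1 Z$ is hyperbolic, the property ``$Z$ is virtually special'' is equivalent to an \emph{intrinsic} property of the group $G$ alone: that every quasiconvex subgroup of $G$ is separable. Once this is established the theorem is immediate, for $\pi_1 X \cong \pi_1 Y \cong G$, and quasiconvexity and separability of subgroups of a hyperbolic group depend neither on a generating set nor on a cubulation; hence the intrinsic property holds for $G$ irrespective of whether we read $G$ as $\pi_1 X$ or as $\pi_1 Y$, and so $X$ is virtually special if and only if $Y$ is. I emphasize that I would deliberately avoid invoking Agol's theorem -- which would render both $X$ and $Y$ virtually special outright and trivialize the equivalence -- and instead prove the sharper statement that virtual specialness is an invariant of $G$.

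One direction is comparatively soft. If $Z$ is virtually special it has a finite cover $\widehat Z$ that is special, and by Corollary~\ref{cor:espsalv} we have $\pi_1 \widehat Z \hookrightarrow G(\Gamma)$ into a gaar, so $G$ is virtually a subgroup of a gaar. From here one invokes the Haglund--Wise machinery of~\cite{HaglundWiseSpecial}: a virtually special hyperbolic group has all of its quasiconvex subgroups separable. Concretely, one realizes a given quasiconvex subgroup $H < G$ by a local isometry $W \looparrowright Z$ from a compact cube complex, as provided by Proposition~\ref{cor:loci}, lifts $W$ to the special finite cover, and applies the canonical completion and retraction to separate $H$ from any prescribed element outside it.

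The substantive direction is the converse, and it is where hyperbolicity is indispensable. Assume every quasiconvex subgroup of $G$ is separable. Each hyperplane $H$ of $Z$ has convex support $N(H)$, and since $G$ is hyperbolic the associated subgroup $\pi_1 N(H) < G$ is quasiconvex; the same holds for the subgroups governing crossings and osculations. The four pathologies obstructing specialness -- one-sidedness, self-crossing, self-osculation, and inter-osculation -- are each detected by the failure of one of these quasiconvex subgroups (or a suitable double coset) to be separated from a finite set of elements. Separability therefore lets us pass to a finite cover of $Z$ resolving each pathology; since $Z$ is compact there are finitely many hyperplanes and finitely many pathologies, so a single finite cover $\widehat Z$ can be arranged to be special. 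This is precisely the Haglund--Wise characterization of virtual specialness via separability, which I would cite rather than reprove, as the relevant separability theory has not been developed in this text.

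The main obstacle is exactly this last step: translating the geometric pathologies of a cubulation into separability statements, and then using separability to build the special cover. The conceptual point worth stressing is \emph{why} hyperbolicity is needed. In the non-hyperbolic setting the hyperplane subgroups need not be quasiconvex, and separability of the particular subgroups thrown up by a given cubulation is not an invariant of $G$; hyperbolicity is what forces the relevant subgroups to be quasiconvex and thereby makes ``every quasiconvex subgroup is separable'' -- a genuinely intrinsic property of $G$ -- the correct bridge between the two cubulations $X$ and $Y$.
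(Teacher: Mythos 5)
The paper gives no proof of this theorem: it is stated as an imported result of the Haglund--Wise theory \cite{HaglundWiseSpecial}, so there is no internal argument to compare yours against. Judged on its own terms, your proposal is correct and is the standard (pre-Agol) argument: you reduce virtual specialness of a compact npc cube complex with hyperbolic fundamental group to the intrinsic group property ``every quasiconvex subgroup is separable,'' which manifestly does not depend on the cubulation, and your two directions are the right ones --- canonical completion and retraction applied to the local isometries furnished by Proposition~\ref{cor:loci} for the forward direction, and the separability criterion of Theorem~\ref{thm:doublesep} for the converse, using that hyperplane subgroups are quasiconvex because their supports are locally convex (Lemma~\ref{lem:isosop} and Theorem~\ref{thm:local}) and $X$ is compact. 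Your decision not to invoke Agol's theorem is also the right one: under the hypotheses it would make both sides of the equivalence unconditionally true and so prove nothing about invariance as such.

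One point deserves to be flagged, because it is where the real content hides. Theorem~\ref{thm:doublesep} demands not only that each hyperplane subgroup $\pi_1H$ be separable, but also that each double coset $\pi_1H\pi_1J$ of crossing hyperplanes be separable, and a double coset is not a subgroup: its separability does \emph{not} follow formally from ``every quasiconvex subgroup of $G$ is separable.'' In the hyperbolic setting this implication is a genuine theorem, due to Minasyan (in a hyperbolic group all of whose quasiconvex subgroups are separable, products of finitely many quasiconvex subgroups are closed in the profinite topology), and it is exactly what allows the Haglund--Wise characterization to be phrased purely in terms of subgroup separability. Since you cite that characterization wholesale rather than reproving it, your proof is complete as written; but your parenthetical ``(or a suitable double coset)'' glosses the one step of the converse that does not reduce to bookkeeping, and any attempt to fill in your sketch would have to pass through Minasyan's theorem or an equivalent.
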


\subsection{Separabilidad}

En topolog\'ia de dimensiones bajas, el problema de `promover' o `mejorar' inmersiones de ciertas subvariedades (i.e., curvas o  superficies) a encajes es fundamental. La forma mas cl\'asica de este problema se concierne con levantar inmersiones a encajes en cubrientes de grado finito. Algebraicamente esto se relaciona con la siguiente propiedad:   

\begin{definition} Un grupo $G$ es \emph{residualmente finito} si para todo $g \in G - \{1\}$ hay un cociente finito de $G$ en el que la imagen de $g$ no es trivial. Esto es, $G$ es residualmente finito si para todo $g \in G - \{1\}$, existe $H<G$ tal que $[G:H]< \infty$ y $g\notin H$.
\end{definition}

M\'as en general:

\begin{definition}
Un subgrupo $H < G$ es \emph{separable} si para cada $g \in G-H$, existe $G' < G$ tal que $[G:G']<\infty$, $g\notin G'$ y $H< G'$.
\end{definition}

\begin{remark}
El subgrupo $\{1_G\}$ es separable si y solo si $G$ es residualmente finito.
\end{remark}

Marshall Hall demostr\'o en 1949 que los grupos libres satisfacen buenas propiedades de separabilidad. Recordemos que un subgrupo  $H \subset G$ es un \emph{retracto} si existe un endomorfismo $f$ de G tal que $f^2 = f$  y $f(G)=H$.

\begin{theorem}\label{thm:freesep}
Los grupos libres tienen las siguientes propiedades:

\begin{enumerate}
\item Son residualmente finitos,
\item todos sus subgrupos finitamente generados son retractos  de subgrupos de \'indice finito,
\item todos sus subgrupos finitamente generados son separables.
\end{enumerate}

\end{theorem}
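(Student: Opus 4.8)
El plan es traducir todo el enunciado a la topolog\'ia de gr\'aficas, usando que las inmersiones de gr\'aficas son isometr\'ias locales y por lo tanto $\pi_1$-inyectivas (teorema~\ref{thm:local}). Realizamos el grupo libre como $F=\pi_1(B,\ast)$, donde $B$ es un ramillete finito de c\'irculos cuyas aristas est\'an etiquetadas por los generadores. Dado un subgrupo finitamente generado $H<F$, el n\'ucleo (\emph{core}) del cubriente asociado a $H$ es una gr\'afica finita $\Gamma$ dotada de una inmersi\'on $\Gamma\looparrowright B$ con $\pi_1(\Gamma,v_0)=H$; esto tambi\'en es un caso particular de la proposici\'on~\ref{cor:loci}, pues los grupos libres son hiperb\'olicos y sus subgrupos finitamente generados son cuasiconvexos.

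El paso t\'ecnico central es el siguiente lema de \emph{completaci\'on}: toda inmersi\'on finita $\Delta\looparrowright B$ se extiende a un cubriente finito $\hat B\to B$ que contiene a $\Delta$ como subgr\'afica. Para verlo, fijamos una etiqueta (generador) $a$; las aristas de $\Delta$ etiquetadas con $a$ definen una biyecci\'on parcial $\pi_a$ del conjunto finito $V(\Delta)$ (la condici\'on de inmersi\'on garantiza que est\'a bien definida y es inyectiva). Como el complemento del dominio y el complemento del rango de $\pi_a$ tienen la misma cardinalidad, podemos extender $\pi_a$ a una permutaci\'on de $V(\Delta)$, a\~nadiendo las aristas $a$ faltantes. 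Haciendo esto para cada etiqueta obtenemos una gr\'afica $\hat B$ en la que cada v\'ertice tiene exactamente una arista entrante y una saliente de cada etiqueta, es decir, $\hat B\to B$ es un cubriente de $|V(\Delta)|$ hojas, y en particular de \'indice finito.

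Con esto, la parte~(2) es inmediata: completamos $\Gamma$ a un cubriente finito $\hat B$ y ponemos $K=\pi_1(\hat B,v_0)$, que es de \'indice finito y contiene a $H=\pi_1(\Gamma,v_0)$. Como $\Gamma\subset\hat B$ es una subgr\'afica conexa, es un retracto topol\'ogico: basta retraer cada arista a\~nadida a un camino en $\Gamma$ (que existe por conexidad), obteniendo $r\colon\hat B\to\Gamma$ con $r|_\Gamma=\mathrm{id}$. El mapeo inducido $r_*\colon K\to H$ satisface $r_*^2=r_*$ y tiene imagen $H$, as\'i que $H$ es un retracto del subgrupo de \'indice finito $K$.

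Para la parte~(3) sea $g\in F-H$, representado por una palabra reducida $w$. En el cubriente $\hat B_H\to B$ asociado a $H$ (posiblemente infinito), el levantamiento $\tilde w$ de $w$ que empieza en $v_0$ es un camino reducido que termina en un v\'ertice $p\neq v_0$, precisamente porque $g\notin H$. La gr\'afica $\Delta=\Gamma\cup\tilde w\subset\hat B_H$ es finita e inmersa en $B$; la completamos a un cubriente finito $\hat B$. Entonces $G'=\pi_1(\hat B,v_0)$ es de \'indice finito y contiene a $H$, y como el levantamiento de $w$ en el cubriente $\hat B$ es \'unico y coincide con $\tilde w\subset\Delta\subset\hat B$, este termina en $p\neq v_0$; por lo tanto $w$ no se levanta a un lazo basado en $v_0$ y $g\notin G'$. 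Esto demuestra la separabilidad, y la parte~(1) es el caso $H=\{1\}$. El punto m\'as delicado es justamente esta \'ultima parte: un cubriente finito arbitrario que contenga a $\Gamma$ podr\'ia convertir accidentalmente a $g$ en un lazo y arruinar la separaci\'on, y el truco que resuelve la obstrucci\'on es incorporar el levantamiento $\tilde w$ a la gr\'afica \emph{antes} de completar, de modo que la unicidad de levantamientos en el cubriente final garantice que $w$ no se cierre.
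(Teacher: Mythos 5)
Tu propuesta es correcta, pero no hay una demostraci\'on en el art\'iculo con la cual compararla: el teorema~\ref{thm:freesep} se enuncia ah\'i sin demostraci\'on, atribuido a Marshall Hall (1949). Lo que das es la demostraci\'on cl\'asica de Stallings v\'ia gr\'aficas, y todos los pasos esenciales son v\'alidos: el n\'ucleo del cubriente asociado a $H$ es una gr\'afica finita inmersa en el ramillete $B$ (de paso, la invocaci\'on de la proposici\'on~\ref{cor:loci} es prescindible ---y presupone el hecho no trivial de que los subgrupos finitamente generados de un grupo libre son cuasiconvexos---, pero tu construcci\'on directa basta); el lema de completaci\'on est\'a bien justificado, pues extender cada biyecci\'on parcial $\pi_a$ a una permutaci\'on de $V(\Delta)$ es posible precisamente porque $V(\Delta)$ es finito; la retracci\'on de la parte~(2) est\'a bien definida porque la completaci\'on a\~nade aristas pero no v\'ertices, as\'i que toda arista nueva tiene ambos extremos en $\Gamma$ y puede mandarse a un camino en $\Gamma$; y en la parte~(3) identificas correctamente el punto delicado: incorporar el levantamiento $\tilde w$ a la gr\'afica \textbf{antes} de completar, de modo que la unicidad de levantamientos de caminos en $\hat B$ garantice $g \notin G'$. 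La parte~(1) como caso $H=\{1\}$ tambi\'en es correcta, y coincide con la observaci\'on del art\'iculo de que la separabilidad del subgrupo trivial equivale a la finitud residual. En cuanto a lo que este enfoque aporta en el contexto del art\'iculo: es elemental y autocontenido (solo usa teor\'ia de cubrientes de gr\'aficas y el hecho de que las inmersiones de gr\'aficas son isometr\'ias locales), y adem\'as el lema de completaci\'on es el caso de dimensi\'on $1$ de la \emph{completaci\'on can\'onica y retracci\'on} de Haglund--Wise para complejos cubulares especiales, que es justamente la maquinaria detr\'as de los resultados de separabilidad que el art\'iculo enuncia despu\'es (como el teorema~\ref{thm:doublesep}); en ese sentido, tu demostraci\'on no solo llena el hueco, sino que anticipa la estrategia de toda la secci\'on sobre separabilidad.
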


Peter Scott~\cite{Scott78} extendi\'o este resultado a grupos de superficies:

\begin{theorem}\label{teo:scott}
Si $G=\pi_1 S$ para alguna superficie $S$, entonces todo subgrupo finitamente generado $H < G$ es separable.
\end{theorem}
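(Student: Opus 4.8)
The idea is to move the problem into the world of special cube complexes and then exhibit every finitely generated subgroup as a \emph{retracto virtual}, since retracts of residually finite groups are separable. First I would dispose of the degenerate cases $\chi(S)\geq 0$: if $S$ is $S^2$ or $\mathbb{RP}^2$ then $\pi_1 S$ is finite and every subgroup is trivially separable, and if $S$ is the toro or the botella de Klein then $G$ is virtually $\integers^2$, hence polic\'iclico and therefore LERF by a direct argument. So I may assume $\chi(S)<0$, whence $G=\pi_1 S$ es hiperb\'olico. By Theorem~\ref{thm:surface-cube} the surface is an npc cube complex, and surface groups are virtually especiales, so $G$ contains a finite-index subgroup $G'=\pi_1 X$ with $X$ a compact special cube complex; note $G'$ is itself a surface group. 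Since being LERF is inherited by finite-index \emph{overgroups}, it suffices to prove that every finitely generated subgroup of $G'$ is separable.

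\emph{Cuasiconvexidad y realizaci\'on por isometr\'ias locales.} Let $H<G'$ be finitely generated. Every finitely generated subgroup of a hyperbolic surface group is cuasiconvexo: either $H$ has finite index (and is then already closed in the profinite topology), or the associated cover is an open surface, so $H$ is libre, and a free subgroup of a surface group is quasiconvex. By Proposici\'on~\ref{cor:loci} there is a local isometry $Y\to X$ with $Y$ compacto and $\pi_1 Y = H$, and by Theorem~\ref{thm:specialvetti} there is a local isometry $X\to R$ onto a complejo de Salvetti $R=R(\Gamma)$. Composing (a composite of local isometries is a local isometry) I obtain $Y\looparrowright R$ from a compact cube complex with $\pi_1 Y=H$, and by Theorem~\ref{thm:local} this realizes $H$ as the fundamental group of a convex, $\pi_1$-injective subcomplex of $\widetilde R$.

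\emph{Completaci\'on can\'onica y retracci\'on.} This is the crux. Given $g\in G'-H$, I would build a finite cover $\hat R\to R$ in which $Y$ embeds as a subcomplex together with a cellular retraction $\rho:\hat R\to Y$. The construction proceeds hyperplane-by-hyperplane: the $1$-skeleton of $R$ is a bouquet of loops, one per generator of the gaar, and over each loop the immersion $Y\looparrowright R$ restricts to an immersion of graphs. Here Marshall Hall's theorem (Theorem~\ref{thm:freesep}, realizing finitely generated subgroups of free groups as retractos of subgroups of finite index) completes each graph immersion to a finite cover admitting a retraction; the convexity and $\pi_1$-injectivity from Theorem~\ref{thm:local} guarantee that the local picture near each square is a product of trees, so these graph-level completions can be assembled compatibly over the squares of $R$. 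The result is a finite (special) cover $\hat R$ with a retraction $\rho$, so that $\rho_*\colon K:=\pi_1\hat R\to \pi_1 Y=H$ is a retraction onto $H$. Since $K$ is residually finite (e.g.\ as a subgroup of a gaar, Corollary~\ref{cor:espsalv}) and retracts of residually finite groups are separable, $H$ is separable in the finite-index subgroup $K$, hence in $G'$, hence in $G$.

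\emph{Principal obst\'aculo.} The hard part is precisely the canonical completion and retraction: one must verify that the graph-level completions produced by Theorem~\ref{thm:freesep} glue along the squares of $R$ into an honest covering space, and that the assembled $\rho$ is genuinely cellular and restricts to the identity on $Y$. This combinatorial bookkeeping---rather than any further group-theoretic input---is where all the work lies, and it is exactly the point at which the hypotheses ``$X$ especial'' and ``$Y\hookrightarrow\widetilde R$ convexo'' are indispensable, since they force the neighborhood of each hyperplane to be the product structure on which the free-group argument can run.
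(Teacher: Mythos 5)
Lo primero que debes saber es que el art\'iculo \textbf{no demuestra} este teorema: lo enuncia y lo atribuye a Scott~\cite{Scott78}, cuya demostraci\'on original es geom\'etrico-hiperb\'olica (se realiza al grupo de superficie dentro de un grupo de reflexiones de \'angulo recto actuando en $\mathbb{H}^2$ y se usa la convexidad de la teselaci\'on para producir los subgrupos de \'indice finito). Tu propuesta es, por lo tanto, necesariamente una ruta distinta: la demostraci\'on cubular moderna al estilo de Haglund--Wise~\cite{HaglundWiseSpecial}, v\'ia cuasiconvexidad de los subgrupos finitamente generados, realizaci\'on por isometr\'ias locales compactas (proposici\'on~\ref{cor:loci}), composici\'on con una isometr\'ia local $X \rightarrow R$ hacia un complejo de Salvetti (teorema~\ref{thm:specialvetti}), y completaci\'on can\'onica con retracci\'on para exhibir a $H$ como retracto virtual. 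La arquitectura es correcta, y de hecho es esencialmente la demostraci\'on del teorema que el art\'iculo enuncia sin prueba (``si $X$ es compacto y especial y $\pi_1X$ es hiperb\'olico, entonces todos los subgrupos cuasiconvexos de $\pi_1X$ son separables''); pudiste haber acortado considerablemente tu argumento invocando ese enunciado directamente, combinado con la especialidad virtual de la superficie (que es uno de los ejercicios del art\'iculo y admite una soluci\'on elemental por cubrientes, sin necesidad del teorema de Agol, que aqu\'i es un martillo desproporcionado). Observa tambi\'en que el enunciado dice ``alguna superficie $S$'': si $S$ no es cerrada, $\pi_1 S$ es libre y el teorema~\ref{thm:freesep} ya resuelve ese caso; tu an\'alisis de casos solo contempla superficies cerradas.

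Tres puntos concretos que debes reforzar. Primero, tu justificaci\'on de la cuasiconvexidad es enga\~nosa tal como est\'a escrita: ser libre \textbf{no} implica ser cuasiconvexo en un grupo hiperb\'olico general (las fibras de los grupos libre-por-c\'iclico hiperb\'olicos son libres, finitamente generadas y no cuasiconvexas); para grupos de superficie el hecho es cierto, pero requiere el argumento del n\'ucleo convexo compacto para subgrupos finitamente generados de \'indice infinito, no la libertad por s\'i sola. Segundo, tu paso final necesita que los gaars (o al menos $K=\pi_1\hat R$) sean residualmente finitos; el corolario~\ref{cor:espsalv} no da eso por s\'i mismo, y el art\'iculo nunca lo enuncia --- es un insumo externo (por ejemplo, la linealidad de los gaars), al igual que la invariancia de la separabilidad bajo conmensurabilidad que usas para pasar de $G'$ a $G$. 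Tercero, como t\'u mismo reconoces, toda la dificultad vive en la completaci\'on can\'onica y la retracci\'on, que solo bosquejas; tu descripci\'on (completar sobre cada lazo del ramillete usando Hall y ensamblar sobre los cuadrados usando la estructura de producto) es la correcta, pero sin llevarla a cabo el argumento es un plan, no una demostraci\'on. Nada de esto es irreparable: la estrategia es s\'olida y es la reprueba cubular est\'andar del teorema de Scott, lo cual contrasta \'utilmente con la cita del art\'iculo, pues obtiene separabilidad como corolario de una teor\'ia general en lugar de un argumento espec\'ifico para superficies.
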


Volviendo al contexto geom\'etrico, el teorema anterior implica que si $\alpha$ es un representante de una clase de homotop\'ia $[\alpha] \in \pi_1 S$, entonces existe un cubriente finito $\hat S \rightarrow S$ donde $\alpha$ se levanta a un encaje\footnote{Realmente tenemos que imponer una condici\'on extra en $\alpha$ para que esto se cumpla: es necesario que $\alpha$ tenga intersecciones m\'inimas en su clase de homotop\'ia, porque si $\alpha$ no tiene intersecciones m\'inimas entonces hay discos en $S$ formados por segmentos de $\alpha$ que necesariamente se levantan a la cubierta universal de $S$.}. En efecto, la separabilidad de $\langle [\alpha]\rangle$ implica que podemos encontrar un subgrupo de \'indice finito $G' < G$ que contiene a $\langle [\alpha]\rangle$, pero donde los elementos de $\pi_1S$ que corresponden a sublazos de $\alpha$ no sobreviven. 

Uno de las razones por las que los complejos especiales son \'utiles es porque sus grupos fundamentales tienen propiedad similares a las de los grupos libres. En particular en lo que concierne a la separabilidad de sus subgrupos:

\begin{theorem}\label{thm:doublesep}~\cite{WiseCBMS2012} Un complejo cubular npc $X$ es virtualmente especial si y solo si 
\begin{enumerate}
\item $\pi_1H$ es separable para cada hiperplano $H$ de $X$ y
\item la clase lateral doble $\pi_1H\pi_1J$ es separable para cada par de hiperplanos $H,J$ que se cruzan en $X$.
\end{enumerate} 
\end{theorem}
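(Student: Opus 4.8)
The plan is to translate each of the four hyperplane pathologies into the failure of a separability condition, and then to resolve the pathologies one at a time by passing to finite covers. Throughout I would work in the universal cover $\tilde X$, which is CAT(0) and therefore special (by Theorem~\ref{thm:cat0}); thus \emph{every} pathology in $X$ is created by the deck action of $\pi_1 X$. For a hyperplane $H$ of $X$ I fix a lift $\tilde H \subset \tilde X$ and write $G_H = \mathrm{Stab}_{\pi_1 X}(\tilde H)$; since $N(H) \cong H \times [0,1]$ deformation retracts onto $H$, one has $G_H \cong \pi_1 H$. The dictionary I would set up is: $H$ fails to be embedded (resp.\ self-osculates) precisely when some $g \in \pi_1 X \setminus G_H$ makes $g\tilde H$ cross (resp.\ osculate) $\tilde H$; and a crossing pair $H, J$ inter-osculates precisely when some $g$ makes the carriers of $\tilde H$ and $g\tilde J$ touch at a vertex without crossing, a configuration I would encode as membership $g \notin \pi_1 H\,\pi_1 J$ (the crossing configurations being exactly the double coset $\pi_1 H\,\pi_1 J$). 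Assuming, as is standard here, that $X$ is compact, the cocompactness of the $G_H$-action on $\tilde H$ guarantees that each type of bad configuration falls into only finitely many orbits.

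For the direction ``separability $\Rightarrow$ virtually special,'' I would remove the pathologies in turn. Orientability is the easiest: the side-preserving subgroup $\pi_1 H^+ \le \pi_1 H$ has index at most $2$, and a standard finite-cover argument (capturing it inside a finite-index subgroup) makes all hyperplanes two-sided. Embeddedness and no self-osculation: for each of the finitely many offending representatives $g \notin G_H$, separability of $\pi_1 H$ produces a finite-index $K_g < \pi_1 X$ with $G_H < K_g$ but $g \notin K_g$, so that $\tilde H$ and $g\tilde H$ lie over distinct hyperplanes of the cover $\tilde X/K_g$; intersecting over the offending $g$ removes the self-crossings and self-osculations at $H$. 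No inter-osculation: for each crossing pair $H,J$ and each offending $g \notin \pi_1 H\,\pi_1 J$, separability of the double coset $\pi_1 H\,\pi_1 J$ yields a finite-index subgroup excluding $g$ while still containing the double coset, which forces every vertex-contact of lifts of $H$ and $J$ in the cover to be a genuine crossing. Because each cover $\hat X \to X$ is a local isometry, pathologies only project \emph{downward}, so once a condition is achieved in a cover it persists in all further covers (cf.\ Lemma~\ref{lem:projects}); intersecting the finitely many finite-index subgroups produced above gives a single finite cover in which all four conditions hold, i.e.\ a special cover, so $X$ is virtually special.

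For the converse, suppose $X$ is virtually special and choose a finite special cover $\hat X$. By Theorem~\ref{thm:specialvetti} there is a local isometry $\hat X \to R(\Gamma)$ onto the Salvetti complex of a RAAG, so by Theorem~\ref{thm:local} the induced map $\pi_1 \hat X \hookrightarrow G(\Gamma)$ is injective, and each hyperplane subgroup $\pi_1 \hat H$ maps into a \emph{standard} (parabolic) subgroup of $G(\Gamma)$. I would then invoke the known facts that standard subgroups of RAAGs are virtual retracts, hence separable, and that double cosets of standard subgroups are separable in $G(\Gamma)$. Separability is inherited by the subgroup $\pi_1 \hat X < G(\Gamma)$ (the profinite topology of $\pi_1 \hat X$ refines the subspace topology induced from $G(\Gamma)$) and then transferred to the finite-index \emph{super}group $\pi_1 X$; matching hyperplanes of $X$ with those of $\hat X$ finishes the identification of the conditions.

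The main obstacle is the inter-osculation step and, dually, its necessity in the converse: double coset separability is genuinely subtler than subgroup separability, and the crux of the whole argument is the precise claim that ``vertex-contact without crossing'' of lifts of $H$ and $J$ corresponds exactly to falling outside $\pi_1 H\,\pi_1 J$. Making this correspondence rigorous requires a careful analysis of the carriers $N(\tilde H), N(\tilde J)$ and of how $G_H$ and $G_J$ move them in the CAT(0) geometry of $\tilde X$ (using Theorem~\ref{thm:cat0}), and it is exactly this geometric bookkeeping that forces the double coset, rather than a mere subgroup, into the statement. A secondary technical point is keeping track, under the several successive finite covers, of the bijection between hyperplanes upstairs and downstairs, so that the separability hypotheses---stated for hyperplanes of $X$---can be applied to the lifted configurations.
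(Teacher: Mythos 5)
The paper does not actually prove Theorem~\ref{thm:doublesep}: it is stated as a quotation from~\cite{WiseCBMS2012} (the result is due to Haglund--Wise), so your proposal can only be measured against the standard argument. Its architecture you have reproduced correctly: pathologies of $X$ are deck-group phenomena in the CAT(0) cover, compactness gives finitely many orbits of offending configurations, separability removes them in finite covers, and covers create no new pathologies (Lemma~\ref{lem:projects}). You were also right to insert compactness of $X$, which the statement above omits but without which the finiteness of orbits of pathologies fails.

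There are, however, two genuine gaps. First, your parenthetical claim that ``the crossing configurations are exactly the double coset $\pi_1 H\, \pi_1 J$'' is false as written: you fixed the lifts $\widetilde H$ and $\widetilde J$ independently, so they need not cross at all (note $1 \in G_H G_J$ always), and even when they are chosen to cross, the set $\{ g : \widetilde H \text{ crosses } g \widetilde J \}$ is in general a \emph{finite union} of double cosets $G_H g_i G_J$, one for each crossing of $H$ and $J$ in $X$, not a single double coset. What the argument actually needs is weaker and must be set up as such: choose lifts crossing at a lift of an actual crossing point, observe the one inclusion $G_H G_J \subseteq \{ g : \widetilde H \text{ crosses } g \widetilde J\}$, and then invoke conclusion~\eqref{it:cat5} of Theorem~\ref{thm:cat0} (in a CAT(0) cube complex, crossing hyperplanes never osculate) to conclude that each of the finitely many osculating elements $g$ lies \emph{outside} $G_H G_J$, so double-coset separability applies to it.

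Second, the ``inheritance'' step in your converse is broken. If $P_H P_J$ is closed in $G(\Gamma)$, what you get for free is that $(P_H P_J) \cap \pi_1 \widehat X$ is closed in $\pi_1 \widehat X$; but the set that must be closed is $\pi_1 \widehat H\, \pi_1 \widehat J$, and products do not commute with intersections: $(A \cap C)(B \cap C) \subseteq (AB) \cap C$ is in general a proper inclusion. Likewise the identification $\pi_1 \widehat H = P_H \cap \pi_1 \widehat X$ is not automatic. Both equalities require genuine geometry --- convexity of $\widetilde{\widehat X}$ inside the universal cover of the Salvetti complex (Theorem~\ref{thm:local}) together with a Helly-type argument pushing a crossing point into that convex subcomplex --- and this is exactly what Haglund--Wise's canonical completion and retraction accomplishes; it cannot be replaced by point-set remarks about profinite topologies. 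A smaller inaccuracy: two-sidedness needs no separability at all, since the mod-$2$ intersection homomorphisms $\pi_1 X \to \integers/2\integers$ attached to the finitely many hyperplanes already produce the required finite cover, whereas ``capturing $\pi_1 H^{+}$ inside a finite-index subgroup'' is not available as a mechanism because $\pi_1 H^{+}$ is not among the subgroups assumed separable.
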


En el contexto hiperb\'olico, tenemos el siguiente resultado:

\begin{theorem}
Si $X$ es compacto y especial y $\pi_1X$ es hiperb\'olico, entonces todos los subgrupos cuasiconvexos de $\pi_1X$ son separables.
\end{theorem}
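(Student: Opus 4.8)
The plan is to reduce the statement to the residual finiteness of right-angled Artin groups, by realizing the quasiconvex subgroup geometrically and then exhibiting it as a \emph{virtual retract} of a RAAG through a finite cover of a Salvetti complex.

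First I would invoke the hypotheses: since $X$ is compact, $\pi_1 X$ is hyperbolic, and $H < \pi_1 X$ is quasiconvex, Proposition~\ref{cor:loci} produces a local isometry $Y \to X$ with $Y$ compact and $\pi_1 Y = H$. This is the only step that genuinely uses compactness and hyperbolicity; everything afterward concerns special cube complexes and their RAAGs. Because $X$ is special, Theorem~\ref{thm:specialvetti} gives a local isometry $X \to R$ onto a Salvetti complex $R$, with $\pi_1 R \cong G(\Gamma)$ by Theorem~\ref{thm:salvettis}. Composing, I obtain a local isometry $\psi \colon Y \to R$ of compact cube complexes, and by Theorem~\ref{thm:local} every arrow in the chain $H = \pi_1 Y \hookrightarrow \pi_1 X \hookrightarrow G(\Gamma)$ is injective. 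Since separability passes to subgroups---given $g \in \pi_1 X \setminus H$, intersect $\pi_1 X$ with a finite-index subgroup of $G(\Gamma)$ that contains $H$ and omits $g$---it suffices to prove that $H$ is separable in the RAAG $G(\Gamma)$.

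The technical heart is the \emph{canonical completion and retraction} applied to $\psi \colon Y \to R$. Since $R$ has a single vertex and oriented edges labelled by the generators of $G(\Gamma)$, the preimage in $Y$ of each generator-edge is an embedded $1$-manifold: a disjoint union of arcs and circles. I would complete each arc canonically to a circle so that, generator by generator, the resulting $1$-complex becomes a finite cover of $R^{(1)}$, and then fill in squares and higher cubes compatibly to obtain a genuine \emph{finite} cover $\widehat R \to R$ in which $Y$ embeds as a subcomplex. The same recipe defines a retraction $\rho \colon \widehat R \to Y$ collapsing the added material onto $Y$. On fundamental groups this exhibits $H = \pi_1 Y$ as a retract of the finite-index subgroup $\pi_1 \widehat R < G(\Gamma)$, i.e.\ as a virtual retract of $G(\Gamma)$.

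To finish, I would combine two standard facts: right-angled Artin groups are residually finite (indeed linear), and in any residually finite group a virtual retract is separable. For the latter, inside the residually finite group $\pi_1\widehat R$ the map $g \mapsto g^{-1}\rho_*(g)$ is continuous for the profinite topology and cuts out $H$ as the preimage of the closed point $1$, so $H$ is separable in $\pi_1\widehat R$; as $[G(\Gamma):\pi_1\widehat R] < \infty$, this promotes to separability of $H$ in $G(\Gamma)$, and hence in $\pi_1 X$. I expect the whole difficulty to sit in the third paragraph: arranging the completion to be an honest finite cover and checking that the retraction defined on $1$-skeleta extends cellularly over all higher cubes of $\widehat R$. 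This is exactly where the specialness of $R$---no self-osculation and no inter-osculation, inherited through the local isometry---is indispensable, and it is the content of the Haglund--Wise machinery that this expository account otherwise black-boxes.
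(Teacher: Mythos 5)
The paper states this theorem without any proof whatsoever --- it is one of the results the exposition deliberately black-boxes --- so there is no in-text argument to measure you against; the comparison is with the standard proof in the literature, which is exactly what you have reconstructed. Your chain of reductions is correct and uses the paper's own ingredients as intended: Proposition~\ref{cor:loci} converts quasiconvexity (plus compactness of $X$ and hyperbolicity of $\pi_1X$) into a compact local isometry $Y \to X$ with $\pi_1 Y = H$; Theorem~\ref{thm:specialvetti} composes this into a local isometry $Y \to R$ with $R$ a Salvetti complex, which is compact because the compact $X$ has finitely many hyperplanes; Theorem~\ref{thm:local} makes every map in $H \hookrightarrow \pi_1 X \hookrightarrow G(\Gamma)$ injective; and the closing formal steps --- separability of $H$ in the ambient $G(\Gamma)$ restricts to separability in the intermediate subgroup $\pi_1 X$, retracts of residually finite groups are closed in the profinite topology via $g \mapsto g^{-1}\rho_*(g)$, finite-index promotion, and residual finiteness (indeed linearity) of right-angled Artin groups --- are all standard and correctly deployed. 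The one step you do not actually prove is the one you yourself identify as the heart: that the generator-by-generator completion of $Y \to R$ genuinely closes up into a finite cover $\widehat R \to R$ containing $Y$ as a subcomplex (in particular that the commutator squares of $R$ lift coherently to the completed $1$-skeleton), and that the fold-back map extends over all cubes to a retraction $\widehat R \to Y$; this is the canonical completion and retraction of Haglund--Wise~\cite{HaglundWiseSpecial}, also treated in~\cite{WiseCBMS2012}, and it is a substantive theorem rather than a routine verification --- for instance, your description of the retraction as ``collapsing the added material'' glosses over the fact that each completion edge must be sent back over its arc, which is what makes the extension over squares a real check. Black-boxing that step is entirely reasonable here, since the paper black-boxes the whole theorem; with that citation in place, your proposal is the correct and, as far as I know, the only standard route to this result.
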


En la introducci\'on de este art\'iculo hicimos aluci\'on a la \emph{conjetura virtual de Haken}. Una $3$-variedad (digamos compacta, orientable e irreducible) es \emph{Haken} si contiene una superficie encajada, orientable e incompresible. No todas las $3$-variedades  con las propiedades antes mencionadas son Haken, pero la conjetura virtual de Haken plantea la posibilidad de que todas tengan cubrientes de grado finito que s\'i lo sean. Hasta hace algunos a\~nos, este era uno de los problemas abiertos m\'as importantes en topolog\'ia de dimensiones bajas. El lector podr\'a apreciar que los resultados mencionados en esta secci\'on (el teorema~\ref{thm:freesep} y el teorema~\ref{teo:scott}) son an\'alogos a esta situaci\'on en dimensiones $1$ y $2$.

La conjetura virtual de Haken fue resuelta por Ian Agol~\cite{AgolGrovesManning2012}, bas\'andose fuertemente en los resultados de Dani Wise y de sus colaboradores~\cite{BergeronWiseBoundary, HaglundWiseAmalgams, HsuWiseCubulatingMalnormal}: en particular, muchas de las nociones y resultados expuestos en este art\'iculo, y otros m\'as que est\'an  mucho m\'as all\'a de los objetivos de este texto, como la \emph{teor\'ia cubular de cancelaciones peque\~nas} y el \emph{teorema del cociente malnormal especial}~\cite{WiseIsraelHierarchy}.

\subsubsection{¿Qu\'e tan com\'un es ser especial?}

A estas alturas, quiz\'a se estar\'an preguntando si no ser\'a que \emph{todos} los complejos cubulares npc son virtualmente especiales, o, como m\'inimo, si no ser\'a que todos los grupos cubulables son virtualmente especiales. En el caso de los grupos, si asumimos adem\'as hiperbolicidad, esto se sigue de un teorema tremendo de Agol~\cite{AgolGrovesManning2012}:

\begin{theorem}
Si $G$ es hiperb\'olico y cocompactamente cubulado, entonces $G$ es virtualmente especial.
\end{theorem}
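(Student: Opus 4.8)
Este es el c\'elebre teorema de Agol, cuya demostraci\'on completa est\'a muy por encima de los objetivos de este texto; nos limitamos a describir la estrategia. El plan es reducir la afirmaci\'on al criterio de separabilidad del teorema~\ref{thm:doublesep}. Podemos suponer, pasando a un subgrupo de \'indice finito si es necesario, que $G$ no tiene torsi\'on, de modo que $X=G\backslash\widetilde X$ es un complejo cubular compacto npc con $\pi_1X\cong G$. Por el teorema~\ref{thm:doublesep}, para concluir que $X$ es virtualmente especial basta demostrar que $\pi_1H$ es separable para cada hiperplano $H$ de $X$ y que la clase lateral doble $\pi_1H\,\pi_1J$ es separable para cada par de hiperplanos $H,J$ que se cruzan.

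El primer paso es notar que estos subgrupos son cuasiconvexos. En efecto, por el teorema~\ref{thm:cat0}\eqref{it:cat3} el soporte de cada hiperplano de $\widetilde X$ es un subcomplejo convexo, y como la acci\'on es cocompacta, los estabilizadores de los hiperplanos (y sus intersecciones, que gobiernan los cruces) son subgrupos cuasiconvexos de $G$. La separabilidad de las clases laterales dobles de subgrupos cuasiconvexos en un grupo hiperb\'olico se deduce, por resultados est\'andar, de la separabilidad de los subgrupos cuasiconvexos mismos. As\'i, todo el problema se reduce a demostrar que \textbf{todo subgrupo cuasiconvexo de $G$ es separable}.

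Aqu\'i entra la maquinaria pesada. La herramienta central es el \emph{teorema del cociente malnormal especial} de Wise~\cite{WiseIsraelHierarchy}: partiendo de un complejo cubular compacto virtualmente especial con grupo fundamental hiperb\'olico y de una colecci\'on casi malnormal de subgrupos cuasiconvexos, produce subgrupos de \'indice finito de \'estos cuyo cociente por la clausura normal correspondiente -- un ``relleno de Dehn cubular'' -- vuelve a ser hiperb\'olico y virtualmente especial. El aporte decisivo de Agol es un argumento combinatorio de \emph{coloraciones} (o de ``separaci\'on d\'ebil''): usando el teorema del cociente malnormal especial para pasar a cocientes virtualmente especiales en los que los subgrupos relevantes se vuelven separables, y ensamblando estas separaciones locales de manera equivariante, se obtiene la separabilidad de los subgrupos cuasiconvexos de $G$. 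Con esto, la conclusi\'on se sigue del teorema~\ref{thm:doublesep}.

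El obst\'aculo principal es doble. Por un lado, el teorema del cociente malnormal especial es en s\'i mismo un resultado extremadamente profundo, cuya demostraci\'on descansa en la teor\'ia cubular de cancelaciones peque\~nas y ocupa una monograf\'ia entera. Por otro lado -- y \'este es el coraz\'on genuinamente nuevo del teorema -- el argumento de coloraci\'on de Agol debe sortear la aparente circularidad de que ``ser especial implica separabilidad cuasiconvexa'': el relleno de Dehn cubular permite arrancar desde piezas que ya se saben virtualmente especiales y propagar la separabilidad a todo $G$, pero organizar esta propagaci\'on en una \'unica coloraci\'on finita es precisamente la parte dif\'icil.
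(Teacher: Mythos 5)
You should first note that the paper itself offers no proof of this statement: it is presented as ``un teorema tremendo de Agol'' with a citation to~\cite{AgolGrovesManning2012}, so there is no argument in the text to compare yours against, and your decision to give only a strategy sketch is consistent with the paper's treatment. As a sketch, your outline --- reduce to Wise's separability criterion (teorema~\ref{thm:doublesep}), with the Malnormal Special Quotient Theorem of~\cite{WiseIsraelHierarchy} and Agol's coloring/weak-separation argument as the engine --- is a recognizable summary of how the actual proof is organized in the literature.

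There is, however, one step that is genuinely circular and would have to be removed. You begin with ``podemos suponer, pasando a un subgrupo de \'indice finito si es necesario, que $G$ no tiene torsi\'on''. This is not an available reduction: it is a well-known open problem whether hyperbolic groups are virtually torsion-free, and for hyperbolic cocompactly cubulated groups virtual torsion-freeness is precisely one of the \emph{consequences} of Agol's theorem (virtually special implies, up to finite index, being a subgroup of a gaar, hence torsion-free), so it cannot be assumed at the outset. Agol's argument instead works equivariantly with the $G$-action on the CAT(0) cube complex and produces a finite-index subgroup whose action is free and whose quotient is special; freeness is part of the conclusion, not a hypothesis. A second, smaller inaccuracy: the logical order you propose (first establish that \emph{every} quasiconvex subgroup of $G$ is separable, then apply teorema~\ref{thm:doublesep}) inverts the actual flow of the proof. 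Separability of all quasiconvex subgroups of such $G$ is deduced \emph{from} virtual specialness via Haglund--Wise~\cite{HaglundWiseSpecial}, whereas Agol constructs the special finite cover directly by coloring hyperplanes, needing only the finitely many hyperplane stabilizers and their double cosets; moreover, the passage from separability of quasiconvex subgroups to separability of their double cosets, which you call ``resultados est\'andar'', is a nontrivial theorem of Minasyan rather than a formal consequence.
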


De esto se sigue, por ejemplo, que:

\begin{enumerate}
\item  Los grupos que satisfacen las condicion $C'(\frac{1}{6})$ de cancelaciones peque\~nas son virtualmente especiales~\cite{WiseSmallCanCube04}.
\item  Los grupos de Coxeter hiperb\'olicos son virtualmente especiales, puesto que tienen subgrupos de \'indice finito que est\'an cocompactamente cubulados (de hecho esto es cierto para \textbf{todos} los grupos de Coxeter, hiperb\'olicos o no~\cite{HaglundWiseCoxeter}).
\item  Los grupos fundamentales de $3$-variedades hiperb\'olicas cerradas son virtualmente especiales (hiperbolicidad en el sentido geom\'etrico implica hiperbolicidad para los grupos fundamentales por el teorema de Milnor-Svarc).
\end{enumerate}

Fuera del contexto hiperb\'olico esto ya no se cumple: hay un ejemplo de Wise~\cite{WiseCSC} de un complejo cubular npc que no es especial y que no tiene cubrientes de grado finito, y otros ejemplos con propiedades similares de Burger-Moses~\cite{BurgerMozes2000} y de Wise~\cite{WiseCSC}. 

\begin{figure}[h!]
\centerline{\includegraphics[scale=0.52]{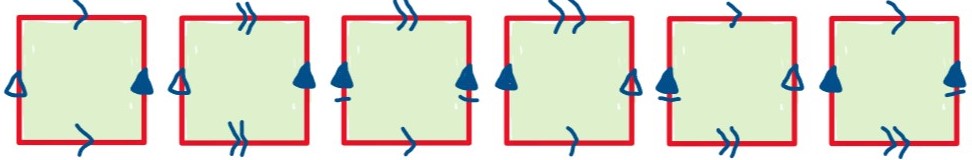}}
\caption{Un complejo cubular npc cuyo grupo fundamental no tiene subgrupos de \'indice finito se construye a partir de este complejo. Este complejo no es especial pues los hiperplanos se autoosculan.}
\label{fig:nonresf}
\end{figure}

\newpage

\section{Ejercicios}

\begin{enumerate}

\item \label{ejer:1} Sea $S$ una superficie cerrada con caracter\'istica de Euler $\leq 0$. Vimos en el art\'iculo como dar a $S$ una estructura de complejo cubular no positivamente curvado. Hay muchas maneras de hacer esto. Encuentra otras maneras de cuadricular a $S$ para obtener un complejo cubular no positivamente curvado. 

\item Demostrar que la subdivisi\'on baric\'entrica de un complejo cubular npc es de nuevo un complejo cubular npc.

\item Demostrar que si $A,B$ son complejos cubulares no positivamente curvados, entonces $A\times B$ tambi\'en lo es. (Hint: Demostrar que $link((a,b))$ es la \emph{junta} de  $link(a)$ y $link(b)$. La junta de dos espacios topol\'ogicos $A$ y $B$ es el espacio que se obtiene al tomar la union disjunta de los dos espacios y a\~nadir un segmento conectando cada par de puntos $(a,b)$ donde  $a \in A$ y  $b \in B$.)

\item Demostrar que un complejo cubular simplemente conexo cuyas aureolas son gr\'aficas bipartitas completas es el producto de dos \'arboles.

\item Si $\Gamma$ es a una gr\'afica $n$-partita completa, ¿qu\'e puedes decir sobre la estructura de $G(\Gamma)$?

\item Identificar los gaars asociados a las siguientes gr\'aficas y describir los complejos de Salvetti correspondientes. Para cada uno de estos complejos de Salvetti, describir las aureolas de sus v\'ertices.
\begin{figure}[h!]
\centerline{\includegraphics[scale=0.45]{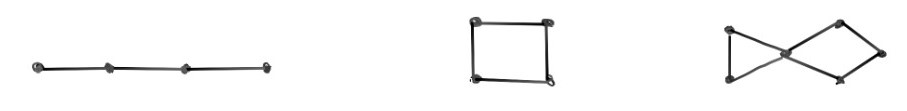}}
\label{fig:exsraags}
\end{figure}

\item En el Ejercicio~\eqref{ejer:1}, ¿puedes cuadricular a $S$ de tal forma que haya un solo hiperplano? En t\'erminos del g\'enero de $S$, ¿cu\'antos cuadrados son necesarios?

\item Demostrar que si $X$ es un complejo de Salvetti, entonces cada hiperplano de $X$ tambi\'en es un complejo de Salvetti. 

\item Construir un complejo cubular no positivamente curvado $X$  con $dim(X)  \geq 2$ y con un solo hiperplano. Si $dim(X) =2$, ¿cu\'al es el mínimo n\'umero de cuadrados necesarios? Si $dim(X)= 3$, ¿cu\'al es el mínimo n\'umero de cubos?

\item Construir un complejo cubular no positivamente curvado cuyos hiperplanos sean todos isomorfos a: (a) un c\'irculo, (b) un bouquet de dos c\'irculos, (c) la cu\~na de un toro con un c\'irculo.

\item Demostrar que la inclusi\'on de un cubo $c \rightarrow X$ en un complejo cubular npc $X$ es una isometr\'ia local.

\item Demostrar que, despu\'es de subdivir $X$, el mapeo $H \hookrightarrow X$ de un hiperplano $H$ a $X$ es una isometr\'ia local (Hint: seguir la demostrac\'ion del lema~\ref{lem:isosop}).

\item Demostrar que la intersecci\'on de subcomplejos convexos es convexa.

\item Sea $S$ una superficie cerrada y con $\chi(S)< 0$. Demostrar que existe un subgrupo libre $F_2 < \pi_1(S)$ usando resultados vistos en este art\'iculo.

\item Usando los resultados vistos en este art\'iculo, demostrar que si $G(\Gamma)$ es un gaar y $\Gamma'$ es una subgr\'afica plena de $\Gamma$, entonces $G(\Gamma') < G(\Gamma)$.

\item Dibujar la gr\'afica de Cayley de $\langle a | a^n \rangle$ donde $n \in \naturals$ con respecto al generador dado, la gr\'afica de Cayley de $\integers^2$ con respecto a $\{(1,0),(0,1)\}$, y las gr\'aficas de Cayley de las presentaciones dadas en el ejemplo~\ref{ex:surface}.

\item Demostrar que existe una $\delta \geq 0$ para la cual todos los tri\'angulos geod\'esicos en el espacio hiperb\'olico $\mathbb{H}^n$ son $\delta$-finos en el sentido de la definici\'on~\ref{def:finetriangle}. Demostrar que este no es el case en el espacio euclideano $\mathbb{E}^n$. 

\item Para cada $n \in \naturals$ y $n'>1$, dar ejemplos de presentaciones que satisfagan la condici\'on $C(n)$ de cancelaciones peque\~nas pero no la condici\'on $C'(\frac{1}{n'})$ de cancelaciones peque\~nas.

\item Determinar el complejo cubular dual a los siguientes espacios de paredes, y su cociente bajo la acci\'on por traslaciones de $\integers$:

\begin{figure}[h!]
\centerline{\includegraphics[scale=0.5]{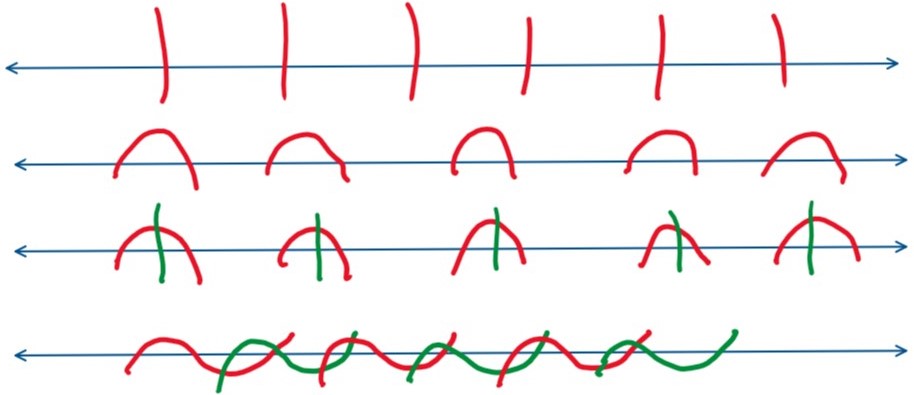}}
\label{fig:paredesr}
\end{figure}

\item Determinar el complejo cubular dual al espacio de paredes dado por una colecci\'on de $n$-l\'ineas donde cada par de l\'ineas se cruza y por sus trasladados enteros en $\mathbb{E}^2$ (ver la figura~\ref{fig:nlines}).

\begin{figure}[h!]
\centerline{\includegraphics[scale=0.55]{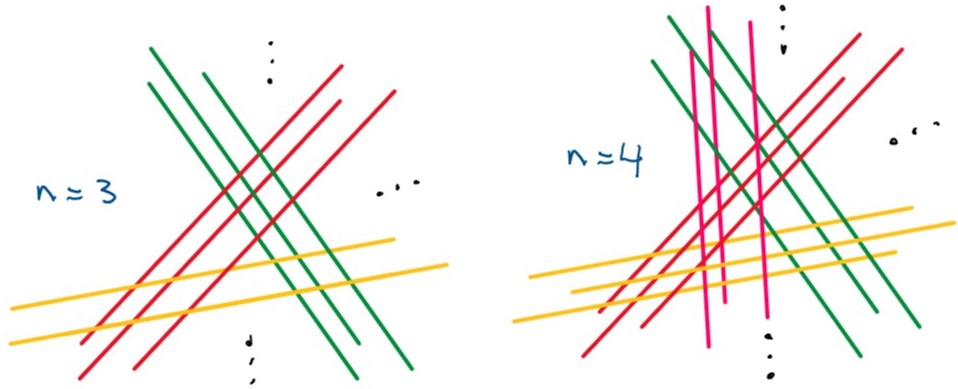}}
\caption{Colecciones de 3 y 4 l\'ineas donde cada par de l\'ineas se cruza, y algunos de sus trasladados.}
\label{fig:nlines}
\end{figure}

\item Sea $(X, \mathcal{W})$ un espacio de paredes y $\mathcal{C}$ su complejo cubular dual. Demostar que un cubo maximal de dimensi\'on $n$ en $\mathcal{C}$ corresponde a una colecci\'on maximal de $n$ paredes que se cruzan todas por pares.  

\item Para cualquier colecci\'on de geod\'esicas cerradas en una superficie $S$, sus levantamientos a la cubierta universal $\widetilde S$ (el plano hiperb\'olico, si $S$ es cerrada y no es un toro o una botella de Klein) inducen una estructura de espacio de paredes en $\widetilde S$. Como ya mencionamos antes $\pi_1 S$ act\'ua en el complejo dual $C$. ¿Cu\'ando es la acci\'on propia? ¿Cu\'ando es cocompacta? ¿Cu\'ando es el cociente de $C$ bajo la acci\'on homeomorfo a $S$? 

\item Completar la demostraci\'on del lema~\ref{lem:projects}.

\item Demostrar que las superficies cerradas con caracter\'istica de Euler $\leq 0$ son homeomorfas a complejos virtualmente especiales. ¿Cu\'al es el g\'enero m\'inimo necesario para que una superficie sea homeomorfa a un complejo especial? 

\item Encontrar una cubierta especial del siguiente complejo cubular npc.

\begin{figure}[h!]
\centerline{\includegraphics[scale=0.52]{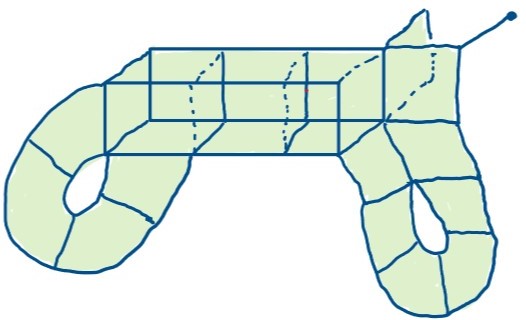}}
\label{fig:espex}
\end{figure}

\item Determinar el n\'umero mínimo de cuadrados necesarios para construir un complejo cubular especial de dimensión 2. Determinar el n\'umero mínimo de cubos necesarios para construir un complejo cubular especial de dimensión 3. ¿C\'omo cambian estos números si solo pedimos que el complejo sea virtualmente especial? 

\item ¿Cu\'ando puede un gaar ser un grupo hiperb\'olico?

\item Reformular la condici\'on de ser virtualmente especial para grupos en t\'erminos de la acci\'on en un complejo cubular CAT(0).

\end{enumerate}

\bibliographystyle{alpha}
\bibliography{wise.bib}

\def\cprime{$'$} \def\polhk#1{\setbox0=\hbox{#1}{\ooalign{\hidewidth
  \lower1.5ex\hbox{`}\hidewidth\crcr\unhbox0}}} \def\cprime{$'$}
  \def\cprime{$'$} \def\polhk#1{\setbox0=\hbox{#1}{\ooalign{\hidewidth
  \lower1.5ex\hbox{`}\hidewidth\crcr\unhbox0}}}
\begin{thebibliography}{BSV14}

\bibitem[Ago08]{Agol08}
Ian Agol.
\newblock Criteria for virtual fibering.
\newblock {\em J. Topol.}, 1(2):269--284, 2008.

\bibitem[Ago13]{AgolGrovesManning2012}
Ian Agol.
\newblock The virtual {H}aken conjecture.
\newblock {\em Doc. Math.}, 18:1045--1087, 2013.
\newblock With an appendix by Agol, Daniel Groves, and Jason Manning.

\bibitem[Are23]{arenas2023asphericity}
Macarena Arenas.
\newblock Asphericity of cubical presentations: the 2-dimensional case, 2023.

\bibitem[BH99]{BridsonHaefliger}
Martin~R. Bridson and Andr{\'e} Haefliger.
\newblock {\em Metric spaces of non-positive curvature}.
\newblock Springer-Verlag, Berlin, 1999.

\bibitem[BM00]{BurgerMozes2000}
Marc Burger and Shahar Mozes.
\newblock Lattices in product of trees.
\newblock {\em Inst. Hautes \'Etudes Sci. Publ. Math.}, (92):151--194 (2001),
  2000.

\bibitem[BSV14]{GGTbook14}
Mladen Bestvina, Michah Sageev, and Karen Vogtmann, editors.
\newblock {\em Geometric group theory}, volume~21 of {\em IAS/Park City
  Mathematics Series}.
\newblock American Mathematical Society, Providence, RI; Institute for Advanced
  Study (IAS), Princeton, NJ, 2014.
\newblock Including lecture notes from the Graduate Summer School held at the
  Park City Mathematics Institute (PCMI), Park City, UT, July 1--21, 2012.

\bibitem[BW12]{BergeronWiseBoundary}
Nicolas Bergeron and Daniel~T. Wise.
\newblock A boundary criterion for cubulation.
\newblock {\em Amer. J. Math.}, 134(3):843--859, 2012.

\bibitem[CN05]{ChatterjiNiblo04}
Indira Chatterji and Graham Niblo.
\newblock From wall spaces to {$\rm CAT(0)$} cube complexes.
\newblock {\em Internat. J. Algebra Comput.}, 15(5-6):875--885, 2005.

\bibitem[Deh87]{MaxDehnBook}
Max Dehn.
\newblock {\em Papers on group theory and topology}.
\newblock Springer-Verlag, New York-Berlin, 1987.
\newblock Translated from the German and with introductions and an appendix by
  John Stillwell, With an appendix by Otto Schreier.

\bibitem[Dro83]{DromsPhD83}
Carl Droms.
\newblock {\em Graph Groups}.
\newblock PhD thesis, Syracuse University, 1983.

\bibitem[Gro87]{Gromov87}
M.~Gromov.
\newblock Hyperbolic groups.
\newblock In {\em Essays in group theory}, volume~8 of {\em Math. Sci. Res.
  Inst. Publ.}, pages 75--263. Springer, New York, 1987.

\bibitem[Hag08]{HaglundGraphProduct}
Fr{\'e}d{\'e}ric Haglund.
\newblock Finite index subgroups of graph products.
\newblock {\em Geom. Dedicata}, 135:167--209, 2008.

\bibitem[HP98]{HaglundPaulin98}
Fr{\'e}d{\'e}ric Haglund and Fr{\'e}d{\'e}ric Paulin.
\newblock Simplicit\'e de groupes d'automorphismes d'espaces \`a courbure
  n\'egative.
\newblock In {\em The Epstein birthday schrift}, pages 181--248 (electronic).
  Geom. Topol., Coventry, 1998.

\bibitem[HW08]{HaglundWiseSpecial}
Fr{\'e}d{\'e}ric Haglund and Daniel~T. Wise.
\newblock Special cube complexes.
\newblock {\em Geom. Funct. Anal.}, 17(5):1 551--1620, 2008.

\bibitem[HW10]{HaglundWiseCoxeter}
Fr{\'e}d{\'e}ric Haglund and Daniel~T. Wise.
\newblock Coxeter groups are virtually special.
\newblock {\em Adv. Math.}, 224(5):1890--1903, 2010.

\bibitem[HW12]{HaglundWiseAmalgams}
Fr{\'e}d{\'e}ric Haglund and Daniel~T. Wise.
\newblock A combination theorem for special cube complexes.
\newblock {\em Ann. of Math. (2)}, 176(3):1427--1482, 2012.

\bibitem[HW15]{HsuWiseCubulatingMalnormal}
Tim Hsu and Daniel~T. Wise.
\newblock Cubulating malnormal amalgams.
\newblock {\em Invent. Math.}, 199:293--331, 2015.

\bibitem[NR98]{NibloRoller98}
Graham~A. Niblo and Martin~A. Roller.
\newblock Groups acting on cubes and {K}azhdan's property ({T}).
\newblock {\em Proc. Amer. Math. Soc.}, 126(3):693--699, 1998.

\bibitem[Sag95]{Sageev95}
Michah Sageev.
\newblock Ends of group pairs and non-positively curved cube complexes.
\newblock {\em Proc. London Math. Soc. (3)}, 71(3):585--617, 1995.

\bibitem[Sag97]{Sageev97}
Michah Sageev.
\newblock Codimension-$1$ subgroups and splittings of groups.
\newblock {\em J. Algebra}, 189(2):377--389, 1997.

\bibitem[Sco78]{Scott78}
Peter Scott.
\newblock Subgroups of surface groups are almost geometric.
\newblock {\em J. London Math. Soc. (2)}, 17(3):555--565, 1978.

\bibitem[Wis]{WiseIsraelHierarchy}
Daniel~T. Wise.
\newblock {\em The structure of groups with a quasiconvex hierarchy}.
\newblock Annals of Mathematics Studies, to appear.

\bibitem[Wis04]{WiseSmallCanCube04}
Daniel~T. Wise.
\newblock Cubulating small cancellation groups.
\newblock {\em GAFA, Geom. Funct. Anal.}, 14(1):150--214, 2004.

\bibitem[Wis07]{WiseCSC}
Daniel~T. Wise.
\newblock Complete square complexes.
\newblock {\em Comment. Math. Helv.}, 82(4):683--724, 2007.

\bibitem[Wis12]{WiseCBMS2012}
Daniel~T. Wise.
\newblock {\em From riches to raags: 3-manifolds, right-angled {A}rtin groups,
  and cubical geometry}, volume 117 of {\em CBMS Regional Conference Series in
  Mathematics}.
\newblock Published for the Conference Board of the Mathematical Sciences,
  Washington, DC, 2012.

\end{thebibliography}

\end{document}